\newtheorem{theorem}{Theorem}
\newtheorem{claim}{Claim}
\newtheorem{observation}{Observation}
\newtheorem{example}{Example}
\newtheorem{assumption}{Assumption}
\def\conv{\textup{conv}}
\definecolor{darkgreen}{RGB}{0, 128, 0}
\definecolor{orange}{RGB}{252, 138, 0}
\title{Aggregation of Bilinear Bipartite Equality Constraints and its Application to Structural Model Updating Problem}
\date{}
\author[1]{Santanu S. Dey\thanks{\href{mailto:santanu.dey@isye.gatech.edu}{santanu.dey@isye.gatech.edu}}}
\author[2]{Dahye Han\thanks{\href{mailto:dahye.han@gatech.edu}{dahye.han@gatech.edu}}}
\author[3]{Yang Wang\thanks{\href{mailto:yang.wang@ce.gatech.edu}{yang.wang@ce.gatech.edu}}}
\affil[1,2]{School of Industrial and Systems Engineering, Georgia Institute of Technology}
\affil[3]{School of Civil and Environmental Engineering, Georgia Institute of Technology}
\begin{document}
\maketitle

\begin{abstract}
In this paper, we study the strength of convex relaxations obtained by convexification of aggregation of constraints for a set $S$ described by two bilinear bipartite equalities. 
\textit{Aggregation} is the process of rescaling the original constraints by scalar weights and adding the scaled constraints together. 
It is natural to study the aggregation technique as it yields a single bilinear bipartite equality whose convex hull is already understood from previous literature. 
On the theoretical side, we present sufficient conditions when $\conv(S)$ can be described by the intersection of convex hulls of a finite number of aggregations, examples when $\conv(S)$ can only be obtained as the intersection of the convex hull of an infinite number of aggregations, and examples when $\conv(S)$ cannot be achieved exactly from the process of aggregation. Computationally, we explore different methods to derive aggregation weights in order to obtain tight convex relaxations. We show that even if an exact convex hull may not be achieved using aggregations, including the convex hull of an aggregation often significantly tightens the outer approximation of $\conv(S)$. 
Finally, we apply the aggregation method to obtain convex relaxation for the structural model updating problem and show that this yields better bounds within a branch-and-bound tree as compared to not using aggregations.
\end{abstract}

\section{Introduction}
In this paper, we are interested in finding {good} convex relaxations of a bounded set with two bilinear bipartite equality constraints. The general form of a set with two bilinear bipartite constraints and bounded variables is:
\begin{equation}
\begin{aligned}
\label{eq:bb_set}
    S := \left\{ x \in [0,1]^{n_1}, y \in [0,1]^{n_2} \; \left| \; x^\top Q_k y + a_k^\top x + b_k^\top y + c_k = 0, \; k \in \{1,2\}\right.\right\},
\end{aligned}    
\end{equation}
where $Q_k \in \mathbb{R}^{n_1 \times n_2}, a_k \in \mathbb{R}^{n_1}, b_k \in \mathbb{R}^{n_2}, c_k \in \mathbb{R}$ for all $k \in \{1, 2\}.$
The term bilinear refers to the fact that the constraints are described using quadratic functions where all the degree two terms involve products of two distinct variables.
The set \eqref{eq:bb_set} is referred to as bipartite because variables can be divided into two groups, $x$ and $y$, where no degree two terms include variables from the same group. 
Note that any problem with different variable bounds can always be scaled so that the variable bounds are $[0,1]$, and any inequality constraint can be converted to an equality constraint by adding a slack variable, which is also bounded since other variables are bounded.
While the set $S$ is quite general, we will apply our results to an important application from the structural engineering called the finite element model (FEM)  updating problem~\cite{otsuki2021finite} where finding good convex relaxation of $S$ can improve the performance of the branch-and-bound process.
\subsection{Aggregations}
In the papers \cite{dey2019new,santana2020convex}, the authors show that for a set described by a single bilinear bipartite equality constraint with bounded variables, the convex hull can be obtained using a disjunctive procedure yielding a second-order cone representable set. In particular, the convexification process of a bilinear bipartite equality constraint involves finding convex hulls of sets obtained by fixing all but one $x$ variable and one $y$ variable to their bounds, and then taking the convex hull of all these resulting convex sets.

Since we understand how to find a convex hull of a bounded set with a one-row bilinear bipartite equality constraint, 
in order to obtain good convex relaxations for (\ref{eq:bb_set}),
a natural approach is to consider aggregation of the two constraints to obtain a one-row relaxation and then convexifying it. Formally, an aggregation takes the following form,
\begin{eqnarray*}
S_{\lambda} := \left\{ x\in [0,1]^{n_1}, y\in [0,1]^{n_2} \left | \,  \begin{array}{rcl} 
    \vspace{0.2cm}\lambda_1 \cdot \left( x^\top Q_1 y + a_1^\top x + b_1^\top y + c_1\right) \\ 
    + \; \lambda_2 \cdot \left( x^\top Q_2 y + a_2^\top x + b_2^\top y + c_2 \right) &=& 0
\end{array} \right. \right\},
\end{eqnarray*}
where $\lambda = (\lambda_1, \lambda_2) \in \mathbb{R}^2$. 
Now, $S_\lambda$ is a bounded set with a single bilinear bipartite equality constraint. We use $\conv(S)$ to denote the convex hull of set $S$.

In this paper, the main object of study is the set, 
\begin{eqnarray}
\label{eq:aggregation}
    \bigcap_{\lambda \in T} \conv(S_{\lambda}),
\end{eqnarray} 
which is referred to as the \emph{aggregation closure} of $S$ over $T \subseteq \mathbb{R}^2$.
We note that since the aggregation closure is the intersection of convex sets, it is also convex. Moreover, since $S \subseteq S_\lambda$ for any $\lambda \in \mathbb{R}^2$, we obtain that $\conv(S) \subseteq \conv(S_\lambda)$, thus implying that $\conv(S) \subseteq \bigcap_{\lambda \in T} \conv(S_\lambda)$ for any $T \subseteq \mathbb{R}^2$. 
For the rest of the paper, we wish to evaluate the strength of aggregation closure (\ref{eq:aggregation}) of $S$, both theoretically and computationally, to see how it can give a good relaxation for (\ref{eq:bb_set}), which is a key element in providing dual bounds for globally solving non-convex problems within a branch-and-bound tree framework. 

The idea of finding a convex hull or valid inequalities for the convex hull of an aggregation has been successfully used in integer programming. For example, the classical paper~\cite{marchand2001aggregation} studies how to aggregate inequalities before finding mixed integer rounding inequalities from the aggregated constraint, and the Gomory mixed-integer cuts can also be viewed as generated from an aggregation of the original constraints of a MILP~\cite{cornuejols2001elementary}. The paper~\cite{bodur2018aggregation} studies the strength of aggregation closure for packing and covering mixed integer linear sets. See~\cite{dey2018theoretical} for a discussion on several classes of cutting-planes developed using aggregations in mixed integer linear programming.

A closely related idea to the aggregation closure is the idea of directly using a relaxation $S_{\lambda}$ for some $\lambda$, with the underlying motivation being that $S_{\lambda}$ is a much simpler non-convex set to optimize over rather than the original set. 
This relaxation is referred to as the \textit{surrogate} relaxation. 
The concept of surrogate constraint was first introduced by \cite{glover1965multiphase} in the context of the 0-1 integer programming. The authors of \cite{balas1967discrete} and \cite{geoffrion1969improved} showed how this constraint can give a strong relaxation under certain conditions. Later, authors of \cite{greenberg1970surrogate} generalized the result beyond the 0-1 integer programming. 
Recently, an extensive computational evaluation of the surrogate dual for general mixed integer nonlinear programs was conducted in \cite{muller2022generalized}.

Since aggregating with different weights provides different relaxations that outer approximate the given set, finding a ``good'' aggregation is essential. Particularly, the question of whether an intersection of aggregated constraints can represent a convex hull has been studied for quadratic inequalities. The paper \cite{yildiran2009convex} showed that the convex hull of two quadratic inequalities is given by at most two aggregated inequalities. The paper \cite{dey2022obtaining} expands these results to the case of three quadratic inequalities. Finally, \cite{blekherman2024aggregations} presents more general sufficient conditions when aggregations can yield a convex hull of a set defined by quadratically constrained inequalities, and shows specific conditions when finitely many aggregations would suffice to generate the convex hull.  We note here that all these results are for a direct intersection of aggregations without taking convex hulls, and they do not hold when bounds are added to the variables. 
Therefore, these results are not directly applicable to the case of aggregation closure (\ref{eq:aggregation}) for the bounded set $S$.

\subsection{Contributions of this paper}
The main contributions of this paper are driven by the following questions:
\begin{enumerate}
    \item Can $\conv(S)$ be represented with an intersection of \textit{finite} number of $\conv(S_\lambda)$? We show that for a special case when $n_1 = n_2 = 1$ (that is, $x$ and $y$ are one-dimensional), $\conv(S)$ can be obtained with the intersection of the convex hull of at most three aggregated constraints. However, for $n_1 + n_2 \geq 3$, we show a counterexample where an infinite number of aggregations, that is the full strength of the aggregation closure with all the values of $\lambda$s ($T = \mathbb{R}^2$), are required to obtain the convex hull. 
    \item We then ask if the above result can be generalized, so that the intersection of possibly \textit{infinitely} many aggregations always provides the convex hull. We answer this question in the negative, by showing another counterexample with $n_1+n_2=3$ variables where even an intersection of infinitely many aggregations does not result in the convex hull of the original set.
    \item If aggregations cannot produce an exact convex hull, can it still be useful to tighten the feasible region? We explore different aggregation techniques and experiment on randomly generated bilinear instances as well as instances from real applications. We show that indeed aggregations can still be powerful in providing much tighter convex relaxation. When used to build convex relaxations within a branch-and-bound tree, the aggregation approach leads to improved dual bounds for the finite element model (FEM) updating problem.
\end{enumerate}

\paragraph{Notation and organization of the paper} 
Given a positive integer $n$, we let $[n]:=\{1,2,\ldots,n\}$.
For a countable set $T$, we use $|T|$ to denote the cardinality of the set.
The rest of the paper is organized as follows. In Section~\ref{sec:theoretical_evaluation} we present a list of results that serve as a theoretical evaluation of the aggregation procedure, followed by Section~\ref{sec:computation_evaluation} where we present a computational evaluation of the aggregation procedure.  This section includes experiments to evaluate how to find aggregation weights for the constraints in practice, and also the application of the resulting method to the FEM update problem. Section~\ref{sec:conclusion} presents our conclusions. Section~\ref{sec:proof_n1=n2=1} to Section~\ref{sec:agg_no_work_proof} provides proofs of the results presented in Section \ref{sec:theoretical_evaluation}.

\section{Theoretical evaluation of aggregation}
\label{sec:theoretical_evaluation}
Throughout this section, we make two assumptions about the set $S$ in \eqref{eq:bb_set}.
\begin{assumption}
\label{assumption:1}
$S$ is a nonempty set.
\end{assumption}
\begin{assumption}
\label{assumption:2}
    Two constraints are independent of each other so that there exists no $\lambda \in \mathbb{R}$ such that $x^T Q_1 y + a_1^\top x + b_1^\top y + c_1 = \lambda (x^T Q_2 y + a_2^\top x + b_2^\top y + c_2)$.
\end{assumption}
If $S$ does not satisfy Assumption~\ref{assumption:2}, then only a single bilinear bipartite equality constraint is necessary to describe the set instead of two. 

Our first main result is the following sufficient condition when the aggregation closure yields the convex hull.
\begin{theorem}
\label{theorem:n1=n2=1}
Consider the set $S$ described in (\ref{eq:bb_set}) with $n_1 = n_2 = 1$. Then there exists $T \subseteq \mathbb{R}^2$ where $|T| \leq 3$ such that:
\begin{align*}
    \conv(S) = \bigcap_{\lambda \in T} \conv(S_\lambda).
\end{align*}
\end{theorem}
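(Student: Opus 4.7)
The plan is to exploit the low dimensionality when $n_1 = n_2 = 1$, reducing the problem to trimming a one-dimensional interval. My first step is to establish $|S| \leq 2$: each equation $F_k(x,y) := q_k xy + a_k x + b_k y + c_k = 0$ can be solved for $y$ in terms of $x$ and substituted into the other, yielding a quadratic in $x$ that is not identically zero under Assumption~\ref{assumption:2}. Combined with Assumption~\ref{assumption:1}, the proof splits into the cases $|S| = 1$ and $|S| = 2$.

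The central construction is to exploit the pencil $F_\lambda := \lambda_1 F_1 + \lambda_2 F_2$. Provided $(q_1, q_2) \neq (0, 0)$, the weight $\lambda^* := (q_2, -q_1)$ makes $F_{\lambda^*}$ purely linear, and Assumption~\ref{assumption:2} guarantees it is nonzero. Hence $S_{\lambda^*}$ is the intersection of $[0,1]^2$ with a line $L$, and $\conv(S_{\lambda^*}) = L \cap [0,1]^2$ is already a segment. Since every point of $S$ lies on $L$, we obtain $\conv(S) \subseteq L \cap [0,1]^2$, reducing the problem to trimming this one-dimensional interval down to $\conv(S)$. The degenerate sub-case $q_1 = q_2 = 0$ is handled immediately by $T = \{(1,0), (0,1)\}$, since both constraints are already linear.

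When $|S| = 2$, write $S = \{p_1, p_2\}$, so $\conv(S) = [p_1, p_2]$ is a subsegment of $L \cap [0,1]^2$ with at most two excess sub-intervals to trim --- one on each side. For each such excess, I would use one of the original constraints, $\lambda \in \{(1,0),(0,1)\}$, relying on the fact that $C_k := \{F_k = 0\} \cap [0,1]^2$ and $L$ belong to the same pencil, so their set-theoretic intersection in $\mathbb{R}^2$ is exactly $S$. Combined with the explicit convex hull description of a one-row bilinear bipartite set from \cite{dey2019new,santana2020convex} --- which shows $\conv(C_k)$ is bounded by $C_k$ and by the chord connecting its endpoints on $\partial [0,1]^2$ --- a direct verification shows that $\conv(C_k) \cap L$ is bounded in one direction by either $p_1$ or $p_2$, thereby trimming the corresponding excess. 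Taking one aggregation for each side of the excess, $T = \{\lambda^*, (1,0), (0,1)\}$ suffices, giving $|T| \leq 3$. The case $|S| = 1$ is analogous: at the unique base point $p$, the line $L$ acts as a supporting line of each $\conv(C_k)$, so $\conv(C_k) \cap L$ contains $p$ as a boundary extreme point on the appropriate side.

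The main obstacle will be an exhaustive case analysis on the relative positions of the points of $S$, the segment $L \cap [0,1]^2$, and the boundary of $[0,1]^2$, as well as the tangential-versus-transversal geometry of the pencil at each base point. Verifying via the explicit convex hull description of \cite{dey2019new,santana2020convex} that the proposed aggregations really cut away all of $(L \cap [0,1]^2) \setminus \conv(S)$ without chopping off any extreme point of $\conv(S)$ will require particular care in degenerate configurations --- where a point of $S$ lies on $\partial [0,1]^2$, where one algebraic base point of the pencil lies outside $[0,1]^2$, or where $C_1$ and $C_2$ are tangent to each other at a point of $S$.
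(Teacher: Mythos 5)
Your overall strategy---pass to the purely linear member $L$ of the pencil via $\lambda^*=(q_2,-q_1)$, note $\conv(S)\subseteq L\cap[0,1]^2$, and then trim the excess of this segment with convex hulls of other pencil members---is exactly the paper's high-level plan. However, the step you treat as ``a direct verification'' is where the claim actually fails: it is \emph{not} true that for a singleton $S=\{p\}$ the two original constraints trim opposite sides of $p$ along $L$. When the pencil has two distinct base points and only one lies in $[0,1]^2$, the line $L$ crosses each curve $C_k$ transversally at $p$, so it is not a supporting line of $\conv(C_k)$; each $\conv(C_k)\cap L$ is a nontrivial segment with endpoint $p$, and both segments can point the \emph{same} way. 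Concretely, take $C_1=\{xy-2.5y+1=0\}$ and $C_2=\{xy-2.5x+1=0\}$. The base points are $(0.5,0.5)$ and $(2,2)$, so $S=\{(0.5,0.5)\}$ and $L=\{y=x\}$. Here $C_1\cap[0,1]^2$ is the convex increasing graph $y=1/(2.5-x)$ with slope $1/4<1$ at $p$, so $\conv(C_1)\cap L$ extends in the $+x$ direction from $p$; and $C_2\cap[0,1]^2$ is the concave increasing graph $y=2.5-1/x$ with slope $4>1$ at $p$, whose convex hull lies \emph{below} the curve, so $\conv(C_2)\cap L$ also extends in the $+x$ direction. One checks directly that $(0.52,0.52)$ lies in $\conv(C_1)\cap\conv(C_2)\cap L\cap[0,1]^2$ but not in $\conv(S)$. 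So $T=\{\lambda^*,(1,0),(0,1)\}$ does not suffice, and your argument has no mechanism for producing the aggregation that cuts the other side.

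This missing mechanism is precisely the content of the paper's proof of the singleton case: after normalizing to $(x-r)y=\tau$ and $mx-y+b=0$, it constructs a \emph{new} hyperbola $S_3$ in the pencil (multiplying the linear constraint by $1+\epsilon-r$ or $-(1+\epsilon)/m$, with $\epsilon$ chosen so the resulting $\tilde\tau\neq 0$ and has the right sign) and proves, via a case analysis on $m$ and on whether $(m+b)(1-r)\gtrless\tau$, that $\conv(S_3)$ meets the line on the side of $p$ opposite to $\conv(S_1)$. You would need to add an analogous construction. A second, smaller gap: your opening claim $|S|\leq 2$ is false under Assumptions~\ref{assumption:1} and~\ref{assumption:2} alone---e.g.\ $(x-0.5)(y-0.3)=0$ and $(x-0.5)(y-0.7)=0$ are non-proportional yet give $S=\{0.5\}\times[0,1]$; this degenerate case is easy (the linear aggregation alone gives $\conv(S)$, as in the paper's $\tau=0$ discussion), but your case split $|S|\in\{1,2\}$ silently excludes it. By contrast, your treatment of the two-point case agrees with the paper's case~\ref{item:tau_notzero_line} and is fine.
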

Our proof of Theorem~\ref{theorem:n1=n2=1} is provided in Section~\ref{sec:proof_n1=n2=1}. The proof considers several cases based on the structure of the set $S$. For example, $\textup{conv}(S)$ can either be a point or a line segment. Within each of these cases, there are several sub-cases based on whether the quadratic constraints define non-degenerate hyperbola or union of lines. Our proof is constructive, that is, it explicitly provides recipes of the three or fewer aggregations needed to obtain the convex hull for each case.   
We provide an example below to further illustrate this result.
\begin{example}
\label{ex:n1=n2=1}
Consider the set below:
\begin{eqnarray}
S = \left\{ x, y \in [0, 1]^2\, \left | \,  \begin{array}{l} (x+0.5)y = 0.5\\ (x-1)(y+1.5) = -1 \end{array} \right. \right\}.
\end{eqnarray}
The only point that satisfies both constraints in the $[0,1]^2$ box is $(x,y)=(0.5,0.5)$; hence, $S$ is a singleton and $\conv(S) = \{(0.5,0.5)\}$.
One convex approximation suggested in \cite{dey2019new} is taking an intersection of the convex hull of each constraint. However, note that this does not yield an exact convex hull of $S$ as illustrated in Figures~\ref{figure:n1=n2=1_a} and \ref{figure:n1=n2=1_b}. Now consider the following aggregation with $\lambda = (1, -1)$:
$$S_{\lambda} := \left\{ x, y \in [0, 1]^2\, \left | \, (xy + 0.5y - 0.5) - (xy  +1.5 x - y - 0.5) = -1.5x + 1.5 y = 0 \right. \right\}.$$
$S_\lambda$ is a straight line passing through $(0.5,0.5)$ 
and 
$$\textup{conv}(S) =\textup{conv}(S_{(1,0)}) \ \cap \ \textup{conv} (S_{(0,1)}) \ \cap \ \textup{conv}(S_{(1,-1)}),$$
as shown in Figure~\ref{figure:n1=n2=1_c}.
\end{example}
\begin{figure}[hbt!]
\centering
\begin{minipage}{0.32\textwidth}
    \centering
    \begin{tikzpicture}[scale=2.5]
    \draw[dashed] (1,0) -- (1,1);
    \draw[dashed] (0,1) -- (1,1);
    \draw[-] (0.25,0.25) -- (0.25,0.75);
    \draw[-] (0.25,0.25) -- (0.75,0.25);
    \draw[-] (0.75,0.25) -- (0.75,0.75);
    \draw[-] (0.25,0.75) -- (0.75,0.75);
    \draw[->] (-0.1,0) -- (1.2,0) node[right] {$x$};
    \draw[->] (0,-0.1) -- (0,1.2) node[above] {$y$};
    \draw[red,domain=0:1,line width=1pt] plot (\x,{0.5/(\x+0.5)});
    \draw[blue,domain=1/3:0.6,line width=1pt] plot (\x,{-1.5-1/(\x-1)});
    \fill (0.5, 0.5) circle[radius=0.8pt];
    \begin{scope}
      \fill[pattern=north east lines,pattern color=red, opacity=0.5, domain=0:1, variable=\x]
        (0,1) plot (\x,{0.5/(\x+0.5)}) -- plot (\x,{-2/3*\x+1}) -- (1,1/3) -- cycle;
    \end{scope}
    \begin{scope}
      \fill[pattern=north west lines,pattern color=blue, opacity=0.5, domain=1/3:0.6, variable=\x]
        (1/3,0) plot (\x,{-1.5-1/(\x-1)}) -- plot (\x,{15/4*\x-5/4}) -- (0.6,1) -- cycle;
    \end{scope}
    \end{tikzpicture}
    \subcaption{\footnotesize Convex hull of each constraint}
    \label{figure:n1=n2=1_a}
\end{minipage}%
\begin{minipage}{0.32\textwidth}
    \centering
    \begin{tikzpicture}[scale=5]
    \draw[-] (0.25,0.25) -- (0.25,0.75);
    \draw[-] (0.25,0.25) -- (0.75,0.25);
    \draw[-] (0.75,0.25) -- (0.75,0.75);
    \draw[-] (0.25,0.75) -- (0.75,0.75);
    \draw[->,opacity=0] (0.2,0.25) -- (0.85,0.25) node[right] {$x$};
    \draw[->,opacity=0] (0.25,0.2) -- (0.25,0.85) node[above] {$y$};
    \draw[red,domain=0.25:0.75,line width=1pt] plot (\x,{0.5/(\x+0.5)});
    \draw[blue,domain=3/7:5/9,line width=1pt] plot (\x,{-1.5-1/(\x-1)});
    \fill (0.5, 0.5) circle[radius=0.6pt];
    \begin{scope}
      \fill[pattern=north east lines,pattern color=red, opacity=0.5, domain=0.25:0.75, variable=\x]
        (0.25,2/3) plot (\x,{0.5/(\x+0.5)}) -- (0.75, 0.4) -- (0.75, 0.5) -- plot (\x,{-2/3*\x+1}) -- (0.375, 0.75) -- (0.25,0.75) -- cycle;
    \end{scope}
    \begin{scope}
      \fill[pattern=north west lines,pattern color=blue, opacity=0.5, domain=0.4:5/9, variable=\x]
        (0.4,0.25) -- (3/7, 0.25) -- plot (\x,{-1.5-1/(\x-1)}) -- (5/9, 0.75) -- (8/15, 0.75) -- plot (\x,{15/4*\x-5/4}) -- cycle;
    \end{scope}
    \end{tikzpicture}
    \subcaption{\footnotesize Zoom-in of Figure~\ref{figure:n1=n2=1_a}}
    \label{figure:n1=n2=1_b}
\end{minipage}%
\begin{minipage}{0.32\textwidth}
    \centering
    \begin{tikzpicture}[scale=2.5]
    \draw[dashed] (1,0) -- (1,1);
    \draw[dashed] (0,1) -- (1,1);
    \draw[->] (-0.1,0) -- (1.2,0) node[right] {$x$};
    \draw[->] (0,-0.1) -- (0,1.2) node[above] {$y$};
    \draw[red,domain=0:1,line width=1pt] plot (\x,{0.5/(\x+0.5)});
    \draw[blue,domain=1/3:0.6,line width=1pt] plot (\x,{-1.5-1/(\x-1)});
    \draw[black,domain=0:1,line width=1pt] plot (\x,{\x});
    \fill (0.5, 0.5) circle[radius=0.8pt];
    \begin{scope}
      \fill[pattern=north east lines,pattern color=red, opacity=0.5, domain=0:1, variable=\x]
        (0,1) plot (\x,{0.5/(\x+0.5)}) -- plot (\x,{-2/3*\x+1}) -- (1,1/3) -- cycle;
    \end{scope}
    \begin{scope}
      \fill[pattern=north west lines,pattern color=blue, opacity=0.5, domain=1/3:0.6, variable=\x]
        (1/3,0) plot (\x,{-1.5-1/(\x-1)}) -- plot (\x,{15/4*\x-5/4}) -- (0.6,1) -- cycle;
    \end{scope}
    \end{tikzpicture}
    \subcaption{\footnotesize $\conv(S)$ achieved}
    \label{figure:n1=n2=1_c}
\end{minipage}%
\caption{Each constraint is represented with red and blue curves in the $[0,1]^2$ box and their respective convex hulls are shaded areas of the same color. The first subfigure shows that the intersection of the convex hull of each constraint does not yield a convex hull of $S$. The second figure zooms in the middle box portion of the first subfigure and shows that there exists a nonempty area where the red-shaded area and the blue-shaded area overlap beyond $\conv(S) = \{(0.5,0.5)\}$. In the last subfigure, we see that adding the aggregation $\textup{conv}(S_{(1,-1)})$ allows us to achieve $\conv(S)$ by the aggregation closure.}
\end{figure}
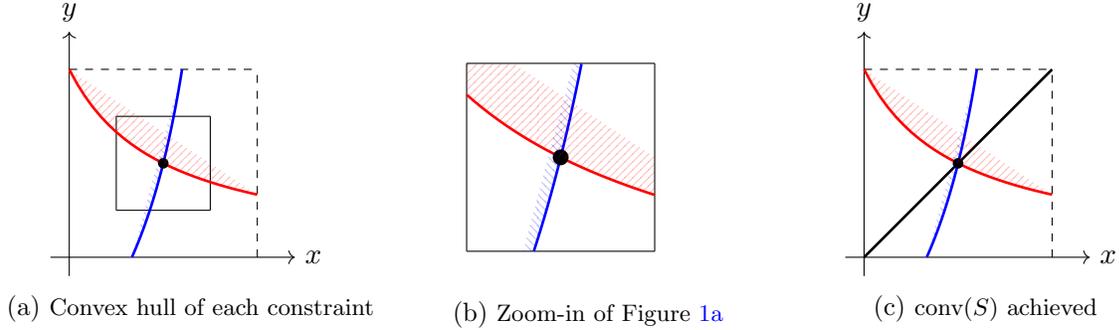

Based on the result of Theorem~\ref{theorem:n1=n2=1}, we might conjecture that the intersection of the convex hull of finitely many aggregated sets may represent the convex hull for a more general case of $n_1$ and $n_2$. However, as soon as we increase the number of bilinear terms by adding just one more variable, we immediately find an example where infinite aggregations are required to yield the convex hull.

\begin{theorem}
\label{theorem:infinite_agg_needed}
There exists an instance of set $S$ as described in (\ref{eq:bb_set}) with $n_1 + n_2 \geq 3$, such that infinite aggregation is needed to obtain the convex hull. In other words, there exists $S$ such that 
$$\textup{conv}(S) = \bigcap_{\lambda \in \mathbb{R}^2} \textup{conv}(S_\lambda),$$ and
$$\textup{conv}(S) \subsetneq \bigcap_{\lambda \in T} \textup{conv}(S_\lambda),$$
where $T \subseteq \mathbb{R}^2$ and $|T| < \infty.$
\end{theorem}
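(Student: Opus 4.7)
The plan is to construct an explicit example with $n_1 + n_2 = 3$ variables and then verify the two claims directly on it. I would try $n_1 = 1$ and $n_2 = 2$, since this is the smallest case permitted by the hypothesis. The goal is to select two bilinear bipartite equalities in $(x, y_1, y_2) \in [0,1]^3$ for which (i) $\conv(S)$ can be written down explicitly and has a strictly curved portion of boundary, and (ii) the aggregations $\conv(S_\lambda)$ are tractable enough that one can see how they collectively touch $\partial \conv(S)$. A natural recipe is to choose the two constraints so that each pins down a relationship between $x$ and one of the $y_i$ (for example, bilinear equations of the form $x y_i = \text{(affine in the other variables)}$), so that $S$ becomes a smooth one-dimensional curve inside the box and $\conv(S)$ is bounded on one side by a portion of a quadric surface.

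To establish $\conv(S) = \bigcap_{\lambda \in \mathbb{R}^2}\conv(S_\lambda)$, I would use the single-row convexification recipe of \cite{dey2019new, santana2020convex}, which describes $\conv(S_\lambda)$ as a second-order-cone representable disjunction obtained by fixing all but one variable on each side of the bipartition to its bounds. The inclusion $\conv(S) \subseteq \bigcap_{\lambda}\conv(S_\lambda)$ is immediate. For the reverse, I would walk through each piece of the boundary of $\conv(S)$ (the linear facets inherited from the $[0,1]^3$ box and the curved quadric face) and, for each such piece, exhibit an explicit $\lambda$ so that $\conv(S_\lambda)$ supports $\conv(S)$ along that piece. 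The collection of valid inequalities obtained this way should then be enough to cut off every point outside $\conv(S)$.

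The main difficulty, and what I expect to be the crux of the proof, is showing that no finite $T$ suffices. My strategy is to isolate the strictly convex portion of $\partial\conv(S)$ and to prove that each individual $\conv(S_\lambda)$ can be tangent to it only along a lower-dimensional subset, typically a single point or an isolated arc. Given any finite $T$, this leaves points on the curved face not kissed by any $\conv(S_\lambda)$ with $\lambda \in T$; at such a point, each $\conv(S_\lambda)$, $\lambda \in T$, strictly contains an outward half-neighborhood, so a small outward perturbation lies in $\bigcap_{\lambda \in T}\conv(S_\lambda)$ but not in $\conv(S)$. Formalizing this requires a local-curvature computation showing that $\partial \conv(S_\lambda)$ osculates $\partial \conv(S)$ only to finite order at generic tangency points, together with a compactness/continuity argument that lets me pick boundary points safely away from the finitely many tangency loci generated by $T$. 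The curvature bookkeeping, rather than the construction itself, is where I expect the bulk of the technical work to go.
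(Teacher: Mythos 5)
Your proposal is a plan rather than a proof, and the plan's central mechanism is the part most likely to fail. You never fix a concrete instance, and everything downstream (the description of $\partial\conv(S)$, the tangency structure of each $\conv(S_\lambda)$, the ``finite-order osculation'' claim) is asserted about a hypothetical example rather than verified. Worse, for the natural candidate your own recipe suggests --- and which is exactly the paper's example, $S=\{x,y_1,y_2\in[0,1]^3 \mid xy_1=0.5,\ xy_2=0.5\}$ --- the key claim that each $\conv(S_\lambda)$ meets the curved part of $\partial\conv(S)$ only in a lower-dimensional set is false: the single aggregation $\lambda=(1,1)$ gives $S_{(1,1)}=\{x(y_1+y_2)=1\}$, whose convex hull already supports $\conv(S)$ along the \emph{entire} curved surface $x(y_1+y_2)=1$. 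Curved faces are typically the easy part, captured by one well-chosen aggregation. The obstruction that actually forces infinitely many aggregations in the paper is flatness, not curvature: $\conv(S)$ is contained in the hyperplane $y_1=y_2$, whereas each aggregation $(1,-\theta)$ only certifies one of the two half-spaces $(y_1-0.5)-\theta(y_2-0.5)\ge 0$ (for $\theta<1$) or $\le 0$ (for $\theta>1$), and the equality $y_1=y_2$ is recovered only in the limit $\theta\to 1$ from both sides. This also exposes an internal tension in your write-up: if, as in your first part, each boundary piece of $\conv(S)$ were supported by a single explicit $\lambda$, you would be dangerously close to producing a finite $T$ that works, contradicting your second part.

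The paper's argument for the finite-$T$ lower bound is also much more elementary than the curvature-plus-compactness machinery you anticipate. It fixes the point $\left(\tfrac{3}{4},\tfrac{17}{24},\tfrac{17}{24}\right)\in\conv(S)$ and shows, by exhibiting explicit points of $S_\lambda$ and explicit convex multipliers in a short case analysis on $\theta=\lambda_2/\lambda_1$, that for every $\lambda$ there is an explicit $\hat\epsilon(\lambda)>0$ with $\left(\tfrac{3}{4},\tfrac{17}{24}+\hat\epsilon(\lambda),\tfrac{17}{24}-\hat\epsilon(\lambda)\right)\in\conv(S_\lambda)$; this point violates $y_1=y_2$, hence lies outside $\conv(S)$. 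For any finite $T$ one takes $\epsilon_0=\min_{\lambda\in T}\hat\epsilon(\lambda)>0$ and is done. If you want to salvage your approach, you must either (i) commit to an instance and prove the isolated-tangency claim for a specific strictly convex face, ruling out that some $\conv(S_\lambda)$ wraps that whole face, or (ii) switch to the flatness mechanism, which requires no curvature bookkeeping at all --- only an explicit off-hyperplane point together with a uniform positive lower bound on its distance to $\conv(S)$ over the finitely many $\lambda\in T$.
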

Our proof of Theorem~\ref{theorem:infinite_agg_needed} is provided in Section~\ref{sec:infinite_agg_needed_proof}. The proof of this result uses the simple set 
$$S= \left\{ x, y_1, y_2 \in [0, 1]^3 \, \left| \, \begin{array}{c} xy_1 = 0.5 \\ xy_2 = 0.5\end{array} \right. \right\}.$$

It it is easy to see that $\textup{conv}(S)$ is contained in the hyperplane $\{x, y_1, y_2\,|\, y_1 = y_2\}$. It turns out that $\bigcap_{\lambda \in T} \textup{conv}(S_\lambda)$ is not contained in this hyperplane for any finite set $T$. On the other hand, the full power of aggregation closure, that is using all aggregation weights, achieves the convex hull for this set.

Can we always expect to obtain the convex hull with the intersection of infinitely many aggregated sets? The next result shows that this is not a sufficient condition through a counterexample. It may seem natural that such a set exists that infinite aggregations do not yield the convex hull, but it was not straightforward to find such an example.
\begin{theorem}
\label{theorem:agg_no_work}
There exists an instance of the set $S$ as described in (\ref{eq:bb_set}) with $n_1 + n_2 \geq 3$, such that  the aggregation closure over $\mathbb{R}^2$ is not equal to the convex hull of $S$, that is,
\begin{align*}
    \conv(S)  \subsetneq \bigcap_{\lambda \in \mathbb{R}^2} \conv(S_\lambda).
\end{align*}
\end{theorem}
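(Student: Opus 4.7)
The plan is constructive: exhibit an explicit low-dimensional instance of $S$, along with a point $p^{\star}$ that lies in every aggregation $\textup{conv}(S_\lambda)$ but not in $\textup{conv}(S)$. Since Theorem~\ref{theorem:n1=n2=1} rules out failure at $n_1 = n_2 = 1$, and the example behind Theorem~\ref{theorem:infinite_agg_needed} already shows how rich $\bigcap_\lambda \textup{conv}(S_\lambda)$ can be at $n_1 + n_2 = 3$, I would search for a counterexample of the same size (say $n_1 = 1, n_2 = 2$ or $n_1 = 2, n_2 = 1$), which keeps the analysis tractable while being the smallest dimension in which the phenomenon is possible.

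The first step is to design the two equalities so that $\textup{conv}(S)$ is truly small—for instance, $S$ parametrizes as a short curve lying on an algebraic variety of codimension at least two—while each $S_\lambda$ sweeps out a two-dimensional bilinear surface whose convex hull is generous. The instance in Theorem~\ref{theorem:infinite_agg_needed} has the property that the constraints force $S$ into a hyperplane which aggregations can approach only in the limit, and infinitely many of them do suffice to cut out that hyperplane. To defeat aggregation entirely, I would instead seek constraints whose simultaneous solutions are forced onto a curved (nonlinear) locus that no aggregation can carve out, by e.g. choosing an instance where the two original bilinear equalities share no common linear combination that is either linear or a product-of-linear-forms, and where no limit of $\textup{conv}(S_\lambda)$ separates the interior of the "curvy gap" from $p^\star$. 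A natural template to tune is something like $x y_1 - \alpha = 0,\ x y_2 - \beta y_1 = 0$ with carefully chosen $\alpha, \beta$, or symmetric variants with a $(1-x)$ factor so that the feasible parameter range is a proper sub-interval.

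Once the instance is fixed, the analysis would proceed in three parts. (i) Compute $\textup{conv}(S)$ explicitly—parametrize $S$, identify its affine hull (or the minimal algebraic variety containing it), and then take the convex hull within that set. (ii) Identify a candidate $p^\star$ just outside $\textup{conv}(S)$; a natural first guess is a point on the boundary of the simpler outer approximation $\textup{conv}(S_{(1,0)}) \cap \textup{conv}(S_{(0,1)})$, which already strictly contains $\textup{conv}(S)$. (iii) For every $\lambda = (\lambda_1,\lambda_2)$, verify $p^\star \in \textup{conv}(S_\lambda)$ by exhibiting an explicit convex combination of feasible points of $S_\lambda$. Here I would invoke the disjunctive convex hull description from \cite{dey2019new,santana2020convex}, which characterizes $\textup{conv}(S_\lambda)$ as the convex hull of the union of sub-faces obtained by fixing all but one $x$-variable and one $y$-variable to their box bounds. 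Because each such sub-face admits a closed-form second-order-cone description, the verification reduces to writing $p^\star$ parametrically in $\lambda$ as a combination of points on these faces.

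The principal obstacle is twofold. First, finding the right $S$ is delicate: the requirement that $\textup{conv}(S)$ be genuinely small yet $p^\star \in \textup{conv}(S_\lambda)$ for \emph{all} $\lambda$ simultaneously is a tight coincidence, and natural attempts (as Theorem~\ref{theorem:infinite_agg_needed} illustrates) tend to yield the convex hull after all. Second, verifying the uniform inclusion $p^\star \in \textup{conv}(S_\lambda)$ over the whole pencil requires case analysis on the sign pattern and ratio of $(\lambda_1, \lambda_2)$, since the geometry of $S_\lambda$ changes qualitatively as $\lambda$ sweeps $\mathbb{R}^2$—non-degenerate hyperbolic arc, degenerate union of two lines, purely linear equality, and empty set all occur—and each regime needs its own convex-combination certificate, with the certificates matching up across transition directions in $\lambda$-space. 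I expect the last step, particularly the limiting regimes where $S_\lambda$ degenerates, to be where most of the technical work lives.
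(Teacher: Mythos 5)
Your overall strategy is exactly the one the paper uses: exhibit a concrete instance with $n_1+n_2=3$, a point $p^\star \notin \conv(S)$, and, for every $\lambda$, an explicit convex combination of points of $S_\lambda$ equal to $p^\star$. However, as written the proposal is a roadmap rather than a proof, and the gap is precisely the part you yourself flag as ``where most of the technical work lives.'' The theorem is an existence statement, so nothing is established until a specific instance is produced and both inclusions are verified for it. Your candidate template $xy_1-\alpha=0$, $xy_2-\beta y_1=0$ is not shown to work, and there is no reason to expect that it does: natural symmetric templates of this kind tend to land back in the situation of Theorem~\ref{theorem:infinite_agg_needed}, where the full aggregation closure \emph{does} recover the convex hull. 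The paper's working example is
\begin{align*}
-2xy_1 + 9xy_2 + y_1 - 5y_2 = 0, \qquad 5xy_1 + 3y_1 + 3y_2 = 6,
\end{align*}
with the point $\hat p = \left(\tfrac{7}{10},\tfrac{7}{8},\tfrac{1}{6}\right)$, and the coefficients are evidently the product of a careful search; finding them is the mathematical content of the theorem, not a preliminary to it.

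Two further remarks on the plan itself. First, separating $p^\star$ from $\conv(S)$ needs its own argument, which you do not sketch: the paper solves the two equalities for $y_1,y_2$ as rational functions of $x$, determines the feasible range of $x$ inside the box (here $x \geq \frac{2+4\sqrt{34}}{45}$), and maximizes a linear functional over the resulting curve to certify a separating hyperplane. Second, for the inclusion $p^\star \in \conv(S_\lambda)$ you do not need the disjunctive convex-hull characterization of $S_\lambda$ from the one-row literature; it suffices to exhibit, for each regime of $\theta = \lambda_2/\lambda_1$, a handful of points lying in $S_\lambda$ (the paper uses up to four, mostly on facets of the box, with coordinates rational in $\theta$) together with nonnegative weights summing to one. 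The paper needs six such regimes, and verifying that the weights remain in $[0,1]$ on each interval is part of the work you would still owe. Your suggestion to take $p^\star$ on the boundary of $\conv(S_{(1,0)}) \cap \conv(S_{(0,1)})$ is a reasonable heuristic but guarantees nothing about membership in $\conv(S_\lambda)$ for the remaining $\lambda$. Until an instance is fixed and these computations are carried out, the existence claim remains unproved.
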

A set that serves as an example for our proof of Theorem~\ref{theorem:agg_no_work} is the following:
\begin{eqnarray*}
S = \left\{ x, y_1, y_2 \in [0, 1]^3\, \left | \,  
\begin{array}{lcl} 
-2 x y_1 + 9 x y_2 + y_1 -5 y_2 &=& 0 \\ 
5 x y_1 + 3 y_1 + 3 y_2 &= & 6 
\end{array} \right. \right\}.
\end{eqnarray*}
The proof of Theorem~\ref{theorem:agg_no_work} relies on finding a point $(x^*, y_1^*, y_2^*) \notin \conv(S)$ but $(x^*, y_1^*, y_2^*) \in \conv(S_\lambda)$ for any $\lambda \in \mathbb{R}^2$. Certifying that $(x^*, y_1^*, y_2^*) \in \conv(S_\lambda)$ involves explicitly finding points in $S_{\lambda}$ and convex combination weights of these points to obtain $(x^*, y_1^*, y_2^*)$. The entire details of our proof are presented in Section~\ref{sec:agg_no_work_proof}.

\section{Computational evaluation of aggregation}
\label{sec:computation_evaluation}
As presented in Section~\ref{sec:theoretical_evaluation}, the exact convex hull of a set $S$ described in (\ref{eq:bb_set}) is given by the intersection of convex hulls of aggregations in some specific cases. In this section, we would like to evaluate the quality of bounds given by the aggregation closure even if the exact convex hull cannot be obtained using it. In order to achieve this goal we will present two computational experiments on: (i) randomly generated instances; and (ii) instances from the finite element model updating problem. 
Before we discuss the results of our experiments, we discuss different methods to find aggregation weights, convex relaxations to be compared, and what metric we use to evaluate the strength of the aggregation.

\subsection{Finding aggregation weights}
\label{sec:aggregation_weights}
A key part of adding the convex hull of an aggregated constraint is finding a ``good'' way to aggregate, that is, finding appropriate weights. Although adding convex hulls of many different aggregations gives a tighter outer approximation of the convex hull of the two constraints, that is 
$$ \cap_{\lambda \in T_1} \conv(S_\lambda) \subseteq \cap_{\lambda \in T_2} \conv(S_\lambda) \ \textup{ for any }T_2 \subseteq T_1,$$ there is a trade-off with the size and complexity of the problem. We therefore consider adding the convex hull of exactly one aggregated constraint for two constraints. To find aggregation weights for the two constraints, we test several heuristic methods. Two of these heuristics are based on the spirit of separating a relaxed solution $(\hat x, \hat y)$. A relaxed solution can be easily obtained by solving a relaxation problem without aggregation. 
 Given the similarity between the aggregation approach and the surrogate relaxation approach, the third method follows the approach used in surrogate duality theory.
\begin{enumerate}
    \item Grid search: 
    We search for aggregation weights $\lambda$ from a grid $G$. In our experiments, we set the size of $G$ to be 20, where    
    \begin{align*}
        G = \{(1,2),(1,2^2),...,(1,2^5),(1,-2),...,(1,-2^5),(2,1),...,(2^5,1), (-2,1),...,(-2^5,1)\}.
    \end{align*}
    We pick $\lambda \in G$ that maximizes the distance between $\conv(S_\lambda)$ and a relaxed solution $(\hat x, \hat y)$ that we would like to separate. Specifically, define:
    \begin{equation}
    \begin{aligned}
    \label{eq:gridsearch}
        d(\lambda) = \quad \min_{x,y} \quad & ||(x,y) - (\hat x, \hat y)|| \\
        s.t. \quad & (x, y) \in \conv(S_\lambda) \\
        & x \in [0,1]^{n_1}, y \in [0,1]^{n_2}.
    \end{aligned}
    \end{equation}
    Pick $\lambda^* = \arg\max\{d(\lambda), \lambda \in G\}$.
    
    \item Simple search:  
    Given a relaxed solution $(\hat x, \hat y)$, we first fix $y$ to $\hat y$ in $S_\lambda$. Then, $S_\lambda |_{y=\hat y}$ is a hyperplane in the $x$ space with parameters defined by $\lambda$. The resulting hyperplane can be written in the form of $p^\top x = q$ for some $p$ and $q$. We wish to pick $\lambda \in \mathbb{R}^2$ such that it maximizes the distance between the hyperplane and $\hat x$. 
    This gives us the following optimization problem where     $p^\top \hat x - q$ is the distance between the hyperplane and $\hat x$. 
    \begin{equation}
    \begin{aligned}
    \label{eq:simplesearch}
        d(\hat y) = \max_{\lambda_1, \lambda_2, p, q} \quad 
        & p^T \hat x - q \\
        & p = \lambda_1 Q_1 \hat y + \lambda_1 a_1 + \lambda_2 Q_2 \hat y + \lambda_2 a_2 \\
        & q = - \left(\lambda_1 b_1^\top \hat y + \lambda_1 c_1 + \lambda_2 b_2^\top \hat y + \lambda_2 c_2 \right) \\
        & ||p||_2 \leq 1, -100 \leq \lambda_1, \lambda_2 \leq 100
    \end{aligned}
    \end{equation}
    We can similarly define $d(\hat x)$ by fixing $x$ to $\hat x$ and find aggregation weights that maximize the distance between a hyperplane defined with respect to $y$ and $\hat y$.
    We then pick $\lambda = (\lambda_1, \lambda_2)$ corresponding to the larger to the two $d(\hat x)$ or $d(\hat y)$.
    \item Surrogate search: We consider finding the aggregation weights based on results from the surrogate duality literature. 
    In \cite{muller2022generalized}, the authors present a Bender's decomposition-type approach to find surrogate dual multipliers that will yield the best lower bound. We adopt algorithm 1 in their paper to find our multipliers. Specifically, the algorithm is based on constraints of the form $g(x) \leq 0$. Hence, we split our equality constraints into two inequality constraints. Then the algorithm returns surrogate dual multipliers $\lambda \in \mathbb{R}^{4}_+$. We then define $\mu = (\lambda^*_1 - \lambda^*_2, \lambda^*_3 - \lambda^*_4)$ to be our aggregation weights.
    There is also a generalized version of surrogate relaxations which allows multiple aggregation weights \cite{glover1975surrogate}.
    Although multiple aggregations give a tighter relaxation, as we discussed earlier, for the purpose of this paper, we focus on adding one aggregated constraint to evaluate its performance.
\end{enumerate}

\subsection{Convex relaxation}
We will consider three types of convex relaxations in our experiments:
\begin{enumerate}
    \item McCormick relaxation: This is the standard linear programming relaxation using the McCormick relaxation of each bilinear term \cite{mccormick1976computability}. For specific types of bilinear bipartite problems, McCormick relaxation is known to perform better than semi-definite programming relaxation~\cite{gu2024solving}.
    \item One-row relaxation: We
    build a polyhedral outer approximation of the convex hull of each bilinear constraint. This polyhedral relaxation is built in the same way as in section 5.2.2 of \cite{dey2019new}. The final convex relaxation is the intersection of each of these polyhedra together with other linear constraints and bounds in the problem.
    \item Aggregation relaxation: We find aggregation weights for selected subsets of two rows, using one of the methods from Section~\ref{sec:aggregation_weights}. Then we add the polyhedral outer approximation of the convex hull of each aggregated constraint to the one-row relaxation.
\end{enumerate}

\subsection{Evaluation metric}
The main evaluation metric we use to compare the quality of the lower bound achieved is the relative gap improvement ($\Delta\rho$). We first define the relative optimality gap ($\rho$). Let $z_{opt}$ be the best primal objective value found and $z^*$ be the lower bound achieved by a method that we consider. 
For example, $z^*_{Mc}$ and $z^*_{1row}$ are the lower bounds achieved from McCormick relaxation and the one-row relaxation. For aggregation relaxation, we use $z^*_{simple}$, $z^*_{surr}$, and $z^*_{grid}$ to be the lower bound achieved from aggregation where the weights are obtained by simple heuristic technique,  surrogate search, and grid search, respectively.
Then, the relative optimality gap is defined as:
\begin{align*}
    \rho = \frac{|z_{opt}-z^*|}{|z_{opt}|} \times 100 \%.
\end{align*}
We choose one of $z^*_{Mc}$, $z^*_{1row}$, $z^*_{simple}$, $z^*_{surr}$, and $z^*_{grid}$ to be the base and calculate how much the relative gap has improved compared to the base. For example, if we use $z^*_{Mc}$ to be the base, the relative gap improvement is defined as:
\begin{align*}
    \Delta \rho_{Mc} = \frac{z^* - z^*_{Mc}}{z_{opt} - z^*_{Mc}} \times 100\% = \frac{\rho - \rho_{Mc}}{\rho_{Mc}} \times 100\%.
\end{align*}
If we use $z^*_{1row}$ to be the base, the relative gap improvement is defined as:
\begin{align*}
    \Delta \rho_{1row} = \frac{z^* - z^*_{1row}}{z_{opt} - z^*_{1row}} \times 100\% = \frac{\rho - \rho_{1row}}{\rho_{1row}} \times 100\%.
\end{align*}
Smaller $\rho$ implies that the relative optimality gap is small and larger $\Delta \rho$ implies that the relative gap improvement against the baseline is large. Hence, smaller $\rho$ and larger $\Delta \rho$ is desirable.

\subsection{Environment and software}\label{sec:software}

{All numerical instances are implemented on Julia version 1.7 with Gurobi version 10.1 used to solve the linear programming relaxation of the lower bounding problem and Ipopt~\cite{wachter2006implementation} was used to solve the upper bound problem at each node of the branch-and-bound tree. Gurobi was also used to solve \eqref{eq:gridsearch} and \eqref{eq:simplesearch}. 
BARON version 24.5.8 was used for benchmarking our results. A custom branch-and-bound was built adapting the Julia package BranchAndBound.jl \cite{bnbjulia}.} The experiments were run on a personal laptop with a Windows 64-bit operating system with a 1.8GHz processor and 16GB RAM.

\subsection{Randomized experiment}
We first test the power of aggregation on randomly generated instances.
\subsubsection{Instance generation}
We generate random instances of the form:
\begin{align*}
    \min_{x, y} \quad & f^\top x + g^\top y\\
    s.t. \quad & x^\top Q_k y + a_k^\top x + b_k^\top y + c_k = 0, \qquad k \in [2]\\
    & x \in [0,1]^{n_1}, y \in [0,1]^{n_2}.
\end{align*}
These small dense instances are generated for $(n_1,n_2) \in \{(2,2), (3,3), (5,5)\}$ with all variables appearing in each constraint. For fixed $n_1$ and $n_2$, we generate entries of $f$, $g$, $Q_k$, $a_k$, $b_k$ and $c_k$ iid from discrete uniform distribution on $\{-10,10\}$. We generated 10 instances each for every choice of $(n_1, n_2)$.

\subsubsection{Evaluation of results on random instances}
The results presented in Figure~\ref{fig:randexp_relimprovement_mc} show the relative gap improvement against the McCormick relaxation. A lot of the gap from McCormick relaxation is already closed by taking one-row relaxation. 
However, adding a single aggregated constraint further closes the gap by nontrivial amounts. The additional improvement is more than 10\% for $(n_1,n_2) \in \{(2,2),(3,3)\}$ instances and almost 8\% for $(n_1,n_2) = (5,5)$ instances on average. 

Some instances are solved close to optimality with relative optimality gap $\rho < 0.01\%$ using the one-row relaxation without any aggregation. In Figure~\ref{fig:randexp_relimprovement_1row}, we focus on the remaining instances where the one-row relaxation without the aggregation does not close a lot of the gap and highlight the impact of adding an aggregated constraint. From both figures, we conclude that the largest improvements are achieved from the grid search method. 

\begin{figure}[tbh!]
    \centering
    \includegraphics[width=0.75\linewidth]{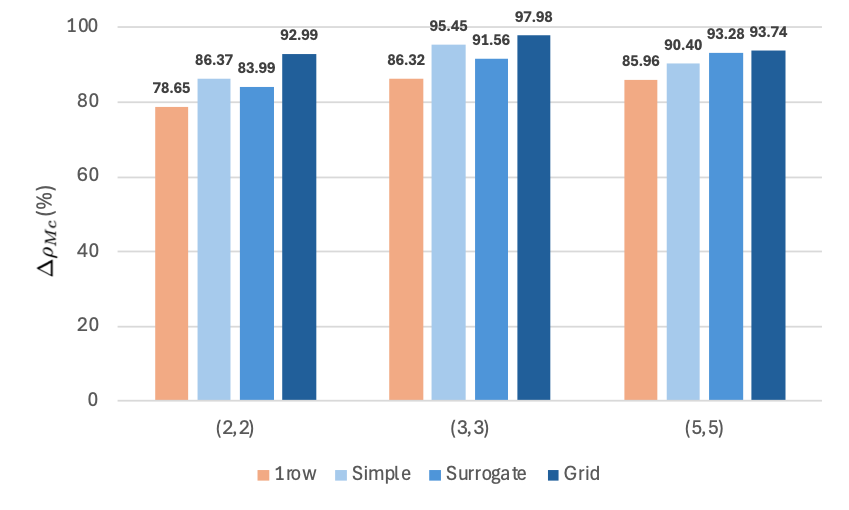}
    \caption{Average relative improvement (\%) against the gap achieved from the McCormick relaxation for different choices of $(n_1,n_2)$.}
    \label{fig:randexp_relimprovement_mc}
\end{figure}

\begin{figure}[tbh!]
    \centering
    \includegraphics[width=0.75\linewidth]{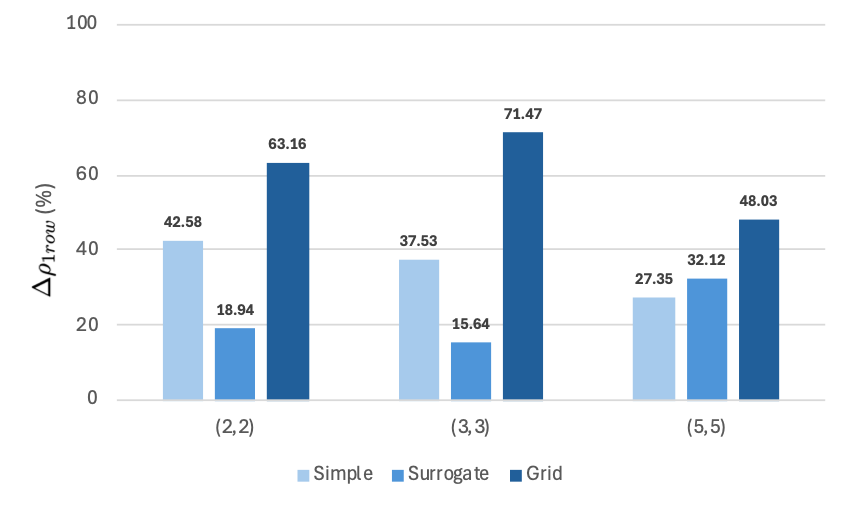}
    \caption{Average relative improvement (\%) against the gap achieved from the one-row relaxation for different choices of $(n_1,n_2)$ for instances excluding $\rho_{1row} < 0.01\%$.}
    \label{fig:randexp_relimprovement_1row}
\end{figure}

We also want to ensure that the search times for aggregation weight are reasonable. Table~\ref{tab:randexp_times} shows the average time required to find aggregation weights for different methods. Note that the simple search takes almost no time as it requires solving simple convex optimization problems. The grid search method requires solving $|G|$ number of optimization problems independently; hence, we use 8 threads to parallelize. We also report the estimated time that would take to solve if we had the computing power to parallelize solving all $|G|$ number of optimization problems in column ``Grid (parallel est.)''. We note that the time spent can be significantly reduced if all is parallelized. 
The time reported for ``Grid (parallel est.)'' column adds: (i) time to solve a relaxation problem to obtain a relaxed solution $(\hat x, \hat y)$ to separate; (ii) the maximum of the time spent to solve the optimization problem \eqref{eq:gridsearch} among $\lambda \in G$; and (iii) the time spent to select $\lambda^*$. We can hence conclude that using the grid search method is also reasonable in time.
Unlike the grid search method, the surrogate search method requires solving optimization problems recursively which hinders parallelization.

\begin{table}[tbh!]
\centering
\caption{Average time spent in finding aggregation weights (sec)}
\label{tab:randexp_times}
\begin{tabular}{lrrrr}
\toprule
$(n_1,n_2)$ & \multicolumn{1}{c}{Simple} & \multicolumn{1}{c}{Surrogate} & \multicolumn{1}{c}{Grid} & \multicolumn{1}{c}{Grid (parallel est.)} \\
\midrule
$(2,2)$ & 0.26 & 6.84 & 7.21 & 1.71 \\
$(3,3)$ & 0.63 & 9.31 & 8.36 & 2.27 \\
$(5,5)$ & 1.09 & 13.51 & 22.15 & 5.76 \\
\bottomrule
\end{tabular}
\end{table}

\subsection{Finite Element Model updating problem}
The finite element model (FEM) update problem is a crucial problem in the field of structural engineering, that seeks to minimize the differences between a structure's as-built behaviors and those predicted by its FEM.
This problem can be viewed as selecting optimal stiffness parameter values from a given affine subspace parameterized by $\alpha$ variables, such that the generalized eigenvalues ($\lambda$) between the stiff and mass matrices as well as some entries of corresponding eigenvectors ($\psi$) match experimentally observed values from vibration testing \cite{otsuki2021finite}. 
The goal is to minimize the maximum absolute difference between the experimental measurements of eigenvalues and eigenvectors and the eigen-pairs produced by the FEM. This value is represented as $\delta$ and the problem can be reformulated as below:
\begin{align}
    \min_{\alpha, \lambda, \psi, \delta} \quad & \delta \notag \\
     \text{s.t.} \quad 
    & \left( K_0 + \sum_{j \in [k]} \alpha_j K_j - \lambda_i M \right) \psi_i = 0 && \forall i \in [m] \label{eq:eigen_constraint}\\
    & (\lambda, \psi, \delta) \in P \notag \\
    & \alpha \in [\alpha_{lb}, \alpha_{ub}], \ \lambda \in [\lambda_{lb}, \lambda_{ub}], \ \psi \in [\psi_{lb}, \psi_{ub}] \notag
\end{align}
where $M, K_0, K_j \in \mathbb{R}^{n \times n}$ for $j \in [k]$ and $P$ is a polyhedron modeling the weighted penalties with respect to the deviation from experimental eigenvalues and eigenvectors.
The number $n$ representing the number of rows of the stiffness matrix $K_0$ corresponds to the 
degrees of freedom which represents the number of stories of a structure in simplified models such as the shear frame model used in this paper;
henceforth we refer to an instance with $n\times n$ matrices as an $n$-story instance. 
After rescaling and linear transformation, the problem can be equivalently formulated as a bilinear bipartite problem of the form in \eqref{eq:bb_set} where we refer to the $\alpha$ and $\lambda$ variables as $x$ variables and refer to the $\psi$ variables as $y$ variables.
We use simulated structural instances similar to the 18-story structure in \cite{otsuki2021finite}. The instances are available \href{https://github.com/han903/aggregation_bbe}{here}.

\begin{table}[tbh!]
\caption{Summary of Instances}
\label{tab:instances}
\centering
\begin{tabular}{lccc}
\toprule
    Data & \# of $x$ variables ($n_1$) & \# of $y$ variables ($n_2$) & \# of bilinear constraints ($n m$) \\
    \midrule
    12-story & 14 & 24 & 24 \\
    16-story & 18 & 48 & 48 \\
    \bottomrule
\end{tabular}
\end{table}
\subsubsection{Selection of rows for aggregation}
We note that all the $K_j$ matrices are tridiagonal. Thus, the supports of $x$ variables are very similar in the constraints corresponding to consecutive rows of \eqref{eq:eigen_constraint}. 
Therefore, we choose only to aggregate constraints corresponding to consecutive rows $\{1,2\}, \{3,4\}, \dots$, as it does not create very dense constraints. Note that convexification takes a long time if we have dense constraints as the number of disjunctions increases exponentially with the number of variables.
Overall, this implies that we add $(n/2)\cdot m$ aggregated constraints, which is an addition to the $nm$ bilinear constraints.

\subsubsection{branch-and-bound}
We apply the convexification idea together with the branch-and-bound algorithm as proposed in \cite{dey2019new}. 
Specifically, the 
the bilinear constraints (original and aggregated) whose convex hull is used to build the convex relaxation are selected once at the root node. At each node of the tree, we update the bounds on the variables, and recompute the polyhedral outer approximations of the
convex hull of each of these constraints.
The branching rule of the branch-and-bound algorithm, chooses a variable among one of the $x$ variables, such that it minimizes the sum of the volume of resulting new 2-dimensional convex hulls (after fixing variable). This is precisely the same rule as used in \cite{dey2019new}. A termination criterion for the branch-and-bound process is set to be $\rho$ being less than 0.5\%.
\paragraph{Bound reduction: } Before we solve the problem, we reduce the bounds on variables using optimality-based bound tightening (``OBBT'') to reduce the box constraints. These reduced bounds are then used with all the methods for solving the instance. For example, reduced bounds are provided to BARON, McCormick relaxation, one-row relaxation and aggregation relaxation. Details of the bound reduction process are provided in Appendix~\ref{app:bound_reduction}.

\subsubsection{Evaluation at the root node}
We first evaluate the additional root node gap closed in the branch-and-bound tree using the aggregated constraints.
Figure~\ref{fig:fem_rootnode} shows that adding $nm/2$ aggregated constraints can yield an average relative gap improvement of 2.94\%,	2.98\%, and 6.82\% for 12-story data and 2.56\%, 2.84\%, 4.88\% for 16-story data based on weights found by simple search, surrogate search, and grid search respectively. It is clear that the grid search significantly improves the relative root node gap compared to that of the one-row relaxation without aggregated constraints. The simple search and the surrogate search yielded less effective aggregations.

\begin{figure}[tbh!]
  \centering
  \begin{subfigure}[b]{0.48\textwidth}
    \centering
    \includegraphics[width=\textwidth]{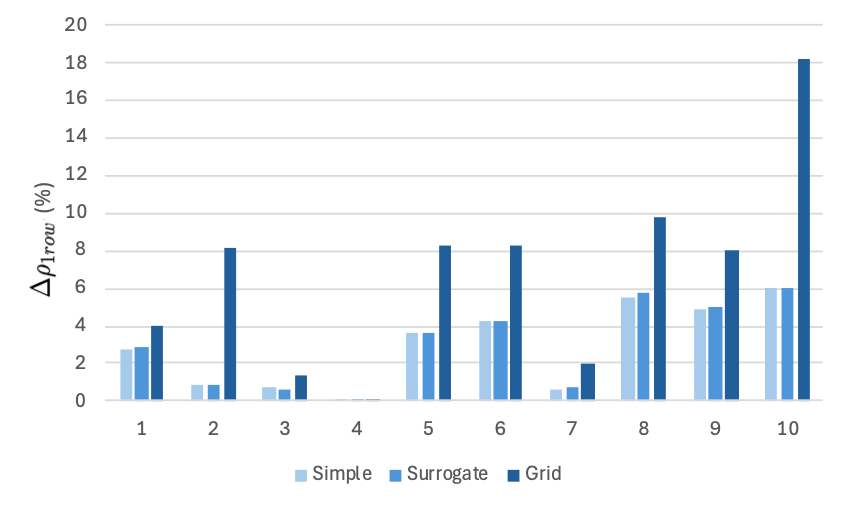}
    \caption{12-story data}
    \label{fig:12story_rootnode}
  \end{subfigure}
  \begin{subfigure}[b]{0.48\textwidth}
    \centering
    \includegraphics[width=\textwidth]{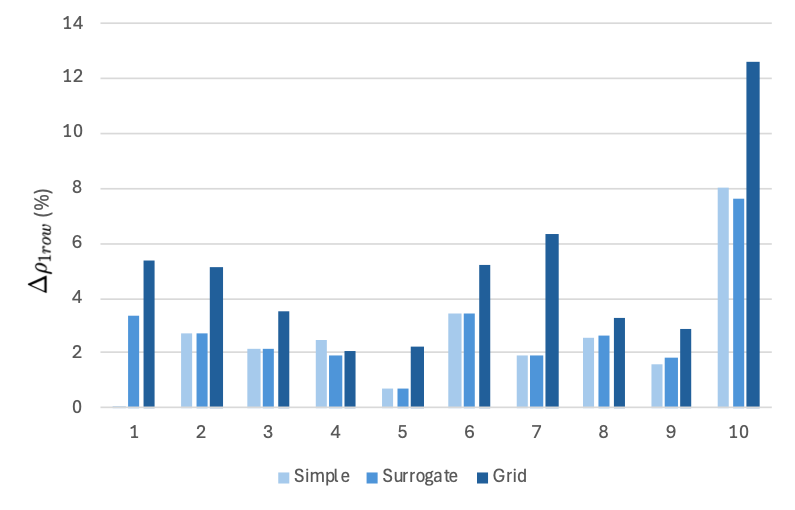}
    \caption{16-story data}
    \label{fig:16story_rootnode}
  \end{subfigure}
  \caption{Relative gap improvement (\%) against the gap achieved from the one-row relaxation for different instances.}
  \label{fig:fem_rootnode}
\end{figure}

Finally, Table~\ref{tab:fem_times} shows the average time spent to find the aggregation weights using different methods. The grid search now requires solving $|G| \times n/2$ optimization problems which is 240 for 12-story and 480 optimization problems. This takes a relatively longer time to solve. However, as we have discussed previously for the case of experiments on random instances, with complete parallelization, the estimated time is comparable to the simple heuristic method.
\begin{table}[tbh!]
\centering
\caption{Average time spent in finding aggregation weights (sec)}
\label{tab:fem_times}
\begin{tabular}{lrrrr}
\toprule
data & \multicolumn{1}{c}{Simple} & \multicolumn{1}{c}{Surrogate} & \multicolumn{1}{c}{Grid} & \multicolumn{1}{c}{Grid (parallel est.)} \\
\midrule
12-story & 2.75 & 28.31 & 169.24 & 4.19 \\
16-story & 6.15 & 82.34 & 1441.51 & 8.36 \\
\bottomrule
\end{tabular}
\end{table}

\subsubsection{Evaluation of bounds obtained by the branch-and-bound tree}
From both the randomized experiment evaluation and the root node evaluation, we see that the grid search provides the most improvement. We therefore compare solving the FEM instances using the global solver BARON, using the one-row relaxation without any aggregated constraints, and using the aggregation relaxation where the aggregation weights are selected through grid search. The time limit is set to 1 hour (3600 seconds) for both the commercial solver and the custom branch-and-bound solver. 

In our preliminary experiments, we found that adding the convex hull of additional $nm/2$ aggregated constraints significantly slows down the solving time at each node of the branch-and-bound tree.
Hence, we limit the number of aggregated constraints to be added to $t$ constraints where $t = 2 $ for the 12-story instances and $t = 5$ for 16-story instances. 
We select the top $t$-aggregated constraints by sorting the distance metrics \eqref{eq:gridsearch} for the $nm/2$ aggregations.

Tables~\ref{tab:12story_final_gap} and ~\ref{tab:16story_final_gap} show the final relative gap at the end of the time limit ($\rho$), the relative gap improvement against using the global solver BARON ($\Delta\rho_{BARON}$), and the number of nodes solved in the branch-and-bound tree. Although we examined 10 instances in each of the 12-story and 16-story data sets, one instance in each data set was solved to optimality by both BARON and the aggregation approach with grid search within $1$ minute, and by one-row relaxation within $10$ minutes-- all well within the time limit; hence it is excluded from the analysis. First, we see that using the one-row relaxation provides a much smaller final relative gap compared to the commercial solver in all instances but instance 8 for the 16-story instances. Secondly, when we compare the one-row relaxation approach and the relaxation with aggregations approach, adding a subset of aggregated constraints excels in closing the final gap in most instances despite fewer nodes being explored.  Furthermore, in 12-story instances, 3 out of 9 instances are solved to optimality by the aggregation technique, whereas only 1 instance is solved to optimality without aggregation.

It is possible that adding aggregated constraints may not benefit the branch-and-bound process. Recent studies~\cite{shah2024non} have shown how adding cuts may increase the size of the branch-and-bound tree due to wrong branching decisions. For instance 3 in 16-story data, even though a similar number of nodes were explored by the time limit, one-row relaxation without aggregated constraints reached a smaller final relative gap.

Nevertheless, adding the aggregated constraints outperforms the one-row relaxation without aggregation in all but one instance in each data set and improves the gap improvement from 68.59\% to 81.55\% for 12-story data and from 24.56\% to 39.22\% for the 16-story data.
\begin{table}[tbh!]
\centering
\caption{Final gap for 12-story data}
\label{tab:12story_final_gap}
\begin{tabular}{lrrrrrrrrr}
\toprule
& \multicolumn{3}{c}{$\rho$ (\%)} & \multicolumn{1}{l}{} & \multicolumn{2}{c}{$\Delta\rho_{BARON}$ (\%)} & \multicolumn{1}{l}{} & \multicolumn{2}{c}{Number of Nodes} \\ 
\cmidrule{2-4} \cmidrule{6-7} \cmidrule{9-10}
Instance & \multicolumn{1}{c}{BARON} & \multicolumn{1}{c}{1Row} & \multicolumn{1}{c}{Grid} & \multicolumn{1}{c}{} & \multicolumn{1}{c}{1Row} & \multicolumn{1}{c}{Grid} & \multicolumn{1}{c}{} & \multicolumn{1}{c}{1Row} & \multicolumn{1}{c}{Grid} \\
\midrule
1 & 3.61 & 0.48 & 0.50$^{\dagger}$ &  & 86.70 & 86.17$^{\dagger}$ &  & 859 & 665 \\
2 & 4.24 & 0.64 & 0.51 &  & 84.94 & 88.04 &  & 1663 & 942 \\
3 & 4.64 & 1.53 & 2.43 &  & 66.92 & 47.63 &  & 1512 & 1080 \\
4 & 4.07 & 0.90 & 0.49 &  & 77.80 & 87.89 &  & 1636 & 700 \\
5 & 3.73 & 2.60 & 0.98 &  & 30.31 & 73.73 &  & 1784 & 937 \\
6 & 4.92 & 0.89 & 0.58 &  & 81.84 & 88.12 &  & 1582 & 1019 \\
7 & 2.54 & 1.16 & 0.33 &  & 54.11 & 87.05 &  & 1680 & 1096 \\
8 & 8.41 & 3.29 & 0.91 &  & 60.85 & 89.21 &  & 1664 & 1083 \\
9 & 4.61 & 1.21 & 0.64 &  & 73.79 & 86.11 &  & 1597 & 1094 \\
\textbf{Average} & \textbf{4.02} & \textbf{1.33} & \textbf{0.75} &  & \textbf{68.59} & \textbf{81.55} & \textbf{} & \textbf{1553} & \textbf{957} \\
\bottomrule
\end{tabular}
\\ \raggedright$^{\dagger}$\small{The termination criterion of relative gap 0.5\% is reached; hence, the branch-and-bound process ended. This instance does not count as an instance adding the aggregation slows down the branch-and-bound process and results in a larger final relative gap.}\\
\end{table}

\begin{table}[tbh!]
\centering
\caption{Final gap for 16-story data}
\label{tab:16story_final_gap}
\begin{tabular}{lrrrrrrrrr}
\toprule
& \multicolumn{3}{c}{$\rho$ (\%)} & \multicolumn{1}{l}{} & \multicolumn{2}{c}{$\Delta\rho_{BARON}$ (\%)} & \multicolumn{1}{l}{} & \multicolumn{2}{c}{Number of Nodes} \\ 
\cmidrule{2-4} \cmidrule{6-7} \cmidrule{9-10}
Instance & \multicolumn{1}{c}{BARON} & \multicolumn{1}{c}{1Row} & \multicolumn{1}{c}{Grid} & \multicolumn{1}{c}{} & \multicolumn{1}{c}{1Row} & \multicolumn{1}{c}{Grid} & \multicolumn{1}{c}{} & \multicolumn{1}{c}{1Row} & \multicolumn{1}{c}{Grid} \\
\midrule
1 & 19.53 & 14.35 & 11.87 &  & 26.55 & 39.24 &  & 496 & 447 \\
2 & 6.44 & 5.67 & 3.73 &  & 11.98 & 42.13 &  & 533 & 368 \\
3 & 5.52 & 3.91 & 4.55 &  & 29.04 & 17.52 &  & 517 & 513 \\
4 & 28.64 & 15.67 & 14.62 &  & 45.29 & 48.95 &  & 682 & 416 \\
5 & 4.57 & 1.72 & 0.67 &  & 62.42 & 85.38 &  & 541 & 384 \\
6 & 9.22 & 5.15 & 3.67 &  & 44.20 & 60.15 &  & 687 & 520 \\
7 & 5.15 & 4.44 & 3.09 &  & 13.90 & 40.00 &  & 518 & 408 \\
8 & 6.14 & 8.57 & 6.98 &  & -39.57 & -13.62 &  & 570 & 531 \\
9 & 6.26 & 4.56 & 4.18 &  & 27.26 & 33.26 &  & 509 & 457 \\
\textbf{Average} & \textbf{10.16} & \textbf{7.11} & \textbf{5.93} &  & \textbf{24.56} & \textbf{39.22} &  & \textbf{561} & \textbf{449} \\
\bottomrule
\end{tabular}
\end{table}

\section{Conclusion }
\label{sec:conclusion}
We studied the theoretical and computational power of aggregating constraints. We showed that the aggregations yield a convex hull of a bounded set with two bilinear bipartite equalities for the two-variable case. As soon as the number of variables increases to 3 or more, a set may need infinite aggregations to yield the convex hull or even infinite aggregations may not yield the convex hull. Although there is a theoretical limitation that the exact convex hull may not be achieved from aggregations, we show computationally that finding ``good'' aggregations using a grid search can provide much tighter convex relaxation of the sets. 
In particular, we applied the aggregation procedure to real-life applications on the FEM update problem, where the aggregations benefit the branch-and-bound process by closing more gaps within a specified time limit. 

\section{Proof of Theorem~\ref{theorem:n1=n2=1}}
\label{sec:proof_n1=n2=1}

\begin{observation} 
\label{observation:hyperbola_geometry}
We use the following elementary observations regarding the geometry of the 2-dimensional hyperbola several times in the proof.  Given a  hyperbola:
$$G:= \{(x,y)\in \mathbb{R}^2\,|\,  (x - u)(y - v) = w\}, $$
\begin{itemize}
\item If $w = 0$, $G$ is just the union of the two lines $\{(x,y)\,|\, x = u\}$ and $\{(x,y)\,|\, y = v\}$.
\item If $w>0$, then $
G$ is the union of two connected components, one component is contained in $\{(x,y)\,|\, x > u, y > v\}$ and the other component is contained in $\{ (x,y)\,|\, x < u, y < v\}.$ Moreover, $y  = \frac{w}{x - u} + v$ (equation describing $G$) is convex   in $\{(x,y)\,|\, x > u, y > v\}$ and concave in $\{ (x,y)\,|\, x < u, y < v\}.$  In both branches, $y  = \frac{w}{x - u} + v$  is monotonically decreasing. 

\item If $w < 0$, then $G$ is the union of two connected components (also referred to as branches), one contained in $\{(x,y)\,|\, x < u, y > v\}$ and the other contained in $\{(x,y)\,|\, x > u, y < v\}$. Moreover $y = \frac{w}{x - u} + v$ (equation describing $G$) is convex in $\{(x,y)\,|\, x < u, y > v\}$ and concave in $\{(x,y)\,|\, x > u, y < v\}$. In both branches, $y = \frac{w}{x - u} + v$  is monotonically increasing.
\item In particular, if $u \notin (0,1)$ or $v\notin (0,1)$, then only one branch of $G$ intersects the $[0,\ 1]^2$ box.
\end{itemize}
\end{observation}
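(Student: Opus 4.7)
The plan is to verify the four bullets of the observation by direct calculus and sign analysis applied to the defining equation $(x-u)(y-v) = w$. All four statements reduce to standard facts about the function $f(x) = \dfrac{w}{x-u} + v$, so the proof is essentially computational; there is no real obstacle, only bookkeeping of signs and cases.

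First I would handle the degenerate case $w = 0$, which is immediate: the product of two real numbers is zero if and only if one of them is zero, so $(x-u)(y-v)=0$ factors as the union of the two lines $\{x = u\}$ and $\{y = v\}$. For $w \neq 0$, the asymptotes $x = u$ and $y = v$ are disjoint from $G$ (since $0 \neq w$), so I may solve for $y$ to obtain $y = f(x) = w/(x-u) + v$ on the domain $\{x \neq u\}$. The sign analysis is then trivial: for $w > 0$, the identity $(x-u)(y-v) = w$ forces $x - u$ and $y - v$ to have the same (strict) sign, so $G$ decomposes into the two components claimed; for $w < 0$, the two factors must have opposite signs, yielding the other split. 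The fact that each component is connected follows because $f$ is continuous on each of the open intervals $(-\infty,u)$ and $(u,\infty)$.

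Next I would establish the monotonicity and convexity claims by differentiating $f$. We have
\begin{equation*}
f'(x) = -\frac{w}{(x-u)^2}, \qquad f''(x) = \frac{2w}{(x-u)^3}.
\end{equation*}
When $w > 0$, $f'(x) < 0$ everywhere on the domain, so $y = f(x)$ is monotonically decreasing on each branch; and $f''(x)$ has the sign of $(x-u)^{-3}$, which is positive on the right branch $\{x > u, y > v\}$ (convex) and negative on the left branch $\{x < u, y < v\}$ (concave). When $w < 0$, the signs flip: $f'(x) > 0$ gives monotonic increase, and $f''$ is positive on $\{x < u\}$ (so convex on the branch in $\{x < u, y > v\}$) and negative on $\{x > u\}$ (concave on the branch in $\{x > u, y < v\}$). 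This matches the two substatements for $w>0$ and $w<0$.

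Finally, for the last bullet, I would argue that if $u \notin (0,1)$, then $u \leq 0$ or $u \geq 1$, and consequently for every $x \in [0,1]$ for which $x \neq u$, the quantity $x - u$ has a single fixed strict sign (nonnegative throughout if $u \leq 0$, nonpositive if $u \geq 1$, and strictly so off of at most one boundary point of the box). Combined with the sign constraint on $(x-u)(y-v)=w$ already established, the sign of $y - v$ is then also forced, placing all of $G \cap [0,1]^2$ in exactly one of the two branches. The symmetric argument applies when $v \notin (0,1)$. The only care needed is at the boundary cases $u \in \{0,1\}$ or $v \in \{0,1\}$, where the asymptote grazes the corner of the box but contributes no points to $G$ since the asymptote itself is disjoint from $G$.
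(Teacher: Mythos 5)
Your proof is correct and complete: the sign analysis of $(x-u)(y-v)=w$, the derivatives $f'(x)=-w/(x-u)^2$ and $f''(x)=2w/(x-u)^3$, and the fixed-sign argument for $x-u$ on $[0,1]$ when $u\notin(0,1)$ establish all four bullets, including the boundary cases $u\in\{0,1\}$ where the asymptote meets the box only at points not belonging to $G$. The paper states this observation as elementary and offers no proof of its own, so there is nothing to compare against; your verification is exactly the standard one that the authors implicitly rely on.
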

For the purpose of this proof, the set $S$ may be written as:
\begin{eqnarray}
\label{eq:S_original}
S = \left\{ x,y \in [0,1]^2 \, \left | \,  
\begin{array}{lcl} 
xy + a_1 x + b_1 y + c_1 & = & 0 \\ 
xy + a_2 x + b_2 y + c_2 & = & 0
\end{array} \right. \right\},
\end{eqnarray}
since if $q_1 = q_2 =0$, then there is nothing to prove and if one of $q_1, q_2$ is zero, then we may take a suitable aggregation to arrive at (\ref{eq:S_original}).

The general scheme of the proof is illustrated in Figure~\ref{fig:n1=n2=1_proof_sketch}. Figure~\ref{fig:n1=n2=1_proof_sketch_a} shows the convex hull of each constraint separately, and their intersection significantly overestimates the convex hull of $S$. In Figure~\ref{fig:n1=n2=1_proof_sketch_b}, by taking an affine transformation of the constraints to make one of the constraints linear, we can represent $S$ equivalently in the following form:
\begin{eqnarray}
\label{eq:S_equiv}
S = \left\{ x,y \in [0,1]^2 \, \left | \,  
\begin{array}{lcl} 
xy + a_1 x + b_1 y + c_1 & = & 0 \\ 
m x - y + b & = & 0
\end{array} \right. \right\},
\end{eqnarray}
where $m = -(a_1 - a_2)/(b_1 - b_2)$ and $b = -(c_1 - c_2)/(b_1 - b_2)$.
Note that equivalently the second constraint in \eqref{eq:S_equiv} is an aggregation of the two constraints in \eqref{eq:S_original}.  
Furthermore, from Assumption~\ref{assumption:1} and Assumption~\ref{assumption:2}, in case $b_1 - b_2 = 0$, it would imply that $a_1 - a_2 \neq 0$. Thus, if $b_1 - b_2 = 0$, then we proceed with the rest of the proof by writing the linear constraint in (\ref{eq:S_equiv}) the form of $my - x + b = 0$ with $m = (b_1 - b_2)/(a_1 - a_2)$ and $b = - (c_1 - c_2) / (a_1 - a_2)$ and treat $x$ as $y$ and $y$ as $x$ in the following proofs.

In Figure~\ref{fig:n1=n2=1_proof_sketch_c}, we make one more linear transformation and arrive at the following form of $S$:
\begin{eqnarray}
\label{eq:S_equiv_2}
S = \left\{ x,y \in [0,1]^2 \, \left | \,  
\begin{array}{lcl} 
(x-r)y - \tau & = & 0 \\ 
m x - y + b & = & 0
\end{array} \right. \right\}
\end{eqnarray}
where $r = - \left(b_1 + \frac{a_1}{m} \right)$ and $\tau = - \left( c_1 - \frac{a_1 b}{m} \right)$. For notational simplicity, define:
\begin{align*}
    & S_1 = [0,1]^2 \cap H_1, \textup{ where } H_1 = 
\{x, y \in \mathbb{R}^2 \ | \ (x-r)y=\tau\} \\
    & S_2 = [0,1]^2 \cap l, \textup{ where } l = 
    \{(x,y) \in \mathbb{R}^2 \,|\, mx-y+b=0\}.
\end{align*}

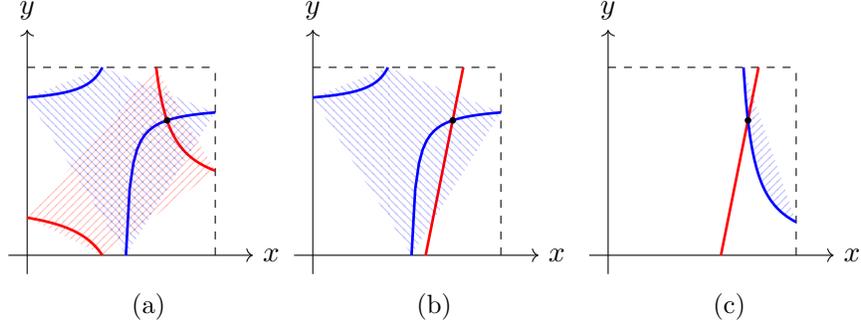
\begin{figure}[tbh!]
\centering
\begin{minipage}{0.23\textwidth}
    \centering
    \begin{tikzpicture}[scale=2.5]
    \draw[dashed] (1,0) -- (1,1);
    \draw[dashed] (0,1) -- (1,1);
    \draw[->] (-0.1,0) -- (1.2,0) node[right] {$x$};
    \draw[->] (0,-0.1) -- (0,1.2) node[above] {$y$};
    \draw[red,domain=0:0.4,line width=1pt] plot (\x,{0.06/(\x-0.6)+0.3});
    \draw[red,domain=0.06/(1-0.3)+0.6:1,line width=1pt] plot (\x,{0.06/(\x-0.6)+0.3});
    \draw[blue,domain=0:0.5-0.02/(1-0.8), line width=1pt] plot (\x,{0.8-0.02/(\x-0.5)});
    \draw[blue,domain=0.5-0.02/(0-0.8):1, line width=1pt] plot (\x,{0.8-0.02/(\x-0.5)});
    \fill (0.744,0.718) circle[radius=0.5pt];
    \fill[pattern=north east lines,pattern color=red, opacity=0.5] (0,{0.06/(0-0.6)+0.3}) -- ({0.06/(1-0.3)+0.6},1) -- (1,{0.06/(1-0.6)+0.3}) -- ({0.06/(0-0.3)+0.6},0) -- cycle;
    \fill[pattern=north west lines,pattern color=blue, opacity=0.5] (0,{0.8-0.02/(-0.5)}) -- ({0.5-0.02/(1-0.8)},1) -- (1,{0.8-0.02/(1-0.5)}) -- ({0.5-0.02/(0-0.8)},0) -- cycle;
    \end{tikzpicture}
    \subcaption{}
    \label{fig:n1=n2=1_proof_sketch_a}
\end{minipage}%
\begin{minipage}{0.23\textwidth}
    \centering
    \begin{tikzpicture}[scale=2.5]
    \draw[dashed] (1,0) -- (1,1);
    \draw[dashed] (0,1) -- (1,1);
    \draw[->] (-0.1,0) -- (1.2,0) node[right] {$x$};
    \draw[->] (0,-0.1) -- (0,1.2) node[above] {$y$};
    \draw[blue,domain=0:0.5-0.02/(1-0.8), line width=1pt] plot (\x,{0.8-0.02/(\x-0.5)});
    \draw[blue,domain=0.5-0.02/(0-0.8):1, line width=1pt] plot (\x,{0.8-0.02/(\x-0.5)});
    \draw[red,domain=0.6:0.8,line width=1pt] plot (\x,{5*\x-3});
    \fill (0.744,0.718) circle[radius=0.5pt];
    \fill[pattern=north west lines,pattern color=blue, opacity=0.5] (0,{0.8-0.02/(-0.5)}) -- ({0.5-0.02/(1-0.8)},1) -- (1,{0.8-0.02/(1-0.5)}) -- ({0.5-0.02/(0-0.8)},0) -- cycle;
    \end{tikzpicture}
    \subcaption{}
    \label{fig:n1=n2=1_proof_sketch_b}
\end{minipage}
\begin{minipage}{0.23\textwidth}
    \centering
    \begin{tikzpicture}[scale=2.5]
    \draw[dashed] (1,0) -- (1,1);
    \draw[dashed] (0,1) -- (1,1);
    \draw[->] (-0.1,0) -- (1.2,0) node[right] {$x$};
    \draw[->] (0,-0.1) -- (0,1.2) node[above] {$y$};
    \draw[red,domain=0.6:0.8,line width=1pt] plot (\x,{5*\x-3});
    \draw[blue,domain=0.72:1, line width=1pt] plot (\x,{0.06/(\x-0.66)});
    \fill (0.744,0.718) circle[radius=0.5pt];
    \begin{scope}
      \fill[pattern=north east lines,pattern color=blue, opacity=0.5, domain=0.72:1, variable=\x]
        (0.72,1) plot (\x,{0.06/(\x-0.66)}) -- plot (\x,{-50/17*\x+53/17}) -- (1,6/34) -- cycle;
    \end{scope}
    \end{tikzpicture}
    \subcaption{}
    \label{fig:n1=n2=1_proof_sketch_c}
\end{minipage}
\caption{Shaded areas in (a) represent the convex hull of each constraint in their original form with the true feasible region shown as a black dot.}
\label{fig:n1=n2=1_proof_sketch}
\end{figure}

We discuss the possible ``shapes" of the set $S$:
\begin{enumerate}
\item $\tau \neq 0$: In this case, $S_1$ intersects with $[0,1]^2$ box on only one branch of the hyperbola -- we call this branch the feasible branch. There are two subcases based on how $S_2$ intersects with $S_1$:
\begin{enumerate}
\item \label{item:tau_notzero_line} $S_2$ intersects $S_1$ at two points in $[0,1]^2$ box. In this case, $\textup{conv}(S)$ is a line segment.
\item \label{item:tau_notzero_point} $S_2$ intersects $S_1$ at one point in $[0,1]^2$ box. In this case, $\textup{conv}(S)$ is a point.
\end{enumerate}
\item $\tau = 0$: In this case, $S_1$ is the product of two lines intersected with the $[0,1]^2$ box. Further, if $r \not\in [0, \ 1]$, then $S_1$ is a line segment and there is nothing to prove. 
In case $r \in [0, \ 1]$, if $S_2$ is the line $y = 0$ or $x = r$, then there there is nothing to prove. Otherwise, similar to the previous case, there are two possibilities:
\begin{enumerate}
\item \label{item:tau_zero_line} $S_2$ intersects $S_1$ at two points in $[0,1]^2$ box. In this case, $\textup{conv}(S)$ is a line segment.
\item \label{item:tau_zero_point} $S_2$ intersects $S_1$ at one point in $[0,1]^2$ box. In this case, $\textup{conv}(S)$ is a point.
\end{enumerate}
\end{enumerate}

Figure~\ref{fig:n1=n2=1_S_shapes} shows different shapes of $S$ which are intersections of the red curve and blue line. There are at most 2 elements in $S$ and are marked by black dots. The respective convex hulls of $S_1$ are shown in red shaded area. When $\conv(S) \neq \conv(S_1) \cap S_2$, we find another aggregation $S_3$ that is represented in green with its respective convex hulls represented in green shaded area. Now we prove for different shapes of $S$ how we obtain the $\conv(S)$.
\begin{figure}[tbh!]
\centering
\begin{minipage}{0.23\textwidth}
    \centering
    \begin{tikzpicture}[scale=2.5]
    \draw[dashed] (1,0) -- (1,1);
    \draw[dashed] (0,1) -- (1,1);
    \draw[->] (-0.1,0) -- (1.2,0) node[right] {$x$};
    \draw[->] (0,-0.1) -- (0,1.2) node[above] {$y$};
    \draw[red,domain=0.1:1,line width=1pt] plot (\x,{0.2/(\x+0.1)});
    \draw[blue,domain=0.0:1,line width=1pt] plot (\x,{(-10*\x+12)/15});
    \fill (0.2,2/3) circle[radius=0.8pt];
    \fill (0.9,0.2) circle[radius=0.8pt];
    \draw[domain=0.2:0.9,line width=1.8pt] plot (\x,{-2/3*\x+4/5});
    \begin{scope}
      \fill[pattern=north east lines,pattern color=red, opacity=0.5, domain=0.1:1, variable=\x]
        (0.1,1) plot (\x,{0.2/(\x+0.1)}) -- plot (\x,{-10/11*\x+12/11}) -- (1,2/11) -- cycle;
    \end{scope}
    \end{tikzpicture}
    \subcaption*{$\conv(S)$ for \ref{item:tau_notzero_line} case}
    \label{fig:n1=n2=1_shape_conv_1a}
\end{minipage}%
\begin{minipage}{0.23\textwidth}
    \centering
    \begin{tikzpicture}[scale=2.5]
    \draw[dashed] (1,0) -- (1,1);
    \draw[dashed] (0,1) -- (1,1);
    \draw[->] (-0.1,0) -- (1.2,0) node[right] {$x$};
    \draw[->] (0,-0.1) -- (0,1.2) node[above] {$y$};
    \draw[red,domain=0.1:1,line width=1pt] plot (\x,{0.2/(\x+0.1)});
    \draw[blue,domain=0.0:1,line width=1pt] plot (\x,{(2*\x+1)/4});
    \draw[darkgreen,domain=0.0:0.6,line width=1pt] plot (\x,{-5/8/(\x-1) - 0.55});
    \fill (0.36332, 0.43166) circle[radius=0.8pt];
    \begin{scope}
      \fill[pattern=north east lines,pattern color=red, opacity=0.5, domain=0.1:1, variable=\x]
        (0.1,1) plot (\x,{0.2/(\x+0.1)}) -- plot (\x,{-10/11*\x+12/11}) -- (1,2/11) -- cycle;
    \end{scope}
    \begin{scope}
      \fill[pattern=north west lines,pattern color=darkgreen, opacity=0.5, domain=0:0.6, variable=\x]
        (0,3/40) plot (\x,{-5/8/(\x-1) - 0.55}) -- plot (\x,{37/24*\x+3/40}) -- (0.6,1) -- cycle;
    \end{scope}
    \end{tikzpicture}
    \subcaption*{$\conv(S)$ for \ref{item:tau_notzero_point} case}
    \label{fig:n1=n2=1_shape_conv_1b}
\end{minipage}%
\begin{minipage}{0.23\textwidth}
    \centering
    \begin{tikzpicture}[scale=2.5]
    \draw[dashed] (1,0) -- (1,1);
    \draw[dashed] (0,1) -- (1,1);
    \draw[->] (-0.1,0) -- (1.2,0) node[right] {$x$};
    \draw[->] (0,-0.1) -- (0,1.2) node[above] {$y$};
    \draw[red,line width=1pt] (0,0) -- (1,0);
    \draw[red,line width=1pt] (0.3,0) -- (0.3,1);
    \draw[blue,domain=0:0.8,line width=1pt] plot (\x,{-\x+0.8});
    \draw[darkgreen,domain=24/130:0.8, line width=1.5pt] plot (\x,{-0.3+0.24/(\x)});
    \fill (0.3, 0.5) circle[radius=0.8pt];
    \fill (0.8, 0) circle[radius=0.8pt];
    \draw[domain=0.3:0.8,line width=1.8pt] plot (\x,{-\x+0.8});
    \fill[pattern=north east lines,pattern color=red, opacity=0.5] (0,0) -- (0.3,1) -- (1,0) -- cycle;
    \begin{scope}
      \fill[pattern=north west lines,pattern color=darkgreen, opacity=0.5, domain=24/130:0.8, variable=\x]
        (24/130,1) plot (\x,{-0.3+0.24/(\x)}) -- plot (\x,{-13/8*\x + 13/10}) -- (0.8,0) -- cycle;
    \end{scope}
    \end{tikzpicture}
    \subcaption*{$\conv(S)$ for \ref{item:tau_zero_line} case}
    \label{fig:n1=n2=1_shape_conv_2a}
\end{minipage}%
\begin{minipage}{0.23\textwidth}
    \centering
    \begin{tikzpicture}[scale=2.5]
    \draw[dashed] (1,0) -- (1,1);
    \draw[dashed] (0,1) -- (1,1);
    \draw[->] (-0.1,0) -- (1.2,0) node[right] {$x$};
    \draw[->] (0,-0.1) -- (0,1.2) node[above] {$y$};
    \draw[red,line width=1pt] (0,0) -- (1,0);
    \draw[red,line width=1pt] (0.3,0) -- (0.3,1);
    \draw[blue,domain=0:1,line width=1pt] plot (\x,{(\x+0.4)/2});
    \draw[darkgreen,domain=0:86/135, line width=1pt] plot (\x,{-0.35-0.49/(\x-1)});
    \fill (0.3, 0.35) circle[radius=0.8pt];
    \begin{scope}
      \fill[pattern=north west lines,pattern color=darkgreen, opacity=0.5, domain=0:86/135, variable=\x]
        (0,0.14) plot (\x,{-0.35-0.49/(\x-1)}) -- plot (\x,{1.35*\x + 0.14}) -- (86/135,1) -- cycle;
    \end{scope}
    \fill[pattern=north east lines,pattern color=red, opacity=0.5] (0,0) -- (0.3,1) -- (1,0) -- cycle;
    \draw[darkgreen,domain=6/85:1, line width=1pt] plot (\x,{0.15+0.06/(\x)});
    \fill (0.3, 0.35) circle[radius=0.8pt];
    \begin{scope}
      \fill[pattern=north west lines,pattern color=darkgreen, opacity=0.5, domain=6/85:1, variable=\x]
        (6/85,1) plot (\x,{0.15+0.06/(\x)}) -- plot (\x,{-0.85*\x + 1.06}) -- (1,0.21) -- cycle;
    \end{scope}
    \end{tikzpicture}
    \subcaption*{$\conv(S)$ for \ref{item:tau_zero_point} case}
    \label{fig:n1=n2=1_shape_conv_2b}
\end{minipage}%
\caption{Different shapes of the set $S$ and their respective convex hull.}
\label{fig:n1=n2=1_S_shapes}
\end{figure}
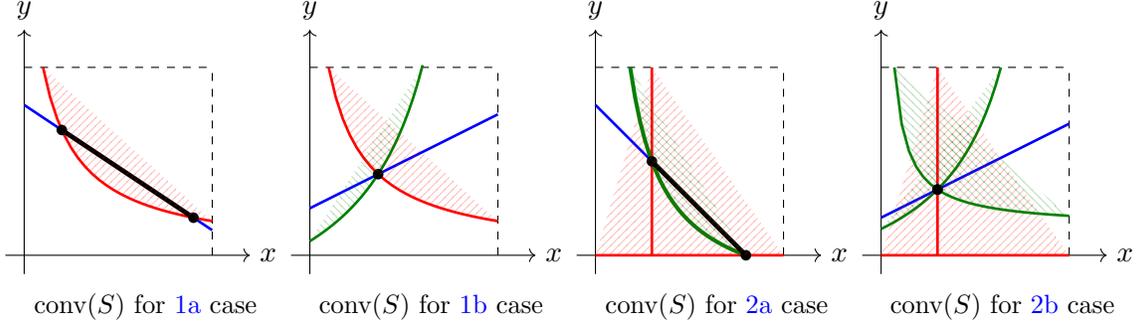

\paragraph{Proof of \ref{item:tau_notzero_line}}
If a hyperbola $S_1$ with $\tau \neq 0$ and a line $S_2$ intersect at two points and both points are on the same branch of the hyperbola $S_1$, then $\conv(S_1) \cap \conv(S_2)$ is the line segment connecting the two points belonging to $S_1 \cap S_2$ (see Figure~\ref{fig:n1=n2=1_S_shapes}). Then $\conv(S)=\conv(S_1) \cap \conv(S_2)$.
\paragraph{Proof of \ref{item:tau_notzero_point}}

A special case of \ref{item:tau_notzero_point} is when the line describing $S_2$ is a tangent to the feasible branch of the hyperbola corresponding to $S_1$.
In this case, the proof is the same as the proof for \ref{item:tau_notzero_line}.

In all other cases, the idea is that we can find at least one aggregation that will have one branch of the hyperbola intersecting with $[0,1]^2$ box, and the convex hull of this hyperbola intersects with the line segment $mx - y + b=0$ on the ``opposite side" of the line intersecting the convex hull of $S_1$. 

Let $(x^*, y^*)$ be the singleton in the set $S$. When $\tau > 0$, there are three cases for us to consider: 
\begin{enumerate}
    \item $m \geq 0$;
    \item $m < 0$ and $1-b \geq m (r + \tau)$; and
    \item $m < 0$ and $1-b \leq m (r + \tau)$.
\end{enumerate}
The intersection between $\conv(S_1)$ and the line $S_2$ is shown as a black line segment in Figure~\ref{fig:tau>0_convrightside}). 
\begin{figure}[hbt!]
\centering
\begin{minipage}{0.32\textwidth}
    \centering
    \begin{tikzpicture}[scale=2.5]
    \draw[dashed] (1,0) -- (1,1);
    \draw[dashed] (0,1) -- (1,1);
    \draw[->] (-0.1,0) -- (1.2,0) node[right] {$x$};
    \draw[->] (0,-0.1) -- (0,1.2) node[above] {$y$};
    \draw[red,domain=0.1:1,line width=1pt] plot (\x,{0.2/(\x+0.1)});
    \draw[blue,domain=0.0:1,line width=1pt] plot (\x,{(2*\x+1)/4});
    \draw[black,domain=0.36332:0.6,line width=2pt] plot (\x,{(2*\x+1)/4});
    \draw[dashed] (0.36332,0.43166) -- (0.36332,0) node[below] {$x^*$};
    \fill (0.36332, 0.43166) circle[radius=0.8pt];
    \begin{scope}
      \fill[pattern=north east lines,pattern color=red, opacity=0.5, domain=0.1:1, variable=\x]
        (0.1,1) plot (\x,{0.2/(\x+0.1)}) -- plot (\x,{-10/11*\x+12/11}) -- (1,2/11) -- cycle;
    \end{scope}
    \end{tikzpicture}
    \subcaption{\footnotesize $m \geq 0$}
    \label{fig:n1=n2=1_tau_nonzero_point_case1}
\end{minipage}%
\begin{minipage}{0.32\textwidth}
    \centering
    \begin{tikzpicture}[scale=2.5]
    \draw[dashed] (1,0) -- (1,1);
    \draw[dashed] (0,1) -- (1,1);
    \draw[->] (-0.1,0) -- (1.2,0) node[right] {$x$};
    \draw[->] (0,-0.1) -- (0,1.2) node[above] {$y$};
    \draw[red,domain=0.1:1,line width=1pt] plot (\x,{0.2/(\x+0.1)});
    \draw[blue,domain=0.0:1,line width=1pt] plot (\x,{(-5*\x+8)/10});
    \draw[dashed] (0.18211, 0.70895) -- (0.18211, 0) node[below] {$x^*$};
    \fill (0.18211, 0.70895) circle[radius=0.8pt];
    \draw[domain=0.18211:0.72,line width=2pt] plot (\x,{(-5*\x+8)/10});
    \begin{scope}
      \fill[pattern=north east lines,pattern color=red, opacity=0.5, domain=0.1:1, variable=\x]
        (0.1,1) plot (\x,{0.2/(\x+0.1)}) -- plot (\x,{-10/11*\x+12/11}) -- (1,2/11) -- cycle;
    \end{scope}
    \end{tikzpicture}
    \subcaption{\footnotesize $m < 0$, $(m+b)(1-r) \geq \tau$}
    \label{fig:n1=n2=1_tau_nonzero_point_case2}
\end{minipage}%
\begin{minipage}{0.32\textwidth}
    \centering
    \begin{tikzpicture}[scale=2.5]
    \draw[dashed] (1,0) -- (1,1);
    \draw[dashed] (0,1) -- (1,1);
    \draw[->] (-0.1,0) -- (1.2,0) node[right] {$x$};
    \draw[->] (0,-0.1) -- (0,1.2) node[above] {$y$};
    \draw[red,domain=0.1:1,line width=1pt] plot (\x,{0.2/(\x+0.1)});
    \draw[blue,domain=0.5:1,line width=1pt] plot (\x,{(-10*\x+10)/5});
    \draw[dashed] (0.9, 0.2) -- (0.9, 0) node[below] {$x^*$};
    \fill (0.9, 0.2) circle[radius=0.8pt];
    \draw[domain=0.83:0.9,line width=2pt] plot (\x,{(-10*\x+10)/5});
    \begin{scope}
      \fill[pattern=north east lines,pattern color=red, opacity=0.5, domain=0.1:1, variable=\x]
        (0.1,1) plot (\x,{0.2/(\x+0.1)}) -- plot (\x,{-10/11*\x+12/11}) -- (1,2/11) -- cycle;
    \end{scope}
    \end{tikzpicture}
    \subcaption{\footnotesize $m < 0$, $(m+b)(1-r) \leq \tau$}
    \label{fig:n1=n2=1_tau_nonzero_point_case3}
\end{minipage}%
\caption{Red shaded area represents $\conv(S_1)$ and the blue line is $\conv(S_2) = S_2$. The black point represents $\conv(S)$ and the black line segment represents $\conv(S_1) \cap S_2$.}
\label{fig:tau>0_convrightside}
\end{figure}
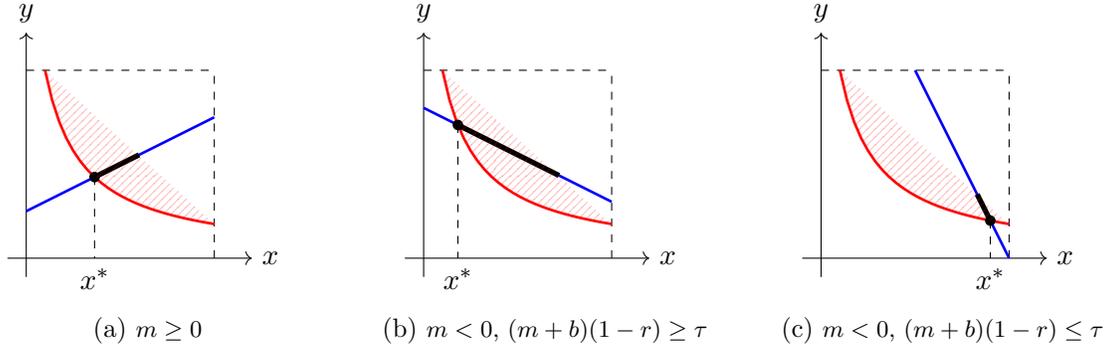

\begin{enumerate}
\item $m \geq 0$: We can make the following aggregation from \eqref{eq:S_equiv_2} by multiplying $(1 + \epsilon -r)$ to the second equation and adding it to the first equation:
\begin{eqnarray*}
\begin{array}{rrlcl} 
    & & (x- r) y - \tau & = & 0 \\ 
    + & (1+ \epsilon - r) \quad \times & mx - y + b & = & 0 \\
    = & & \left( x - \tilde r \right) \left( y - \tilde s \right) - \tilde \tau & = & 0
\end{array}
\end{eqnarray*}
where $\tilde r = 1 + \epsilon$, $\tilde s = (r-1-\epsilon)m$ and $\tilde \tau = \tau + (r-1-\epsilon)(b+m+m\epsilon)$. The equation $\tilde \tau = 0$ is a quadratic function of $\epsilon$; hence, we can always find $\epsilon \geq 0$ such that $\tilde \tau \neq 0$.
Furthermore, note that for the line $l$ and $H_1$ to have an intersection in $[0,1]^2$ box given that $m \geq 0$ and $\tau > 0$, it must be that the $y$-coordinate of $l$ is greater than or equal to the $y$-coordinate of $H_1$ at $x=1$:
$$m + b \geq \frac{\tau}{1-r}.$$
This implies that $(m+b)(1-r) \geq \tau$ as $r<1$ for $H_1$ to have an intersection in $[0,1]^2$ box. Then, we have: 
\begin{align*}
    \tilde \tau & = \tau - (1+\epsilon - r) (m + m\epsilon +b) \leq \tau - (1 - r) (m + b) \leq 0.
\end{align*}
Since $\tilde \tau \neq 0$, we obtain $\tilde \tau < 0.$

We define the following set:
$$S_3 = [0,\ 1]^2 \cap H_3, \textup{ where } H_3 = 
\{x, y \in \mathbb{R}^2 \ | \ (x-\tilde r)(y-\tilde s) = \tilde \tau \}.$$ 
Since $S_3$ is defined by the branch of $H_3$ satisfying $x < \tilde r$, henceforth, we let $H_3 = 
\{x, y \in \mathbb{R}^2 \ | \ (x-\tilde r)(y-\tilde s) = \tilde \tau, x < \tilde r\}$.

We note that $\conv(S_1) \cap S_2$ is on the line $l$ along the direction that $x \geq x^*$ as illustrated in Figure~\ref{fig:n1=n2=1_tau_nonzero_point_case1}. 
We claim that $\conv(S_3) \cap S_2$ is on the line $l$ along the direction that $x \leq x^*$ so that $\conv(S_1) \cap S_2 \cap \conv(S_3)$ is the singleton $(x^*,y^*)$.
Equivalently, we would like to show that the half-line:
\begin{eqnarray}
\label{eq:h3_l_intersect_case1}
\{(x,y) \in l\,|\, x > x^* \} \cap \textup{conv}(H_3) = \emptyset.
\end{eqnarray} 
Note that following Observation~\ref{observation:hyperbola_geometry}, only one branch of $H_3$ intersects $[0,1]^2$ box. 
Since we have that $\tilde \tau < 0$, \eqref{eq:h3_l_intersect_case1} is implied by: 
\begin{eqnarray}
\label{eq:min_h3_line}
mx + b < \textup{min}\{ y\,|\, (x,y) \in \textup{conv}(H_3)\} = \tilde s + \frac{\tilde \tau}{x-\tilde r} \quad \forall \ x > x^*.
\end{eqnarray}
We first show that \eqref{eq:min_h3_line} holds for some $x \in (x^*, 1)$. Since $S = S_1 \cap S_2 = S_3 \cap S_2$, there is no other $(x,y) \neq (x^*,y^*)$ in the $[0,1]^2$ box such that $(x,y) \in H_3 \cap l$. Furthremore, $\tilde s + \frac{\tilde \tau}{x - \tilde r}$ is convex and monotonically increasing, so if we find $x \in (x^*,1)$ such that \eqref{eq:min_h3_line} holds, \eqref{eq:min_h3_line} holds for all $x > x^*$. Otherwise, by the Mean Value Theorem, $H_3$ and $l$ should intersect again in $[0,1]^2$ box or \eqref{eq:min_h3_line} does not hold for all $x \in (x^*,1)$.

Let $\tilde x$ be the $x$-coordinate of $H_3$ at $y=1$. We want to evaluate the $y$-coordinate of $l$ at $\tilde x$ and check if it is less than 1. Let this $y$-coordinate be $\tilde y$.
\begin{align*}
    \tilde x & = \tilde r + \frac{\tilde \tau}{1-\tilde s} 
    = 1 + \epsilon + \frac{\tau + (r-1-\epsilon)(b+m+m\epsilon)}{1+(1+\epsilon-r)m} \\
    \tilde y & = m \tilde x + b \\
    & = m\left(1 + \epsilon + \frac{\tau + (r-1-\epsilon)(b+m+m\epsilon)}{1 +(1+\epsilon-r)m} \right) + b \\
    & = \frac{m}{1 +(1+\epsilon-r)m} ( 1 + \epsilon +(1+\epsilon-r)(1+\epsilon)m + \tau + (r-1-\epsilon)(b+m+m\epsilon)) + b \\
    & = \frac{m}{1 +(1+\epsilon-r)m} ( 1 + \epsilon + \tau + (r - 1-\epsilon)b) + b \\
    & = \frac{m}{1 +(1+\epsilon-r)m} ( 1 + \epsilon + \tau + (r - 1-\epsilon)b) + \frac{b(1 + (1+\epsilon-r)m)}{1+(1+\epsilon-r)m} \\
    & = \frac{b + (1 + \epsilon + \tau)m}{1 + (1+\epsilon-r)m} \\
    & \leq \frac{1 + (1 + \epsilon - r)m}{1 + (1+\epsilon-r)m} = 1
\end{align*}
where the last inequality follows from the fact that $m(\tau + r) \leq 1-b$. When $m=0$, the line equation is $y=b$ and it can only have an intersection with $[0,1]^2$ box if $b \in [0,1]$ so $1-b \geq 0$ and the inequality is satisfied. When $m \neq 0$, $\tau + r$ is the $x$-coordinate of $H_1$ at $y=1$ which is less than or equal to the $x$-coordinate of $l$ at $y=1$ (i.e., $\frac{1-b}{m}$).

\begin{figure}[hbt!]
\centering
\begin{minipage}{0.32\textwidth}
    \centering
    \begin{tikzpicture}[scale=2.5]
    \draw[dashed] (1,0) -- (1,1);
    \draw[dashed] (0,1) -- (1,1);
    \draw[->] (-0.1,0) -- (1.2,0) node[right] {$x$};
    \draw[->] (0,-0.1) -- (0,1.2) node[above] {$y$};
    \draw[red,domain=0.1:1,line width=1pt] plot (\x,{0.2/(\x+0.1)});
    \draw[blue,domain=0.0:1,line width=1pt] plot (\x,{(2*\x+1)/4});
    \draw[black,domain=0.36332:1,line width=2pt] plot (\x,{(2*\x+1)/4});
    \draw[dashed] (0.36332,0.43166) -- (0.36332,0) node[below] {$x^*$};
    \fill (0.36332, 0.43166) circle[radius=0.8pt];
    \begin{scope}
      \fill[pattern=north east lines,pattern color=red, opacity=0.5, domain=0.1:1, variable=\x]
        (0.1,1) plot (\x,{0.2/(\x+0.1)}) -- (1,2/11) -- (1,1) -- cycle;
    \end{scope}
    \end{tikzpicture}
    \subcaption{\footnotesize $\conv(H_1)$ and $S_2$}
\end{minipage}%
\begin{minipage}{0.32\textwidth}
    \centering
    \begin{tikzpicture}[scale=2.5]
    \draw[dashed] (1,0) -- (1,1);
    \draw[dashed] (0,1) -- (1,1);
    \draw[->] (-0.1,0) -- (1.2,0) node[right] {$x$};
    \draw[->] (0,-0.1) -- (0,1.2) node[above] {$y$};
    \draw[blue,domain=0.0:1,line width=1pt] plot (\x,{(2*\x+1)/4});
    \draw[darkgreen,domain=0.0:0.6,line width=1pt] plot (\x,{-5/8/(\x-1) - 0.55});
    \draw[black,domain=0:0.36332,line width=2pt] plot (\x,{(2*\x+1)/4});
    \draw[dashed] (0.36332,0.43166) -- (0.36332,0) node[below] {$x^*$};
    \fill (0.36332, 0.43166) circle[radius=0.8pt];
    \begin{scope}
      \fill[pattern=north west lines,pattern color=darkgreen, opacity=0.5, domain=0:0.6, variable=\x]
        (0,3/40) plot (\x,{-5/8/(\x-1) - 0.55}) -- (0.6,1) -- (0,1) -- cycle;
    \end{scope}
    \end{tikzpicture}
    \subcaption{\footnotesize $\conv(H_3)$ and $S_2$}
\end{minipage}%
\caption{When $\tau > 0$ and $m > 0$, the branch of hyperbola $H_1$ intersecting with $[0,1]^2$ box is convex and monotonically decreasing whereas the branch of hyperbola $H_3$ intersecting with $[0,1]^2$ box is convex and monotonically increasing. Hence, $\min \{ y \ | \ (x,y) \in \conv(H_3) \}$ is precisely the curve represented by $H_3$ which always lies above the line $l$ for $x > x^*$. }
\label{fig:tau>0_m>0_case}
\end{figure}
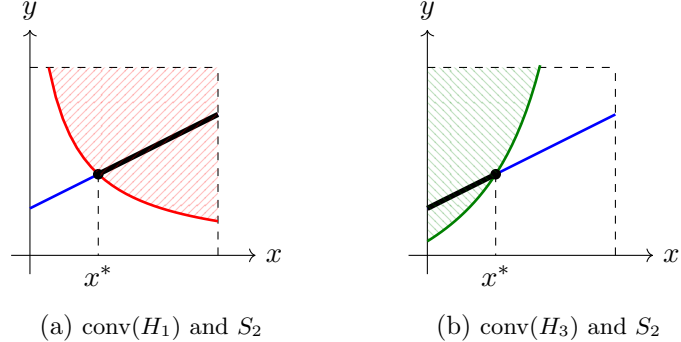

\item $m < 0$ and $(m+b)(1-r) \geq \tau$: We make the same aggregation as above and define $S_3$ and $H_3$ in the same way.
We note that:
\begin{align*}
    \tilde \tau = \tau - (1+\epsilon - r) (m + m\epsilon + b) \leq \tau - (1-r)(m+b) \leq 0.
\end{align*}
We claim that $\conv(S_3) \cap S_2$ is on the line $l$ along the direction that $x \leq x^*$ so that $\conv(S_1) \cap S_2 \cap \conv(S_3)$ is the singleton $(x^*,y^*)$.
Equivalently, we would like to show that the half-line:
\begin{eqnarray}
\label{eq:h3_l_intersect_case2}
\{(x,y) \in l\,|\, x > x^* \} \cap \textup{conv}(H_3) = \emptyset.
\end{eqnarray} 
Note that following Observation~\ref{observation:hyperbola_geometry}, only one branch of $H_3$ intersects $[0,1]^2$ box. 
Since we conclude that $\tilde \tau < 0$, \eqref{eq:h3_l_intersect_case2} implied by: 
\begin{eqnarray}
\label{eq:min_h3_line_case2}
mx + b < \textup{min}\{ y\,|\, (x,y) \in \textup{conv}(H_3)\} = \tilde s + \frac{\tilde \tau}{x-\tilde r} \quad \forall \ x > x^*.  
\end{eqnarray}
which holds because $m < 0$ and $\tilde s + \frac{\tilde \tau}{x-\tilde r}$ is monotonically increasing. Figure~\ref{fig:tau>0_m<0_case1} illustrates the proof.

\begin{figure}[hbt!]
\centering
\begin{minipage}{0.32\textwidth}
    \centering
    \begin{tikzpicture}[scale=2.5]
    \draw[dashed] (1,0) -- (1,1);
    \draw[dashed] (0,1) -- (1,1);
    \draw[->] (-0.1,0) -- (1.2,0) node[right] {$x$};
    \draw[->] (0,-0.1) -- (0,1.2) node[above] {$y$};
    \draw[red,domain=0.1:1,line width=1pt] plot (\x,{0.2/(\x+0.1)});
    \draw[blue,domain=0.0:1,line width=1pt] plot (\x,{(-5*\x+8)/10});
    \draw[dashed] (0.18211, 0.70895) -- (0.18211, 0) node[below] {$x^*$};
    \fill (0.18211, 0.70895) circle[radius=0.8pt];
    \draw[domain=0.18211:1,line width=2pt] plot (\x,{(-5*\x+8)/10});
    \begin{scope}
      \fill[pattern=north east lines,pattern color=red, opacity=0.5, domain=0.1:1, variable=\x]
        (0.1,1) plot (\x,{0.2/(\x+0.1)}) -- (1,2/11) -- (1,1) -- cycle;
    \end{scope}
    \end{tikzpicture}
    \subcaption{\footnotesize $\conv(H_1)$ and $l$}
\end{minipage}%
\begin{minipage}{0.32\textwidth}
    \centering
    \begin{tikzpicture}[scale=2.5]
    \draw[dashed] (1,0) -- (1,1);
    \draw[dashed] (0,1) -- (1,1);
    \draw[->] (-0.1,0) -- (1.2,0) node[right] {$x$};
    \draw[->] (0,-0.1) -- (0,1.2) node[above] {$y$};
    \draw[darkgreen,domain=0:1-13/45,line width=1pt] plot (\x,{-0.13/(\x-1)+0.55});
    \draw[blue,domain=0.0:1,line width=1pt] plot (\x,{(-5*\x+8)/10});
    \draw[dashed] (0.18211, 0.70895) -- (0.18211, 0) node[below] {$x^*$};
    \fill (0.18211, 0.70895) circle[radius=0.8pt];
    \draw[domain=0:0.2,line width=2pt] plot (\x,{(-5*\x+8)/10});
    \begin{scope}
      \fill[pattern=north east lines,pattern color=darkgreen, opacity=0.5, domain=0:32/45, variable=\x]
        (0,0.68) plot (\x,{-0.13/(\x-1)+0.55}) -- (32/45,1) -- (0,1) -- cycle;
    \end{scope}
    \end{tikzpicture}
    \subcaption{\footnotesize $\conv(H_3)$ and $l$}
\end{minipage}%
\caption{When $\tau > 0$, $m < 0$ and $(m+b)(1-r) \geq \tau$, the branch of hyperbola $H_1$ intersecting with $[0,1]^2$ box is convex and monotonically decreasing whereas the branch of hyperbola $H_3$ intersecting with $[0,1]^2$ box is convex and monotonically increasing. Hence, $\min \{ y \ | \ (x,y) \in \conv(H_3) \}$ is precisely the curve represented by $H_3$ which always lies above the line $l$ for $x > x^*$. }
\label{fig:tau>0_m<0_case1}
\end{figure}

\item $m < 0$ and $(m+b)(1-r) \leq \tau$: We make the following aggregation from \eqref{eq:S_equiv_2} by multiplying $-(1+\epsilon)/m$ to the second equation and adding it to the first equation:
\begin{eqnarray*}
\begin{array}{rrlcl} 
    & & (x- r) y - \tau & = & 0 \\ 
    + & - \frac{1+\epsilon}{m} \quad \times & mx - y + b & = & 0 \\
    = & & \left( x - \tilde r \right) \left( y - \tilde s \right) - \tilde \tau & = & 0
\end{array}
\end{eqnarray*}
where $\tilde r = r - \frac{1+\epsilon}{m}$, $\tilde s = 1+ \epsilon$ and $\tilde \tau = \tau + (1+\epsilon) \left( r - \frac{1+\epsilon-b}{m} \right)$. Again, we can always choose $\epsilon \geq 0$ such that $\tilde \tau \neq 0$. This time, we define $S_3$ and $H_3$ as follows:
$$S_3 = [0,\ 1]^2 \cap H_3, \textup{ where } H_3 = 
\{x, y \in \mathbb{R}^2 \ | \ (x-\tilde r)(y-\tilde s) = \tilde \tau, y < \tilde s\}.$$ 
We note that $\conv(S_1) \cap S_2$ is on the line $l$ along with the direction that $x \geq x^*$ as illustrated in Figure~\ref{fig:n1=n2=1_tau_nonzero_point_case3} if $1-b \geq m (r + \tau)$ which means that the $x$-coordinate of $H_3$ at $y=1$ is greater than or equal to the $x$-coordinate of $l$ at $y=1$. Then, we have:
\begin{align*}
    \tilde \tau = \tau + (1 + \epsilon) \left( r - \frac{1 + \epsilon - b}{m} \right) \leq (1 + \epsilon) \left( \tau + r - \frac{1-b}{m} \right) \leq 0.
\end{align*}
The last inequality follows from $1+ \epsilon > 0$ and $\tau + r \leq \frac{1-b}{m}$.
We claim that $\conv(S_3) \cap S_2$ is on the line $l$ along the direction that $x \geq x^*$ so that $\conv(S_1) \cap S_2 \cap \conv(S_3)$ is the singleton $(x^*,y^*)$.
Equivalently, we would like to show that the half-line:
\begin{eqnarray}
\label{eq:h3_l_intersect_case3}
\{(x,y) \in l\,|\, x < x^* \} \cap \textup{conv}(H_3) = \emptyset.
\end{eqnarray} 
Note that following Observation~\ref{observation:hyperbola_geometry}, only one branch of $H_3$ intersects $[0,1]^2$ box. 
Since we conclude that $\tilde \tau < 0$, \eqref{eq:h3_l_intersect_case3} implied by: 
\begin{eqnarray}
\label{eq:max_h3_line}
mx + b > \textup{max}\{ y\,|\, (x,y) \in \textup{conv}(H_3)\} = \tilde s + \frac{\tilde \tau}{x-\tilde r} \quad \forall \ x < x^*.  
\end{eqnarray}
which holds because $m < 0$ and $\tilde s + \frac{\tilde \tau}{x-\tilde r}$ is monotonically increasing. Figure~\ref{fig:tau>0_m<0_case2} illustrates the proof.

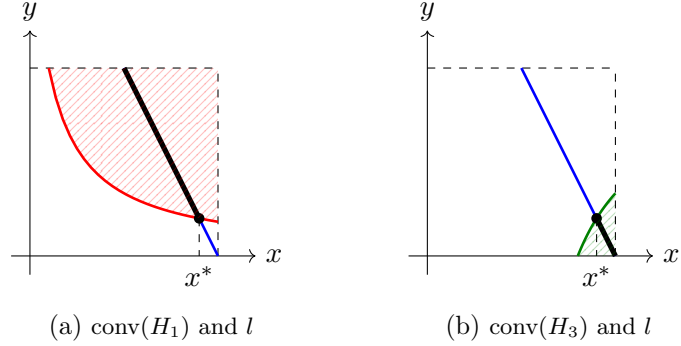
\begin{figure}[hbt!]
\centering
\begin{minipage}{0.32\textwidth}
    \centering
    \begin{tikzpicture}[scale=2.5]
    \draw[dashed] (1,0) -- (1,1);
    \draw[dashed] (0,1) -- (1,1);
    \draw[->] (-0.1,0) -- (1.2,0) node[right] {$x$};
    \draw[->] (0,-0.1) -- (0,1.2) node[above] {$y$};
    \draw[red,domain=0.1:1,line width=1pt] plot (\x,{0.2/(\x+0.1)});
    \draw[blue,domain=0.5:1,line width=1pt] plot (\x,{(-10*\x+10)/5});
    \draw[dashed] (0.9, 0.2) -- (0.9, 0) node[below] {$x^*$};
    \fill (0.9, 0.2) circle[radius=0.8pt];
    \draw[domain=0.5:0.9,line width=2pt] plot (\x,{(-10*\x+10)/5});
    \begin{scope}
      \fill[pattern=north east lines,pattern color=red, opacity=0.5, domain=0.1:1, variable=\x]
        (0.1,1) plot (\x,{0.2/(\x+0.1)}) -- (1,2/11) -- (1,1) -- cycle;
    \end{scope}
    \end{tikzpicture}
    \subcaption{\footnotesize $\conv(H_1)$ and $l$}
\end{minipage}%
\begin{minipage}{0.32\textwidth}
    \centering
    \begin{tikzpicture}[scale=2.5]
    \draw[dashed] (1,0) -- (1,1);
    \draw[dashed] (0,1) -- (1,1);
    \draw[->] (-0.1,0) -- (1.2,0) node[right] {$x$};
    \draw[->] (0,-0.1) -- (0,1.2) node[above] {$y$};
    \draw[darkgreen,domain=0.8:1,line width=1pt] plot (\x,{1-0.4/(\x-0.4)});
    \draw[blue,domain=0.5:1,line width=1pt] plot (\x,{(-10*\x+10)/5});
    \draw[dashed] (0.9, 0.2) -- (0.9, 0) node[below] {$x^*$};
    \fill (0.9, 0.2) circle[radius=0.8pt];
    \draw[domain=0.9:1,line width=2pt] plot (\x,{(-10*\x+10)/5});
    \begin{scope}
      \fill[pattern=north east lines,pattern color=darkgreen, opacity=0.5, domain=0.8:1, variable=\x]
        (0.8,0) plot (\x,{1-0.4/(\x-0.4)}) -- (1,1/3) -- (1,0) -- cycle;
    \end{scope}
    \end{tikzpicture}
    \subcaption{\footnotesize $\conv(H_3)$ and $l$}
\end{minipage}%
\caption{When $\tau > 0$, $m < 0$ and $(m+b)(1-r) \leq \tau$, the branch of hyperbola $H_1$ intersecting with $[0,1]^2$ box is convex and monotonically decreasing whereas the branch of hyperbola $H_3$ intersecting with $[0,1]^2$ box is concave and monotonically increasing. Hence, $\max \{ y \ | \ (x,y) \in \conv(H_3) \}$ is precisely the curve represented by $H_3$ which always lies above the line $l$ for $x > x^*$. }
\label{fig:tau>0_m<0_case2}
\end{figure}
\end{enumerate}
So far, we have only discussed the case $\tau > 0$. When $\tau < 0$, we can make the same argument as above. Note that we can consider $(x-r)y = \tau$ as $(x-r)(-y)=-\tau$ and define $\hat y = - y$ with the new variable bounds or box in $[0,1] \times [-1, 0]$ box.

\paragraph{Proof of \ref{item:tau_zero_line}}
In case $r \in \{0, 1\}$, we see that $\conv(S) = \conv(S_1) \cap S_2$ in Figure~\ref{fig:tau0_r0r1}.
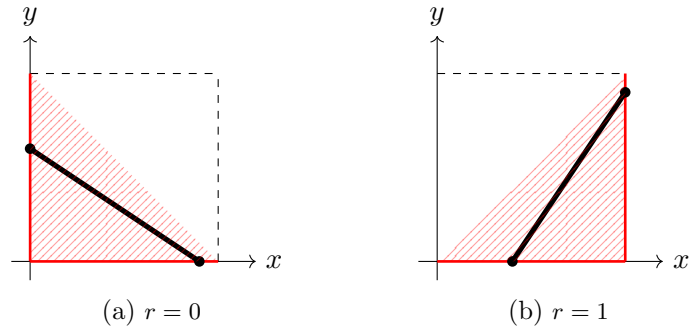
\begin{figure}[hbt!]
\centering
\begin{minipage}{0.32\textwidth}
    \centering
    \begin{tikzpicture}[scale=2.5]
        \draw[dashed] (1,0) -- (1,1);
        \draw[dashed] (0,1) -- (1,1);
        \draw[->] (-0.1,0) -- (1.2,0) node[right] {$x$};
        \draw[->] (0,-0.1) -- (0,1.2) node[above] {$y$};
        \draw[red, line width = 1pt] (0,0) -- (0,1) node[above] {};
        \draw[red, line width = 1pt] (0,0) -- (1,0) node[above] {};
        \draw[black,domain=0:0.9, line width=2pt] plot (\x,{0.6-2/3*\x});
        \fill (0, 0.6) circle[radius=0.8pt];
        \fill (0.9, 0) circle[radius=0.8pt];
        \fill[pattern=north east lines,opacity=0.6, pattern color = red] (0,0) -- (1,0) -- (0,1) -- cycle;
    \end{tikzpicture}   
    \subcaption{\footnotesize $r = 0$}
\end{minipage}
\begin{minipage}{0.32\textwidth}
    \centering
    \begin{tikzpicture}[scale=2.5]
        \draw[dashed] (1,0) -- (1,1);
        \draw[dashed] (0,1) -- (1,1);
        \draw[->] (-0.1,0) -- (1.2,0) node[right] {$x$};
        \draw[->] (0,-0.1) -- (0,1.2) node[above] {$y$};
        \draw[red, line width = 1pt] (1,0) -- (1,1) node[above] {};
        \draw[red, line width = 1pt] (0,0) -- (1,0) node[above] {};
        \draw[black,domain=0.4:1, line width=2pt] plot (\x,{-0.6+3/2*\x});
        \fill (0.4, 0) circle[radius=0.8pt];
        \fill (1, 0.9) circle[radius=0.8pt];
        \fill[pattern=north east lines,opacity=0.6, pattern color = red] (0,0) -- (1,0) -- (1,1) -- cycle;
    \end{tikzpicture}
    \subcaption{\footnotesize $r = 1$}
\end{minipage}
\caption{When $r = 0$ or $r = 1$ and two points intersect between $S_1$ and $S_2$, $\conv(S_1)$ is a triangle that contains $S_2$. Hence the convex hull of $S$ is $S_2$.}
\label{fig:tau0_r0r1}
\end{figure}
\begin{figure}[hbt!]
\centering
\begin{minipage}{0.32\textwidth}
    \centering
    \begin{tikzpicture}[scale=2.5]
        \draw[dashed] (1,0) -- (1,1);
        \draw[dashed] (0,1) -- (1,1);
        \draw[->] (-0.1,0) -- (1.2,0) node[right] {$x$};
        \draw[->] (0,-0.1) -- (0,1.2) node[above] {$y$};
        \draw[red, line width = 1pt] (0.3,0) -- (0.3,1) node[above] {};
        \draw[red, line width = 1pt] (0,0) -- (1,0) node[above] {};
        \draw[blue,domain=0:0.8, line width=1pt] plot (\x,{0.8-\x});
        \draw[black,domain=0.3:0.8, line width=2pt] plot (\x,{0.8-\x});
        \fill (0.3, 0.5) circle[radius=0.8pt];
        \fill (0.8, 0) circle[radius=0.8pt];
        \fill[pattern=north east lines,opacity=0.6, pattern color = red] (0,0) -- (1,0) -- (0.3,1) -- cycle;
    \end{tikzpicture}
    \subcaption{\footnotesize $r \in (0, 1)$, $\conv(S_1)$ and $S_2$}
\end{minipage}
\begin{minipage}{0.32\textwidth}
    \centering
    \begin{tikzpicture}[scale=2.5]
        \draw[dashed] (1,0) -- (1,1);
        \draw[dashed] (0,1) -- (1,1);
        \draw[->] (-0.1,0) -- (1.2,0) node[right] {$x$};
        \draw[->] (0,-0.1) -- (0,1.2) node[above] {$y$};
        \draw[blue,domain=0:0.8, line width=1pt] plot (\x,{0.8-\x});
        \draw[darkgreen,domain=24/130:0.8, line width=1pt] plot (\x,{-0.3+0.24/(\x)});
        \draw[black,domain=0.3:0.8, line width=2pt] plot (\x,{0.8-\x});
        \fill (0.3, 0.5) circle[radius=0.8pt];
        \fill (0.8, 0) circle[radius=0.8pt];
        \begin{scope}
          \fill[pattern=north east lines,pattern color=darkgreen, opacity=0.5, domain=24/130:0.8, variable=\x]
            (24/130,1) plot (\x,{-0.3+0.24/(\x)}) -- plot (\x,{-13/8*\x + 13/10}) -- (0.8,0) -- cycle;
        \end{scope}
    \end{tikzpicture}
    \subcaption{\footnotesize $r \in (0, 1)$, $\conv(S_3)$ and $S_2$}
\end{minipage}
\caption{When $r \in (0,1)$ and we convert to a hyperbola with $\tilde \tau \neq 0$ as in the right-hand side figure, the same proof can be applied as in the case~\ref{item:tau_notzero_line}.}
\label{fig:tau0_new_hyperbola}
\end{figure}

When $r \in (0,1)$, we can create the following aggregation where it intersects only at one branch and returns to the case similar to \ref{item:tau_notzero_line}.
\begin{eqnarray*}
\begin{array}{rrlcl} 
    & & (x- r) y & = & 0 \\ 
    - & (r + \epsilon) \quad \times & mx - y + b & = & 0 \\
    = & & \left( x - \tilde r \right) \left( y - \tilde s \right) - \tilde \tau & = & 0
\end{array}
\end{eqnarray*}
where $\tilde r = - \epsilon$, $\tilde s = (r + \epsilon) m$ and $\tilde \tau = (r+\epsilon) (b - m \epsilon)$. Again, we can always choose $\epsilon \geq 0$ such that $\tilde \tau \neq 0$ unless both $m = b = 0$. But then, if $m = b = 0$, the equation $mx - y +b = 0$ is equivalent to $y = 0$ and $\conv(S) = \conv(S_1) \cap S_2$. When we have $(x-\tilde r) (y-\tilde s) = \tilde \tau$ form, we note that only one branch of this hyperbola intersects $[0,1]^2$ box and we can apply the same proof as in the case \ref{item:tau_notzero_line}.

\paragraph{Proof of \ref{item:tau_zero_point}}
We consider three aggregations as below:

Aggregation 1:
\begin{eqnarray*}
\begin{array}{rrlcl} 
    & & (x- r) y & = & 0 \\ 
    - & r \quad \times & mx - y + b & = & 0 \\
    = & & x - \left( y - rm\right) & = & rb
\end{array}
\end{eqnarray*}

Aggregation 2:
\begin{eqnarray*}
\begin{array}{rrlcl} 
    & & (x- r) y & = & 0 \\ 
    + & (1 - r) \quad \times & mx - y + b & = & 0 \\
    = & & \left( x - 1 \right) \left( y + (1-r) \right) & = & - (1-r)(b+m)
\end{array}
\end{eqnarray*}

Aggregation 3:
\begin{eqnarray*}
\begin{array}{rrlcl} 
    & & (x- r) y & = & 0 \\ 
    - & \frac{1}{m} \quad \times & mx - y + b & = & 0 \\
    = & & \left( x - \left( r - \frac{1}{m} \right) \right) \left( y - 1 \right) - & = & r + \frac{b+1}{m}.
\end{array}
\end{eqnarray*}
Note that all three aggregations result in hyperbolas that only have one branch intersection in $[0,1]^2$.
Let $\tilde \tau = rb$, $\bar \tau = - (1-r)(b+m)$ and $\hat \tau = r + \frac{b+1}{m}$.
Then note that either $\tilde \tau \neq 0$, $\bar \tau \neq 0$, $\hat \tau \neq 0$ or $\conv(S) = \conv(S_1) \cap S_2$.
Suppose by contradiction that $\tilde \tau = \bar \tau = \hat \tau = 0$. 
Then from $\tilde \tau = 0$, either $r = 0$ or $b = 0$. If $r = 0$ but $b \neq 0$, $\bar \tau = 0$ implies that $m = -b$. $\hat \tau = 0$ then implies that $b = -1$. Together, it implies the line equation $l$ is $x - y - 1 = 0$. But then, this equation intersects with $[0,1]^2$ box only at $(x,y)=(1,1)$; hence, $\conv(S) = \conv(S_1) \cap S_2$. Now if $b = 0$ (irrespective of $r$ value), $\bar \tau = 0$ implies that $m = 0$. The line equation $l$ becomes $y = 0$ which brings us back to the case~\ref{item:tau_zero_line}. Hence, it must be that at least one of $\tilde \tau$, $\bar \tau$ or $\hat \tau$ is nonzero unless we have a trivial case where $S = \{(1,0)\}$ and $\conv(S) = \conv(S_1) \cap S_2$.

Suppose that $\bar \tau \neq 0$. By doing a change of variable, we can give a similar argument on the following set as we have given for the set $S$ from \eqref{eq:S_equiv_2}:
\begin{eqnarray}
\label{eq:S_equiv_3}
S = \left\{ x,y \in [0,1]^2 \, \left | \,  
\begin{array}{lcl} 
(x-1)(y + (1- \bar r)) & = & \bar \tau\\ 
m x - y + b & = & 0
\end{array} \right. \right\}
\end{eqnarray}
where $\bar r = -(1-r)$.
Figure~\ref{fig:tau0_point_subfigure_2} shows the new hyperbola found in the form of $(x-1)(y+(1-\bar r)) = \bar \tau$. Note linear aggregations of the constraint represent the same set as \eqref{eq:S_equiv_2}. Starting from \eqref{eq:S_equiv_3}, we can obtain another set as in Figure~\ref{fig:tau0_point_subfigure_3}.
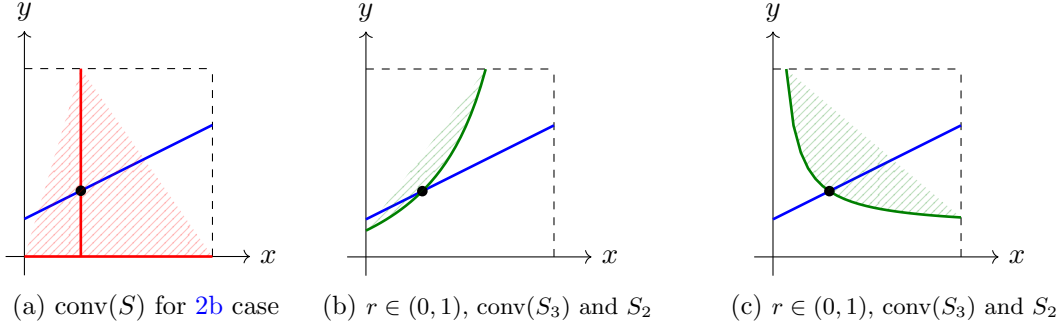
\begin{figure}[hbt!]
\centering
\begin{minipage}{0.23\textwidth}
    \centering
    \begin{tikzpicture}[scale=2.5]
    \draw[dashed] (1,0) -- (1,1);
    \draw[dashed] (0,1) -- (1,1);
    \draw[->] (-0.1,0) -- (1.2,0) node[right] {$x$};
    \draw[->] (0,-0.1) -- (0,1.2) node[above] {$y$};
    \draw[red,line width=1pt] (0,0) -- (1,0);
    \draw[red,line width=1pt] (0.3,0) -- (0.3,1);
    \draw[blue,domain=0:1,line width=1pt] plot (\x,{(\x+0.4)/2});
    \fill (0.3, 0.35) circle[radius=0.8pt];
    \fill[pattern=north east lines,pattern color=red, opacity=0.5] (0,0) -- (0.3,1) -- (1,0) -- cycle;
    \end{tikzpicture}
    \subcaption{$\conv(S)$ for \ref{item:tau_zero_point} case}
    \label{fig:tau0_point_subfigure_1}
\end{minipage}%
\begin{minipage}{0.32\textwidth}
    \centering
    \begin{tikzpicture}[scale=2.5]
        \draw[dashed] (1,0) -- (1,1);
        \draw[dashed] (0,1) -- (1,1);
        \draw[->] (-0.1,0) -- (1.2,0) node[right] {$x$};
        \draw[->] (0,-0.1) -- (0,1.2) node[above] {$y$};
        \draw[blue,domain=0:1,line width=1pt] plot (\x,{(\x+0.4)/2});
        \draw[darkgreen,domain=0:86/135, line width=1pt] plot (\x,{-0.35-0.49/(\x-1)});
        \fill (0.3, 0.35) circle[radius=0.8pt];
        \begin{scope}
          \fill[pattern=north east lines,pattern color=darkgreen, opacity=0.5, domain=0:86/135, variable=\x]
            (0,0.14) plot (\x,{-0.35-0.49/(\x-1)}) -- plot (\x,{1.35*\x + 0.14}) -- (86/135,1) -- cycle;
        \end{scope}
    \end{tikzpicture}
    \subcaption{\footnotesize $r \in (0, 1)$, $\conv(S_3)$ and $S_2$}
    \label{fig:tau0_point_subfigure_2}
\end{minipage}
\begin{minipage}{0.32\textwidth}
    \centering
    \begin{tikzpicture}[scale=2.5]
        \draw[dashed] (1,0) -- (1,1);
        \draw[dashed] (0,1) -- (1,1);
        \draw[->] (-0.1,0) -- (1.2,0) node[right] {$x$};
        \draw[->] (0,-0.1) -- (0,1.2) node[above] {$y$};
        \draw[blue,domain=0:1,line width=1pt] plot (\x,{(\x+0.4)/2});
        \draw[darkgreen,domain=6/85:1, line width=1pt] plot (\x,{0.15+0.06/(\x)});
        \fill (0.3, 0.35) circle[radius=0.8pt];
        \begin{scope}
          \fill[pattern=north east lines,pattern color=darkgreen, opacity=0.5, domain=6/85:1, variable=\x]
            (6/85,1) plot (\x,{0.15+0.06/(\x)}) -- plot (\x,{-0.85*\x + 1.06}) -- (1,0.21) -- cycle;
        \end{scope}
    \end{tikzpicture}
    \subcaption{\footnotesize $r \in (0, 1)$, $\conv(S_3)$ and $S_2$}
    \label{fig:tau0_point_subfigure_3}
\end{minipage}
\caption{When $r \in (0,1)$ and we convert to a hyperbola with $\tilde \tau \neq 0$ as in the right-hand side figure, the same proof can be applied as in the case~\ref{item:tau_notzero_line}.}
\label{fig:tau0_point}
\end{figure}

\section{Proof of Theorem~\ref{theorem:infinite_agg_needed}}
\label{sec:infinite_agg_needed_proof}
Consider the set:
\begin{eqnarray}
\label{eq:finite_agg_counterexample}
S = \left\{ x, y_1, y_2 \in [0, 1]^3\, \left | \,  \begin{array}{rcl} xy_1 &=& 0.5\\ xy_2 &= & 0.5 \end{array} \right. \right\}.
\end{eqnarray}

\begin{observation} Note that since $x \in [0.5, 1]$ for all $(x, y_1, y_2) \in S$, we have that $y_1 = y_2$ for all $(x, y_1, y_2) \in S$. Therefore, $\textup{conv}(S) \subseteq \{ x, y_1, y_2 \in [0, 1]^3\, | \, y_1 = y_2\}$.
\end{observation}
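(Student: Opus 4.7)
The plan is to verify the two numerical facts stated in the observation and then invoke a trivial convexity argument. First I would establish that $x \geq 0.5$ on $S$: given $(x,y_1,y_2) \in S$, the equality $xy_1 = 0.5$ together with the bound $y_1 \leq 1$ forces $x \geq 0.5$, while the box constraint gives $x \leq 1$, so $x \in [0.5, 1]$.

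Next I would use this strict positivity of $x$ to conclude $y_1 = y_2$. Since $x \geq 0.5 > 0$ on $S$, the two defining equalities $xy_1 = 0.5$ and $xy_2 = 0.5$ can each be divided by $x$, yielding $y_1 = 0.5/x = y_2$. Hence every point of $S$ lies in the hyperplane $H := \{(x,y_1,y_2) \in \mathbb{R}^3 \mid y_1 = y_2\}$.

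Finally, since $H$ is a (convex) affine hyperplane containing $S$, and the convex hull is the smallest convex set containing $S$, we obtain $\textup{conv}(S) \subseteq H \cap [0,1]^3$, which is exactly the claimed inclusion. There is no significant obstacle here: the entire argument is a one-line division by a strictly positive quantity followed by the observation that containment in a convex set is preserved under taking the convex hull.
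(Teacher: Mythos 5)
Your proposal is correct and follows exactly the reasoning the paper intends: the bound $y_1 \le 1$ together with $xy_1 = 0.5$ forces $x \ge 0.5 > 0$, division by $x$ gives $y_1 = y_2$, and containment of $S$ in the convex set $\{y_1 = y_2\} \cap [0,1]^3$ passes to the convex hull. No gaps; this matches the paper's (one-line) argument.
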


For $\lambda \in \mathbb{R}^2$, let 
$$S_{\lambda} := \left\{ x, y_1, y_2 \in [0, 1]^3\, \left | \,  \lambda_1(xy_1 - 0.5) + \lambda_2 (xy_2 - 0.5) = 0 \right. \right\}.$$
We first show that the intersection of a finite number of convex hulls of $S_\lambda$'s cannot yield the convex hull of $S$ and next show that the intersection of infinitely many convex hulls of $S_\lambda$'s yield the convex hull of $S$. Note first since $(1, 0.5, 0.5), (0.5, 1, 1) \in S$, we have that $\left(\frac{3}{4}, \frac{3}{4}, \frac{3}{4}\right) \in \textup{conv}(S)$.  Moreover, $\left(\frac{3}{4}, \frac{2}{3}, \frac{2}{3}\right) \in S$. Thus, $\left(\frac{3}{4}, \frac{17}{24}, \frac{17}{24} \right) \in \textup{conv}(S)$. Therefore, $\left(\frac{3}{4}, \frac{17}{24}, \frac{17}{24} \right) \in \textup{conv}(S_{\lambda})$ for all $\lambda \in \mathbb{R}^2$.

In order to prove this results, we will show that there exists an $\epsilon_0 >0$ such that 
\begin{eqnarray}\label{eq:0}
\left(\frac{3}{4}, \frac{17}{24} + \epsilon, \frac{17}{24} - \epsilon \right) \in \bigcap_{\lambda \in T} \textup{conv}(S_\lambda) \textup{ for all } 0 \leq \epsilon \leq \epsilon_0.
\end{eqnarray}

Therefore, it is sufficient to prove that for a given value of $\lambda \in \mathbb{R}^2$, there exist $\hat{\epsilon}(\lambda) > 0$, such that  
$$\left(\frac{3}{4}, \frac{17}{24} + \hat{\epsilon}(\lambda), \frac{17}{24} - \hat{\epsilon}(\lambda) \right) \in \textup{conv}(S_\lambda),$$
since we can then take a convex combination with the point $\left(\frac{3}{4}, \frac{17}{24}, \frac{17}{24} \right) \in \textup{conv}(S_{\lambda})$ to obtain points of the form in (\ref{eq:0}). Finally, taking the smallest (still positive) value of $\hat{\epsilon}(\lambda)$ from the finite set $\{\hat{\epsilon}(\lambda)\,|\, \lambda \in T\}$ and setting it to $\epsilon_0$ completes the proof. 

By re-scaling $\lambda$, we have the following two cases to consider:
\begin{enumerate}
\item $\lambda_1 = 0$:  In this case, note that $\left(\frac{3}{4}, \frac{3}{4}, \frac{2}{3} \right) \in S_{\lambda}$.
\item $\lambda_1 = 1$: For convenience, let $\theta := \lambda_2$. We consider the following sub-cases:
\begin{itemize}
\item $\theta \geq \frac{4}{3}$: In this case, note that it can be verified that
\begin{eqnarray}\label{eq:1}
\left(\frac{3}{4}, \frac{17}{24} + \epsilon, \frac{17}{24} - \epsilon \right), 
\end{eqnarray}
satisfies the equation $xy_1 - 0.5 + \theta (xy_2 - 0.5) = 0$, where $\epsilon = \frac{1}{24}\cdot \frac{\theta + 1}{\theta -1}$. Clearly, $\epsilon > 0$. Note that, equivalently, we have $\theta = \frac{24\epsilon + 1}{24\epsilon -1}$. Since $\theta \geq \frac{4}{3}$ we obtain that $\epsilon \leq \frac{7}{24}$ which guarantees that $0 \leq \frac{17}{24} + \epsilon \leq 1$ and $0 \leq \frac{17}{24} - \epsilon \leq 1$. Thus, (\ref{eq:1}) belongs to $S_{\lambda}$.
\item $-1 \leq \theta  < 4/3$: In this case, note that it can be verified that
\begin{eqnarray}\label{eq:2}
\left(\hat{x}(\theta), 1, \frac{10}{24} \right) \equiv \left(\hat{x}(\theta), \frac{17}{24} + \frac{7}{24}, \frac{17}{24} - \frac{7}{24} \right),
\end{eqnarray}
satisfies the equation $xy_1 - 0.5 + \theta (xy_2 - 0.5) = 0$, where $\hat{x}(\theta) = \frac{6\theta + 6}{12 + 5\theta}.$ Note that $0 \leq \hat{x}(\theta) < \frac{3}{4}$ for all $-1 \leq \theta  < \frac{4}{3}$. We now take a convex combination between (\ref{eq:2}) and the point 
\begin{eqnarray}
\frac{5}{12}\cdot\left(\frac{1}{2}, 1,1\right) + \frac{7}{12}\cdot\left(1, \frac{1}{2}, \frac{1}{2} \right)= \left(\frac{19}{24}, \frac{17}{24}, \frac{17}{24} \right) \in \textup{conv}(S) \subseteq \textup{conv}(S_{\lambda}).
\end{eqnarray}
Noting that $\frac{19}{24} > \frac{3}{4} > \hat{x}(\theta)$, we obtain that we can always find a convex combination (with the multiplier to (\ref{eq:2}) being positive) which is of the form $\left(\frac{3}{4}, \frac{17}{24} + \epsilon, \frac{17}{24} - \epsilon \right)$ with $\epsilon= \frac{12+5\theta}{288-84\theta}$. Since $-1 \leq \theta  < 4/3$, $\frac{7}{24\cdot19} < \epsilon \leq \frac{7}{24}$.
\item $\theta < -1$: In this case, we again consider the point (\ref{eq:1}).  Note that $\epsilon = \frac{1}{24}\cdot \frac{\theta + 1}{\theta -1}$. Clearly, $\epsilon > 0$ for $\theta < -1$. Moreover,  $\frac{1}{24}\cdot \frac{\theta + 1}{\theta -1} < \frac{1}{24}$ for all $\theta < -1$. This guarantees that $0 \leq \frac{17}{24} + \epsilon \leq 1$ and $0 \leq \frac{17}{24} - \epsilon \leq 1$. Thus, (\ref{eq:1}) belongs to $S_{\lambda}$.
\end{itemize}
\end{enumerate}

Next, we show that infinite aggregations yield a convex hull. First note that, since $\textup{conv}(S) \subseteq \bigcap_{\lambda \in \mathbb{R}^2} \textup{conv}(S_\lambda)$, we want to show $\bigcap_{\lambda \in \mathbb{R}^2} \textup{conv}(S_\lambda) \subseteq \conv(S)$. We show the result in two parts.  
First, we show that:
\begin{eqnarray}\label{eq:part1}  \bigcap_{\lambda \in \mathbb{R}^2} \conv(S_\lambda) \subseteq 
    \left\{ x, y_1, y_2 \in [0, 1]^3\, \left | \,  
    \begin{array}{l} 
    x (y_1 + y_2) \geq 1 \\
    x + y_1 \leq 1.5 \\
    y_1 = y_2 
    \end{array} 
    \right. \right\},
\end{eqnarray}
and then we show that:
\begin{eqnarray}\label{eq:part2}
    \left\{ x, y_1, y_2 \in [0, 1]^3\, \left | \,  
    \begin{array}{l} 
    x (y_1 + y_2) \geq 1 \\
    x + y_1 \leq 1.5 \\
    y_1 = y_2 
    \end{array} 
    \right. \right\} = \conv(S).
\end{eqnarray}

We begin by proving (\ref{eq:part1}) in three steps.
\begin{claim}
    \begin{align*}
    \bigcap_{\lambda \in \mathbb{R}^2} \conv(S_\lambda) \subseteq \left\{ x, y_1, y_2 \in [0, 1]^3 \, \left | \,  x(y_1 + y_2) \geq 1 \right.\right\}.
\end{align*}
\end{claim}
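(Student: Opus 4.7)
The plan is to exhibit a single aggregation weight that already forces the desired inequality, namely $\lambda = (1,1)$. Since the left-hand side $\bigcap_{\lambda \in \mathbb{R}^2}\conv(S_\lambda)$ is contained in $\conv(S_{(1,1)})$ by definition, it suffices to prove the stronger containment $\conv(S_{(1,1)}) \subseteq \{(x,y_1,y_2) \in [0,1]^3 : x(y_1+y_2) \geq 1\}$. With this $\lambda$, the aggregated equation becomes $xy_1 - 0.5 + xy_2 - 0.5 = 0$, so
\[
S_{(1,1)} = \{(x,y_1,y_2) \in [0,1]^3 : x(y_1+y_2) = 1\}.
\]

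The next step is the key geometric observation: introduce the linear change of coordinates $u := y_1 + y_2$ and project $S_{(1,1)}$ into the $(x,u)$-plane. The projection is contained in the curve $\{(x,u) : xu = 1\}$, and the box constraints $y_1,y_2 \in [0,1]$ force $u \in [0,2]$, which together with $xu = 1$ and $x \in [0,1]$ yields $x \in [1/2, 1]$ and $u \in [1,2]$. On the strictly positive quadrant, the super-level set $\{(x,u) \in \mathbb{R}_{>0}^2 : xu \geq 1\}$ is convex, since it coincides with the epigraph-type set $\{u \geq 1/x, x>0\}$ of the convex function $x \mapsto 1/x$.

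Thus $S_{(1,1)}$ lies in the convex set $\{(x,y_1,y_2) \in [0,1]^3 : x(y_1+y_2) \geq 1\}$, and convex hulls are the smallest convex supersets, so $\conv(S_{(1,1)})$ lies in the same convex set. Concretely, for any convex combination $(x,y_1,y_2) = \sum_i \mu_i (x^i, y_1^i, y_2^i)$ of points in $S_{(1,1)}$, Cauchy--Schwarz (with weights $\mu_i$, applied to $\sqrt{x^i}$ and $\sqrt{y_1^i+y_2^i}$) gives
\[
x(y_1+y_2) = \Bigl(\sum_i \mu_i x^i\Bigr)\Bigl(\sum_i \mu_i (y_1^i+y_2^i)\Bigr) \geq \Bigl(\sum_i \mu_i \sqrt{x^i(y_1^i+y_2^i)}\Bigr)^2 = 1,
\]
which is exactly the inequality we want.

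I do not anticipate a real obstacle here: the only mild subtlety is to notice that $x > 0$ and $y_1+y_2 > 0$ throughout (so we stay in the region where $\{xu \geq 1\}$ is convex), which is automatic because $x \geq 1/2$ on $S_{(1,1)}$. The proof is therefore a single-aggregation argument followed by a short convexity-of-$1/x$ (or equivalently Cauchy--Schwarz) step.
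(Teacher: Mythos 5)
Your proposal is correct and follows essentially the same route as the paper: both reduce to the single aggregation $\lambda=(1,1)$, note that $S_{(1,1)}=\{(x,y_1,y_2)\in[0,1]^3 : x(y_1+y_2)=1\}$ sits inside the superlevel set $\{x(y_1+y_2)\ge 1\}$, and conclude by observing that this superlevel set is convex on the relevant domain. Your extra justifications of that convexity (via convexity of $1/x$, or equivalently the Cauchy--Schwarz computation) merely spell out what the paper asserts directly.
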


First, note that:
\begin{align*}
    S_{(1,1)} = \left\{ x, y_1, y_2 \in [0, 1]^3 \, \left | \,  x(y_1 + y_2) = 1 \right.\right\} \subseteq \left\{ x, y_1, y_2 \in [0, 1]^3 \, \left | \,  x(y_1 + y_2) \geq 1 \right.\right\},
\end{align*}
and $\left\{ x, y_1, y_2 \in [0, 1]^3 \, \left | \,  x(y_1 + y_2) \geq 1 \right.\right\}$ is a convex set on the domain of $x, y_1, y_2 \in [0, 1]^3$. Therefore, it follows:
\begin{align*}
    \bigcap_{\lambda \in \mathbb{R}^2} \conv(S_\lambda) \subseteq \conv(S_{(1,1)}) \subseteq \left\{ x, y_1, y_2 \in [0, 1]^3 \, \left | \,  x(y_1 + y_2) \geq 1 \right.\right\},
\end{align*}
where the second inclusion is due to the convexity of $\left\{ x, y_1, y_2 \in [0, 1]^3 \, \left | \,  x(y_1 + y_2) \geq 1 \right.\right\}$.
\begin{claim}
\begin{align*}
    \bigcap_{\lambda \in \mathbb{R}^2} \conv(S_\lambda) \subseteq \left\{x, y_1, y_2 \in [0, 1]^3 \, \left | \, y_1 = y_2 \right. \right\}
\end{align*}
\end{claim}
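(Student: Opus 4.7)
I would begin by computing $\conv(S_{(1,-1)})$ explicitly. Since $S_{(1,-1)} = \{(x,y_1,y_2)\in [0,1]^3 : x(y_1-y_2) = 0\}$ is the union of the two planar faces $F_1 = \{x = 0\}\cap [0,1]^3$ and $F_2 = \{y_1 = y_2\}\cap [0,1]^3$ of the unit cube, I claim
\[ \conv(S_{(1,-1)}) = \{(x,y_1,y_2) \in [0,1]^3 : x + |y_1 - y_2| \leq 1\}. \]
The $\subseteq$ direction follows because $x + |y_1-y_2|\leq 1$ is convex (as a sublevel set of a convex function) and holds on both $F_1$ and $F_2$. For the reverse, assuming WLOG $y_1\geq y_2$, I would exhibit the explicit convex combination
\[ (x, y_1, y_2) = (y_1 - y_2)\cdot(0,1,0) + (1-y_1+y_2)\cdot \Bigl(\tfrac{x}{1-y_1+y_2},\, \tfrac{y_2}{1-y_1+y_2},\, \tfrac{y_2}{1-y_1+y_2}\Bigr), \]
whose first summand lies in $F_1$, and whose second summand lies in $F_2\cap [0,1]^3$ (using $x\leq 1-y_1+y_2$ and $y_2\leq 1-y_1+y_2$). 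This yields the valid inequality $x^* + |y_1^* - y_2^*| \leq 1$ for any $(x^*,y_1^*,y_2^*) \in \bigcap_\lambda \conv(S_\lambda)$.

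Next, I would assume for contradiction that $y_1^*\neq y_2^*$, WLOG $y_1^*>y_2^*$, and set $d := y_1^*-y_2^* > 0$, so that $x^*\leq 1-d$. Combining with the previous Claim 1 of this section ($x(y_1+y_2)\geq 1$) gives $(1-d)(y_1^*+y_2^*)\geq 1$, while the single-row aggregations $\lambda=(1,0)$ and $\lambda=(0,1)$ additionally supply $x^*y_1^*, x^*y_2^*\geq 1/2$. These together impose strong restrictions on $(x^*,y_1^*,y_2^*)$ in terms of $d$; however, they are not yet sufficient to force $d=0$ (indeed, they are satisfied by the family of points $(3/4, 17/24+\epsilon, 17/24-\epsilon)$ used in the first half of this section for $\epsilon\leq 1/24$).

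The main step, and also the main obstacle, is to close the remaining gap by invoking the full family of aggregations $\lambda = (1,\theta)$ for $\theta\in\mathbb{R}\setminus\{-1\}$. Using the disjunctive one-row convex hull construction of~\cite{dey2019new, santana2020convex}, each $\conv(S_{(1,\theta)})$ yields further valid inequalities beyond those already used. I would argue that for any candidate point with $d > 0$, there exists a value of $\theta$ (in particular, $\theta$ approaching $-1$ from below, which is precisely the regime where $\hat{\epsilon}(\lambda)$ was smallest in the first half of this section) for which the resulting valid inequality is strictly violated at $(x^*,y_1^*,y_2^*)$. The technical difficulty is the explicit computation of these convex hulls for $\theta$ near the degenerate value $\theta=-1$: the bilinear surface $S_{(1,\theta)}$ becomes nearly reducible, and one must carefully track the disjunctive fixing of $x,y_1,y_2$ at their bounds in order to extract the sharp inequality that tightens $x+|y_1-y_2|\leq 1$ to $y_1=y_2$ in the limit.
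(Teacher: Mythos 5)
Your proposal has a genuine gap at exactly the point you flag as ``the main step.'' The preliminary material is fine: your description of $\conv(S_{(1,-1)})$ as $\{(x,y_1,y_2)\in[0,1]^3 : x+|y_1-y_2|\le 1\}$ is correct and the convex-combination certificate checks out, and you are right that this inequality together with $x(y_1+y_2)\ge 1$ and $xy_i\ge 1/2$ cannot force $y_1=y_2$ (the points $(3/4,\,17/24+\epsilon,\,17/24-\epsilon)$ witness this). But the argument that is supposed to finish the proof --- that for every point with $d>0$ some $\conv(S_{(1,\theta)})$ with $\theta$ near $-1$ excludes it --- is only asserted, not carried out, and the route you propose for it (computing the full disjunctive one-row convex hulls of the nearly reducible surfaces $S_{(1,\theta)}$ and extracting a sharp facet in the limit) is substantially harder than what is actually needed. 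As written, the proof does not establish the claim.

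The paper closes this gap with a much lighter observation: you do not need $\conv(S_{(1,\theta)})$ at all, only one valid \emph{linear} inequality for $S_{(1,\theta)}$. For $\theta\neq -1$ (in your parametrization), no point of $S_{(1,\theta)}$ has $x=0$, so on $S_{(1,\theta)}$ one may divide the constraint by $x$ to get $y_1+\theta y_2 = 0.5(1+\theta)/x$; since $0<x\le 1$, the right-hand side is at least $0.5(1+\theta)$ when $1+\theta>0$ and at most $0.5(1+\theta)$ when $1+\theta<0$. This yields the half-space $(y_1-0.5)+\theta(y_2-0.5)\ge 0$ (resp.\ $\le 0$), which, being convex, contains $\conv(S_{(1,\theta)})$. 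Intersecting these half-spaces over all $\theta>-1$ and all $\theta<-1$ pinches to $y_1=y_2$ (together with $y_1,y_2\ge 0.5$), since for any point with $y_1\neq y_2$ one can choose $\theta$ sufficiently close to $-1$ on the appropriate side to violate the corresponding half-space. If you replace your ``main step'' with this one-line derivation of a valid linear inequality per aggregation, your proof goes through; the explicit computation of $\conv(S_{(1,-1)})$ then becomes unnecessary.
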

\begin{proof}
Let $\lambda_1 = 1$ and $\lambda_2 = - \theta$:
\begin{align*}
    S_\lambda & = \left\{x, y_1, y_2 \in [0, 1]^3 \, \left | \, (xy_1 - 0.5) - \theta (xy_2 - 0.5) = 0\right. \right\}.
\end{align*}
If $x = 0$ and $(x,y_1, y_2) \in S_{\lambda}$,  then we must have that $\theta = 1$. In other words, if $\theta \neq 1$, then $0 \notin \textup{proj}_x(S_{\lambda})$.
Therefore, for $\theta \neq 1$, the bilinear constraint defining $S_{\lambda}$ can be reformulated as:
\begin{align*}
    y_1 - \theta y_2 = \frac{0.5(1-\theta)}{x}.
\end{align*}
We now consider two cases:
\begin{enumerate}
    \item $\theta < 1$: In this case note that $y_1 - \theta y_2 = 0.5 (1- \theta) / x \geq 0.5(1-\theta)$ and
    \begin{align*}
        S_\lambda \subseteq H_{\lambda}^+ := \left\{x, y_1, y_2 \in [0, 1]^3 \, \left | \, (y_1 - 0.5) - \theta (y_2 - 0.5) \geq 0\right. \right\}.
    \end{align*}
    \item $\theta > 1$: In this case note that $y_1 - \theta y_2 = 0.5 (1- \theta) / x \leq 0.5(1-\theta)$ and
    \begin{align*}
        S_\lambda \subseteq H_\lambda^- := \left\{x, y_1, y_2 \in [0, 1]^3 \, \left | \, (y_1 - 0.5) - \theta (y_2 - 0.5) \leq 0\right. \right\}.
    \end{align*}
\end{enumerate}
Since $H_\lambda^+$ and $H_\lambda^-$ are both convex, $\conv(S_\lambda) \subseteq H_\lambda^+$ for $\theta < 1$ and $\conv(S_\lambda) \subseteq H_\lambda^-$ for $\theta > 1$. Then we have:
\begin{align*}
    \bigcap_{\lambda \in \mathbb{R}^2} \conv(S_\lambda) 
    & \subseteq \left( \bigcap_{\theta < 1} H_\lambda^+ \right) \bigcap \left( \bigcap_{\theta > 1} H_\lambda^- \right) \\
    & = \left\{x, y_1, y_2 \in [0, 1]^3 \, \left | \, y_1 = y_2, \; y_1, y_2 \geq 0.5 \right. \right\}.
\end{align*}
Figure~\ref{fig:thetaconverge} illustrates the proof.
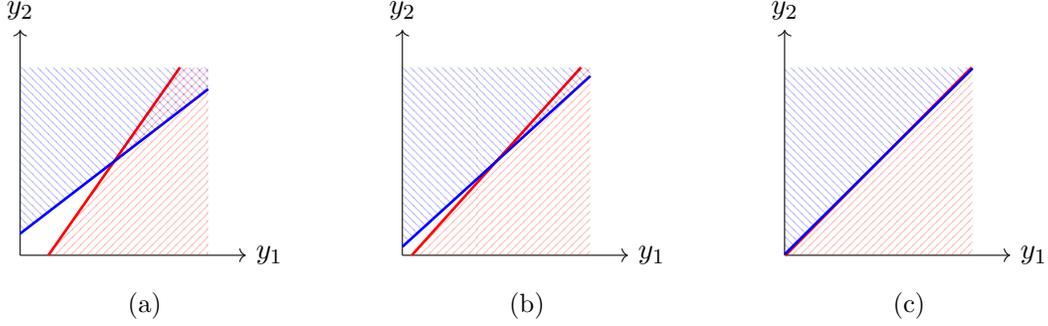
\begin{figure}[tbh!]
    \centering
    \begin{minipage}{0.3\textwidth}
        \centering
        \begin{tikzpicture}[scale=2.5]
        \draw[->] (0,0) -- (1.2,0) node[right] {$y_1$};
        \draw[->] (0,0) -- (0,1.2) node[above] {$y_2$};
        
        \draw[red,domain=0.15:0.85, line width=1pt] plot (\x,{(\x-0.15)/0.7});
        \draw[blue,domain=0:1, line width=1pt] plot (\x,{(\x+0.15)/1.3});

        \fill[pattern=north east lines,pattern color=red, opacity=0.5] (0.15, 0) -- (0.85, 1) -- (1,1) -- (1, 0) -- cycle;
        \fill[pattern=north west lines,pattern color=blue, opacity=0.5] (0, 15/130) -- (0, 1) -- (1,1) -- (1, 115/130) -- cycle;
\end{tikzpicture}
        \subcaption{}
    \end{minipage}
    \begin{minipage}{0.3\textwidth}
        \centering
        \begin{tikzpicture}[scale=2.5]
        \draw[->] (0,0) -- (1.2,0) node[right] {$y_1$};
        \draw[->] (0,0) -- (0,1.2) node[above] {$y_2$};
        
        \draw[red,domain=0.05:0.95, line width=1pt] plot (\x,{(\x-0.05)/0.9});
        \draw[blue,domain=0:1, line width=1pt] plot (\x,{(\x+0.05)/1.1});

        \fill[pattern=north east lines,pattern color=red, opacity=0.5] (0.05, 0) -- (0.95, 1) -- (1,1) -- (1, 0) -- cycle;
        \fill[pattern=north west lines,pattern color=blue, opacity=0.5] (0, 5/110) -- (0, 1) -- (1,1) -- (1, 105/110) -- cycle;
\end{tikzpicture}
        \subcaption{}
    \end{minipage}
    \begin{minipage}{0.3\textwidth}
        \centering
        \begin{tikzpicture}[scale=2.5]
        \draw[->] (0,0) -- (1.2,0) node[right] {$y_1$};
        \draw[->] (0,0) -- (0,1.2) node[above] {$y_2$};
        
        \draw[red,domain=0.005:0.995, line width=1pt] plot (\x,{(\x-0.005)/0.99});
        \draw[blue,domain=0:1, line width=1pt] plot (\x,{(\x+0.005)/1.01});

        \fill[pattern=north east lines,pattern color=red, opacity=0.5] (0.005, 0) -- (0.995, 1) -- (1,1) -- (1, 0) -- cycle;
        \fill[pattern=north west lines,pattern color=blue, opacity=0.5] (0, 5/1010) -- (0, 1) -- (1,1) -- (1, 1005/1010) -- cycle;
\end{tikzpicture}
        \subcaption{}
    \end{minipage}
    \caption{The intersection becomes a line segment as $\theta \rightarrow 1$ and $\theta \leftarrow 1$.}
    \label{fig:thetaconverge}
    \label{fig:infinite_intersection}
\end{figure}
\end{proof}

The next claim completes the proof of (\ref{eq:part1}).
\begin{claim}
\begin{align*}
    \left( \bigcap_{\lambda \in \mathbb{R}^2} \conv(S_\lambda) \right)  \subseteq \left\{ x, y_1, y_2 \in [0, 1]\, \left | \, x + y_1 \leq 1.5 \right. \right\}
\end{align*}
\end{claim}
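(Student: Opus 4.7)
The plan is to establish the containment by instantiating the aggregation closure with a single carefully chosen weight, namely $\lambda=(1,0)$, and showing that the linear inequality $x+y_1 \leq 1.5$ is already valid on the entire set $S_{(1,0)}$. Since $\lambda=(1,0) \in \mathbb{R}^2$, we automatically have
\begin{align*}
    \bigcap_{\lambda \in \mathbb{R}^2} \conv(S_\lambda) \subseteq \conv(S_{(1,0)}),
\end{align*}
so it suffices to show $\conv(S_{(1,0)}) \subseteq \{x+y_1 \leq 1.5\}$.

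For this chosen $\lambda$, the aggregated constraint reduces to the single bilinear equation involving only $x$ and $y_1$:
\begin{align*}
    S_{(1,0)} = \{(x,y_1,y_2) \in [0,1]^3 \,|\, xy_1 = 0.5\}.
\end{align*}
I would first observe that feasibility together with the box constraints forces $x,y_1 \in [0.5,1]$, and then consider the single-variable function $f(x) = x + 0.5/x$ on this interval. A direct calculation gives $f(0.5)=f(1)=1.5$, while convexity of $f$ (equivalently, convexity of the hyperbola arc in the $(x,y_1)$-plane) guarantees $f(x) \leq 1.5$ on $[0.5,1]$. Geometrically, this is the familiar fact that the chord from $(0.5,1)$ to $(1,0.5)$ of the hyperbola $xy_1=0.5$ lies above the hyperbola itself.

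Since $x+y_1 \leq 1.5$ is a linear inequality that holds on every point of $S_{(1,0)}$, it automatically holds on $\conv(S_{(1,0)})$, completing the proof. The only step requiring any real thought is the choice of $\lambda$: the temptation is to try a genuine mixture of the two bilinear constraints, but because the inequality $x+y_1 \leq 1.5$ involves only the variables of the first constraint, the degenerate aggregation $\lambda=(1,0)$ is already sufficient. Once this observation is made, everything else reduces to the one-dimensional estimate $x + 0.5/x \leq 1.5$ on $[0.5,1]$.
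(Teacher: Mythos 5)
Your proof is correct and follows essentially the same route as the paper: both arguments reduce to the single aggregation $\lambda=(1,0)$ and the validity of $x+y_1\leq 1.5$ on $S_{(1,0)}$. The only cosmetic difference is that the paper quotes the full description $\conv(S_{(1,0)})=\{xy_1\geq 0.5,\ x+y_1\leq 1.5\}\cap[0,1]^3$ from the one-row convexification literature, whereas you verify the linear inequality directly via the elementary estimate $x+0.5/x\leq 1.5$ on $[0.5,1]$.
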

\begin{proof}
Observe that 
$$\textup{conv}({S_{(1,0)}}) = \{ x, y_1, y_2 \in [0, 1]^3\,|\,xy_1 \geq 0.5, x + y_1 \leq 1.5\} \subseteq \left\{ x, y_1, y_2 \in [0, 1]\, \left | \, x + y_1 \leq 1.5 \right. \right\}.$$ 
Thus, we obtain the result.
\end{proof}
Next, we verify (\ref{eq:part2}). Let $T \subseteq \mathbb{R}^3$ be the set in the left-hand-side of (\ref{eq:part2}). Note first that verifying $T \supseteq \textup{conv}(S)$ is straightforward as the $T$ is convex and contains $S$.

To prove $T\subseteq \textup{conv}(S)$, we show that any point in the $T$ can be written as the convex combination of two points in $S$. In particular, let $(\hat x, \hat y_1, \hat y_2) \in T$. Then, $\hat x \geq 0.5$. If $\hat x = 0.5$ or $\hat{y}_1 = 1$, then the constraint $\hat x (\hat y_1 + \hat y_2) \geq 1$ with the box constraints on $y_1,y_2 \in [0,1]^2$ implies that 
$(\hat x, \hat y_1, \hat y_2) = (0.5, 1, 1) \in S$. For $\hat x > 0.5$ and $\hat{y}_1 < 1$, we verify that
\begin{eqnarray}\label{eq:convTS}
    (\hat x, \hat y_1, \hat y_2) = \lambda (0.5, 1, 1) + (1- \lambda)(\tilde x, \tilde y_1, \tilde y_2), 
\end{eqnarray} 
where
\begin{align*}
    & \left( \tilde x, \tilde y_1, \tilde y_2 \right) = \left( \frac{\hat x-0.5}{1- \hat y_1}, \frac{1- \hat y_1}{2 \hat x-1},\frac{1- \hat y_1}{2 \hat x-1} \right), \quad \lambda = \frac{2\hat x \hat y_1 - 1}{2 \hat x + \hat y_1 -2}.
\end{align*}
We first verify that $(\tilde x, \tilde y_1, \tilde y_2) \in S$.
\begin{itemize}
    \item It is straightforward that $\tilde x \tilde y_1 = 0.5$ and $\tilde x \tilde y_2 = 0.5$.
    \item $\tilde x, \tilde y_1, \tilde y_2 \geq 0$ because $\hat{x} > 0.5$ and $\hat{y}_1 < 1$.
    \item $\tilde x \leq 1$ because $\hat x + \hat y_1 \leq 1.5$ which implies $\hat x - 0.5 \leq 1 - \hat y_1$.
    \item $\tilde{y}_1, \tilde{y}_2 \leq 1$ is equivalent to $2\hat{x} + \hat{y}_1 \geq 2.$ This is true because $\textup{min}\{2\hat{x} + \hat{y}_1\,|\, \hat{x} \hat{y}_1 \geq 0.5,\ \hat{x}, \hat{y}_1 \in [0, 1] \}  =2,$ where the constraint $\hat{x}\hat{y}_1 \geq 0.5$ is implied by the constraints $\hat{x} (\hat{y}_1 + \hat{y}_2) \geq 1$ and $\hat{y}_1 = \hat{y}_2$ defining $T$.
\end{itemize}
Also, since $(\hat x, \hat y_1, \hat y_2) \in T$ implies that $2\hat{x}\hat{y}_1\geq 1$ and $(2\hat{x}\hat{y}_1  -1)(1 - \hat{y}) \geq 1$, we have that $0 \leq \lambda \leq 1$. 
it remains to verify (\ref{eq:convTS}):
\begin{itemize}
    \item 
    $     
    \begin{aligned}[t]
        0.5\lambda + (1 - \lambda) \tilde x & = \frac{\hat x \hat y_1 -0.5}{2\hat x + \hat y_1 - 2} + \frac{2\hat x + \hat y_1 - 2 \hat x \hat y_1 - 1}{2 \hat x + \hat y_1 - 2} \cdot \frac{\hat x - 0.5}{1 - \hat y_1} \\
        & = \frac{\hat x \hat y_1 -0.5}{2\hat x + \hat y_1 - 2} + \frac{(2\hat x - 1)(1 - \hat y_1)}{2 \hat x + \hat y_1 - 2} \cdot \frac{\hat x - 0.5}{1 - \hat y_1} \\
        & = \frac{\hat x \hat y_1 -0.5 + 2(\hat x - 0.5)^2}{2\hat x + \hat y_1 - 2} = \frac{2 \hat x^2 + \hat x \hat y_1 - 2 \hat x}{2\hat x + \hat y_1 - 2} = \hat x
    \end{aligned}  
    $
    \item 
    $     
    \begin{aligned}[t]
        \lambda + (1-\lambda) \tilde y_1 & = \frac{2\hat x \hat y_1 - 1}{2 \hat x + \hat y_1 -2} + \frac{2\hat x + \hat y_1 - 2 \hat x \hat y_1 - 1}{2 \hat x + \hat y_1 - 2} \cdot \frac{1 - \hat y_1}{2\hat x - 1} \\
        & = \frac{2\hat x \hat y_1 - 1 + (1-\hat y_1)^2}{2 \hat x + \hat y_1 - 2} = \frac{2\hat x \hat y + \hat y_1^2 - 2 \hat y_1}{2 \hat x + \hat y_1 - 2} = \hat y_1
    \end{aligned}  
    $
    \item 
    $     
    \begin{aligned}[t]
        \lambda + (1-\lambda) \tilde y_2 = \lambda + (1-\lambda) \tilde y_1 = \hat y_1 = \hat y_2
    \end{aligned}  
    $
\end{itemize}
This completes the proof that infinite aggregations are needed for certain sets with $n_1+n_2 \geq 3$.

\section{Proof of Theorem~\ref{theorem:agg_no_work}}
\label{sec:agg_no_work_proof}

Consider the set:
\begin{eqnarray*}
S = \left\{ x, y_1, y_2 \in [0, 1]^3\, \left | \,  
\begin{array}{lcl} 
-2 x y_1 + 9 x y_2 + y_1 -5 y_2 &=& 0 \\ 
5 x y_1 + 3 y_1 + 3 y_2 &= & 6 
\end{array} \right. \right\}.
\end{eqnarray*}
For $\lambda \in \mathbb{R}^2$, let
\begin{align*}
    S_\lambda := \{x, y_1, y_2 \in [0, 1] \left | \ \lambda_1 (-2 x y_1 + 9 x y_2 + y_1 -5 y_2 ) + \lambda_2 (5 x y_1 + 3 y_1 + 3 y_2 - 6) = 0 \right \}.
\end{align*}

We consider two cases: (i) $\lambda_1 = 0$ and $\lambda_2 = 1$; and (ii) $\lambda_1 = 1$ and $\theta := \lambda_2$. For both cases, we show that $\hat p = (\hat x, \hat y_1, \hat y_2) = \left(\frac{7}{10}, \frac{7}{8}, \frac{1}{6}\right) \in \conv(S_\lambda)$ but $\hat p \notin \conv(S)$.
First of all, $\hat p \notin S$, because we can find the following separating hyperplane $H$ that separates $S$ and $\hat p$:
\begin{align*}
    H := \left\{ x, y_1, y_2 \in [0, 1] \; | \; -2 x + 10 y_1 - 10 y_2 = 5 \right\}.
\end{align*}
We also define two half spaces:
\begin{align*}
    & H^{>} := \left\{ x, y_1, y_2 \in [0, 1] \; | \; -2 x + 10 y_1 - 10 y_2 > 5 \right\}, \\
    & H^{<} := \left\{ x, y_1, y_2 \in [0, 1] \; | \; -2 x + 10 y_1 - 10 y_2 < 5 \right\}.
\end{align*}
Since $- 2 \cdot \frac{7}{10} + 10 \cdot \frac{7}{8} - 10 \cdot \frac{1}{6} = \frac{341}{60} > 5$, $\hat p \in H^{>}$. Next we want to show that for all $p \in S$, $p \in H^{<}$, so that $\conv(S) \in H^{<}$.
Note that we can write $y_1$ and $y_2$ in terms of $x$ by solving two systems of equations:
\begin{align*}
    y_1 = \frac{54 x - 30}{45 x^2 + 8x - 18}, \quad y_2 = \frac{12 x - 6}{45 x^2 + 8x - 18}
\end{align*}
Let the denominator function be $f(x) = 45 x^2 + 8x - 18$. Then, 
$f(x) = 0$ at $\underline{x} = \frac{-4 - \sqrt{826}}{45} \approx -0.7276$ and $\bar x = \frac{\sqrt{826}-4}{45} \approx 0.5498$. 
For any $x \in (\underline{x}, \bar x)$, $f(x) < 0$ and $f(x) > 0$ if $x < \underline{x}$ or $x > \bar x$.
For boundary constraints of $y_1, y_2 \in [0,1]$ to be satisfied, 
we refer to the graph of $y_1$ and $y_2$ in Figure~\ref{fig:counterexample_y1y2_graphs}.
From this, we note that the only interval of $x \in [0,1]$ such that both $y_1, y_2 \in [0,1]$ is when $x \geq \frac{2 + 4\sqrt{34}}{45} \approx 0.5628$.
\begin{figure}[tbh!]
\centering
    \begin{minipage}{0.45\textwidth}
    \centering
    \begin{tikzpicture}[scale=3]
        \draw[dashed] (1,0) -- (1,1);
        \draw[dashed] (0,1) -- (1,1);
        \draw[->] (-0.1,0) -- (1.2,0) node[right] {$x$};
        \draw[->] (0,-0.1) -- (0,1.1) node[left] {$y_1$};
        \draw[black,domain=5/9:1,line width=1pt] plot (\x,{(54*\x - 30)/(-18 + 8*\x + 45*(\x)^2});
    \end{tikzpicture}   
    \subcaption{$y_1$}
    \label{fig:counterexample_y1}
    \end{minipage}
    \begin{minipage}{0.45\textwidth}
    \centering
    \begin{tikzpicture}[scale=3]
        \draw[dashed] (1,0) -- (1,1);
        \draw[dashed] (0,1) -- (1,1);
        \draw[->] (-0.1,0) -- (1.2,0) node[right] {$x$};
        \draw[->] (0,-0.1) -- (0,1.1) node[left] {$y_2$};
        \draw[black,domain=0:1/2,line width=1pt] plot (\x,{(12*\x - 6)/(-18 + 8*\x + 45*(\x)^2});
        \draw[black,domain=0.562751:1,line width=1pt] plot (\x,{(12*\x - 6)/(-18 + 8*\x + 45*(\x)^2});
    \end{tikzpicture}   
    \subcaption{$y_2$}
    \label{fig:counterexample_y2}
    \end{minipage}
    \caption{For $y_1$, the boundary constraint of $y_1 \in [0,1]$ is satisfied when $x \geq \frac{5}{9}$ whereas for $y_2$, the boundary constraint is satisfied when either $x \leq \frac{1}{2}$ or $x \geq \frac{2 + 4\sqrt{34}}{45}$. These conditions together require $x \geq \frac{2 + 4\sqrt{34}}{45}$.}
    \label{fig:counterexample_y1y2_graphs}
\end{figure}
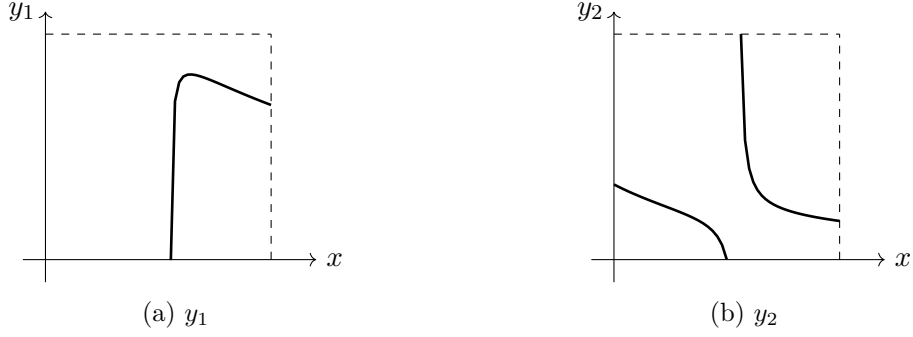
Writing the hyperplane equation in terms of $x$ gives:
\begin{align*}
    -2 x + 10 y_1 - 10 y_2 = \frac{240 - 456 x + 16 x^2 + 90 x^3}{18 - 8 x - 45 x^2} = g(x)
\end{align*}
Taking the derivative of $g(x)$ and setting $g'(x)=0$ gives the places of local minimum and local maximum. Among $x$ values that take a local minimum or local maximum, we only consider values where $x \in \left[\frac{2 + 4\sqrt{34}}{45}, 1\right]$. The only value is the local maximum at $x \approx 0.704$ which gives $g(x) \approx 4.197 < 5$. 
At points $x = \frac{2 + 4\sqrt{34}}{45}$ and $x = 1$, $g(x) \approx -5.9653$ and $g(x) \approx 3.1429$ respectively which are both less than $5$.

Now, consider the following cases to show that $\hat p$ can be obtained from a convex combination of points in $S_\lambda$ for any choice of $\lambda$. In other words, there exists $p_i \in S_\lambda$ and $w_i \in [0,1]$ for $i=1,...,k$ with $k \leq 4$ such that: (i) $\sum w_i = 1$; and (ii) $\sum w_i p_i = \hat p = (\hat x, \hat y_1, \hat y_2) = (\frac{7}{10}, \frac{7}{8}, \frac{1}{6}$).
\begin{enumerate}
    \item $\lambda_1 = 0$ and $\lambda_2 = 1$:
    
    Let $p_1 = \left(1, \frac{1}{2}, \frac{2}{3} \right) $, $p_2 = \left(\frac{3}{5}, 1, 0 \right)$, $w_1 = \frac{1}{4}$ and $w_2 = \frac{3}{4}$.
    \begin{enumerate}
        \item $\sum w_i = \frac{1}{4} + \frac{3}{4} = 1$. 
        \item $\sum w_i p_i = \frac{1}{4} \cdot \left(1, \frac{1}{2}, \frac{2}{3} \right) + \frac{3}{4} \cdot \left(\frac{3}{5}, 1, 0 \right) = \left( \frac{1}{4} + \frac{9}{20}, \frac{1}{8} + \frac{3}{4}, \frac{1}{6}\right) = \left(\frac{7}{10}, \frac{7}{8}, \frac{1}{6}\right)$.
    \end{enumerate}
    \item $\lambda_1 = 1$ and $\lambda_2 = \theta \leq - \frac{5}{3}$: 
    
    Let $
    \begin{aligned}[t]
        & p_1 = \left(0, \tfrac{3\theta+5}{3\theta+1}, 1\right), \; 
        p_2 = \left(1, \tfrac{6\theta}{8\theta-1},0\right), \;
        p_3 = \left(1, \tfrac{3\theta-4}{8\theta-1},1\right), \;
        p_3 = \left(\tfrac{3\theta-1}{5\theta-2},1,0\right), \; \text{with} \\
        & w_1 = \tfrac{47(1+3\theta)}{120(1+47 \theta)}, \;
        w_2 = \tfrac{122+1343\theta-1645\theta^2}{120(1+47 \theta)(1-2\theta)}, \;
        w_3 = \tfrac{799\theta-27}{120(1+47 \theta)}, \;
        w_4 = \tfrac{-11(1-141\theta)(2-5\theta)}{120(1+47 \theta)(1-2\theta)}.
    \end{aligned}
    $
    \begin{enumerate}
        \item $
        \begin{aligned}[t]
        & \sum w_i 
        = (w_1+ w_3) + (w_2 + w_4)
        = \tfrac{940 \theta +20}{120 (1+47\theta)} + \tfrac{100 + 4500 \theta - 9400 \theta^2}{120(1+47 \theta)(1-2\theta)}
        = \tfrac{1}{6} + \tfrac{5}{6} = 1.
        \end{aligned}
        $
        \item 
        $
        \begin{aligned}[t]
            \hat x & = w_2 + w_3 + \tfrac{3\theta-1}{5\theta-2} \cdot w_4\\
            & = \tfrac{122+1343\theta-1645\theta^2}{120(1+47 \theta)(1-2\theta)} + \tfrac{799\theta-27}{120(1+47 \theta)} + \tfrac{3\theta-1}{5\theta-2} \cdot \tfrac{-22+3157 \theta - 7755 \theta^2}{120(1+47 \theta)(1-2\theta)} \\
            & = \tfrac{122+1343\theta-1645\theta^2}{120(1+47 \theta)(1-2\theta)} + \tfrac{-27 + 853 \theta - 1598 \theta^2}{120(1+47 \theta)(1-2\theta)} + \tfrac{-11 + 1584 \theta - 4653 \theta^2}{120(1+47 \theta)(1-2\theta)} \\
            & = \tfrac{84 + 3780 \theta - 7896 \theta^2}{120(1+47 \theta)(1-2\theta)} = \tfrac{84(1+47 \theta)(1-2\theta)}{120(1+47 \theta)(1-2\theta)} = \tfrac{84}{120} = \tfrac{7}{10}.
        \end{aligned}
        $
        \item $
        \begin{aligned}[t]
            \hat y_1 & = \tfrac{3\theta+5}{3\theta+1} \cdot w_1 + \tfrac{6\theta}{8\theta-1} \cdot w_2 + \tfrac{3\theta-4}{8\theta-1} \cdot w_3 + w_4 \\
            & = \tfrac{-78960 \theta^3 + 47670 \theta^2 - 3885 \theta - 105}{120(1+47 \theta)(1-2\theta)(8\theta-1)} =\tfrac{105(1+47\theta)(1-2\theta)(8\theta-1)}{120(1+47 \theta)(1-2\theta)(8\theta-1)} = \tfrac{105}{120} = \tfrac{7}{8}.
        \end{aligned}
        $
        \item $
        \begin{aligned}[t]
            \hat y_2 & = w_1 + w_3 = \tfrac{940\theta+20}{120(1+47\theta)} = \tfrac{20}{120} = \tfrac{1}{6}.
        \end{aligned}
        $
    \end{enumerate}
    \item $\lambda_1 = 1$ $\lambda_2 = \theta \in \left[ -\tfrac{5}{3}, - \tfrac{3}{5} \right]$: 
    
    Let $
    \begin{aligned}[t]
        & p_1 = \left(\tfrac{3\theta+5}{9}, 0, 1\right), \;
          p_2 = \left(1, \tfrac{6\theta}{8\theta-1},0\right),  \;
          p_3 = \left(1, \tfrac{3\theta-4}{8\theta-1},1\right), \;
          p_4 = \left(\tfrac{3\theta-1}{5\theta-2},1,0\right), \; \text{with}\\
        & w_1 = \tfrac{141}{40 (3 \theta - 4) (5 \theta - 11)}, \;
          w_2 = \tfrac{402-595\theta+100\theta^2}{120(2 \theta - 1) (5 \theta - 11)}, \;
          w_3 = \tfrac{457-1060\theta+300\theta^2}{120(3 \theta - 4) (5 \theta - 11)}, \;
          w_4 = \tfrac{(5 \theta - 2) (180 \theta - 349)}{120 (2 \theta - 1) (5 \theta - 11)}.
    \end{aligned}
    $
    \begin{enumerate}
        \item
        $     
        \begin{aligned}[t]
            \sum w_i & = w_1 + w_3 + w_2 + w_4 \\
            & =  \tfrac{3 \cdot 141 + 457-1060\theta+300\theta^2}{120(44-53\theta+15\theta^2)} + \tfrac{402-595\theta+100\theta^2 + 698 - 2105 \theta + 900 \theta^2}{120 (11 - 27 \theta + 10 \theta^2)} \\
            & =  \tfrac{880 -1060\theta+300\theta^2}{120(44-53\theta+15\theta^2)} + \tfrac{1100 -2700 \theta + 1000 \theta^2}{120 (11 - 27 \theta + 10 \theta^2)} = \tfrac{1}{6} + \tfrac{5}{6} = 1
        \end{aligned}  
        $
        \item 
        $     
        \begin{aligned}[t]
            \hat x & = \tfrac{3 \theta + 5}{9} \cdot w_1 + w_2 + w_3 + \tfrac{3 \theta - 1}{5 \theta - 2} \cdot w_4 \\
            & = \tfrac{141 \theta + 235}{120 (3 \theta - 4)(5 \theta - 11)} + \tfrac{402 - 595 \theta + 100 \theta^2}{120 (2 \theta - 1)(5 \theta - 11)} + \tfrac{457-1060\theta+300\theta^2}{120(3 \theta - 4) (5 \theta - 11)} + \tfrac{(3\theta-1)(180\theta-349)}{120(2\theta-1)(5\theta-11)} \\
            & = \tfrac{692 - 919 \theta + 300 \theta^2}{120 (3 \theta - 4)(5 \theta - 11)} + \tfrac{640\theta^2 - 1822\theta + 751}{120 (2 \theta - 1)(5 \theta - 11)} \\
            & = \tfrac{(3\theta - 4) (100\theta - 173)}{120 (3 \theta - 4)(5 \theta - 11)} +\tfrac{(2\theta - 1) (320\theta - 751)}{120 (2 \theta - 1)(5 \theta - 11)} = \tfrac{420\theta -924}{120(5 \theta - 11)} = \tfrac{84(5\theta-11)}{120(5\theta-11)} = \tfrac{7}{10}
        \end{aligned}  
        $
        \item 
        $     
        \begin{aligned}[t]
            \hat y_1 & = \tfrac{6\theta}{8\theta-1} w_2 + \tfrac{3\theta-4}{8\theta-1} w_3 + w_4 \\
            & = \tfrac{6\theta (402 - 595\theta + 100\theta^2)}{120(8\theta-1)(2\theta-1)(5\theta-11)} + \tfrac{457 - 1060\theta + 300\theta^2}{120(8\theta-1)(5\theta-11)} + \tfrac{698-2105\theta+900\theta^2}{120(2\theta-1)(5\theta-11)}\\
            & = \tfrac{600\theta^3 - 3570\theta^2 + 2412\theta}{120(8\theta-1)(2\theta-1)(5\theta-11)} + \tfrac{600\theta^3 - 2420\theta^2 + 1974\theta - 457}{120(8\theta-1)(2\theta-1)(5\theta-11)} + \tfrac{7200\theta^3 - 17740\theta^2 + 7689\theta - 698}{120(8\theta-1)(2\theta-1)(5\theta-11)} \\
            & = \tfrac{-1155 + 12075\theta - 23730\theta^2 + 8400\theta^3 }{120(8\theta-1)(2\theta-1)(5\theta-11)} = \tfrac{105(8\theta-1)(2\theta-1)(5\theta-11)}{120(8\theta-1)(2\theta-1)(5\theta-11)} = \tfrac{7}{8}
        \end{aligned}  
        $
        \item 
        $     
        \begin{aligned}[t]
            \hat y_2 = w_1 + w_3 = \tfrac{3 \cdot 141 + 457-1060\theta+300\theta^2}{120(3\theta-4)(5\theta-11)} = \tfrac{880 - 1060\theta + 300\theta^2}{120(3\theta-4)(5\theta-11)} = \tfrac{20(3\theta-4)(5\theta-11)}{120(3\theta-4)(5\theta-11)} = \tfrac{1}{6}
        \end{aligned}  
        $
    \end{enumerate}
    \item $\lambda_1 = 1$ $\lambda_2 = \theta \in \left[ -\tfrac{3}{5}, - \tfrac{211}{665} \right]$:

    Let
    $
    \begin{aligned}[t]
        & p_1 = \left(\tfrac{3\theta+5}{9}, 0, 1\right), \;
          p_2 = \left(1, \tfrac{6\theta}{8\theta-1},0\right), \;
          p_3 = \left(\tfrac{4}{5\theta+7}, 1,1\right), \;
          p_4 = \left(\tfrac{3\theta-1}{5\theta-2},1,0\right), \; \text{with} \\
        & w_1 = \tfrac{-3(211+665\theta)}{40(65-556\theta-160\theta^2+75\theta^3)}, \;
          w_2 = \tfrac{(-1+8\theta)(958-785\theta-800\theta^2+375\theta^3)}{40(-1+2\theta)(65-556\theta-160\theta^2+75\theta^3)}, \\
        & w_3 = \tfrac{(7 + 5\theta) (457 - 1060\theta + 300\theta^2)}{120(65-556\theta-160\theta^2+75\theta^3)}, \;
          w_4 = \tfrac{(-2 + 5\theta) (1813 - 17094\theta - 3355\theta^2 + 1200\theta^3)}{120 (-1 + 2\theta) (65 - 556\theta - 160\theta^2 + 75\theta^3)}.
    \end{aligned}  
    $
    \begin{enumerate}
        \item 
        $     
        \begin{aligned}[t]
            \sum w_i & = \tfrac{1899 + 2187\theta - 11970\theta^2}{120 (-1 + 2\theta) (65 - 556\theta - 160\theta^2 + 75\theta^3)} + \tfrac{-2874 + 25347\theta - 16440\theta^2 - 20325\theta^3 + 9000\theta^4}{120 (-1 + 2\theta) (65 - 556\theta - 160\theta^2 + 75\theta^3)}  \\
            & + \tfrac{-3199 + 11533\theta - 7070\theta^2 - 7900\theta^3 + 3000\theta^4}{120 (-1 + 2\theta) (65 - 556\theta - 160\theta^2 + 75\theta^3)} + \tfrac{ - 3626 + 43253\theta - 78760\theta^2 - 19175\theta^3 + 6000\theta^4}{120 (-1 + 2\theta) (65 - 556\theta - 160\theta^2 + 75\theta^3)} \\
            & = \tfrac{-7800 + 82320\theta - 114240\theta^2 - 47400\theta^3 + 18000\theta^4}{120 (-1 + 2\theta) (65 - 556\theta - 160\theta^2 + 75\theta^3)} \\
            & = \tfrac{120 (-1 + 2\theta) (65 - 556\theta - 160\theta^2 + 75\theta^3)}{120 (-1 + 2\theta) (65 - 556\theta - 160\theta^2 + 75\theta^3)} = 1
        \end{aligned}  
        $
        \item 
        $     
        \begin{aligned}[t]
            \hat x & = \tfrac{3\theta+5}{9} w_1 + w_2 + \tfrac{4}{5\theta+7} w_3 + \tfrac{3\theta-1}{5\theta-2} w_4 \\
            & = \tfrac{-1055 - 3958\theta - 1995\theta^2}{120(65 - 556\theta - 160\theta^2 + 75\theta^3)} + \tfrac{-2874 + 25347\theta - 16440\theta^2 - 20325\theta^3 + 9000\theta^4}{120 (-1 + 2\theta) (65 - 556\theta - 160\theta^2 + 75\theta^3)} \\
            & \quad + \tfrac{1828 - 4240\theta + 1200\theta^2}{120(65-556\theta-160\theta^2+75\theta^3)} + \tfrac{-1813 + 22533\theta - 47927\theta^2 - 11265\theta^3 + 3600\theta^4}{120(-1+2\theta)(65-556\theta-160\theta^2+75\theta^3)} \\
            & = \tfrac{773 - 8198\theta - 795\theta^2}{120(65-556\theta-160\theta^2+75\theta^3)} + \tfrac{-4687 + 47880\theta - 64367\theta^2 - 31590\theta^3 + 12600\theta^4}{120(-1+2\theta)(65-556\theta-160\theta^2+75\theta^3)} \\
            & = \tfrac{773 - 8198\theta - 795\theta^2}{120(65-556\theta-160\theta^2+75\theta^3)} + \tfrac{(-1 + 2\theta) (4687 - 38506\theta - 12645\theta^2 + 6300\theta^3)}{120(-1+2\theta)(65-556\theta-160\theta^2+75\theta^3)} \\
            & = \tfrac{5460 - 46704\theta - 13440\theta^2 + 6300\theta^3}{120(65-556\theta-160\theta^2+75\theta^3)} = \tfrac{84(65-556\theta-160\theta^2+75\theta^3)}{120(65-556\theta-160\theta^2+75\theta^3)} = \tfrac{7}{10}
        \end{aligned}  
        $
        \item 
        $     
        \begin{aligned}[t]
            \hat y_1 & = \tfrac{6\theta}{8\theta-1} w_2 + w_3 + w_4 \\
            & = \tfrac{17244\theta - 14130\theta^2 - 14400\theta^3 + 6750\theta^4}{120(-1+2\theta)(65-556\theta-160\theta^2+75\theta^3)} + \tfrac{-3199 + 11533\theta - 7070\theta^2 - 7900\theta^3 + 3000\theta^4}{120(-1+2\theta)(65-556\theta-160\theta^2+75\theta^3)} \\
            & \quad + \tfrac{-3626 + 43253\theta - 78760\theta^2 - 19175\theta^3 + 6000\theta^4}{120(-1+2\theta)(65-556\theta-160\theta^2+75\theta^3)} \\
            & = \tfrac{-6825 + 72030\theta - 99960\theta^2 - 41475\theta^3 + 15750\theta^4}{120(-1+2\theta)(65-556\theta-160\theta^2+75\theta^3)} \\
            & = \tfrac{105(-1+2\theta)(65-556\theta-160\theta^2+75\theta^3)}{120(-1+2\theta)(65-556\theta-160\theta^2+75\theta^3)} = \tfrac{7}{8}
        \end{aligned}  
        $
        \item 
        $     
        \begin{aligned}[t]
            \hat y_2 &= w_1 + w_3 \\
            & = \tfrac{-1899 - 5985\theta}{120(65-556\theta-160\theta^2+75\theta^3)} + \tfrac{3199 - 5135\theta - 3200\theta^2 + 1500\theta^3}{120(65-556\theta-160\theta^2+75\theta^3)} \\
            & = \tfrac{1300 - 11120\theta - 3200\theta^2 + 1500\theta^3}{120(65-556\theta-160\theta^2+75\theta^3)} = \tfrac{20(65-556\theta-160\theta^2+75\theta^3)}{120(65-556\theta-160\theta^2+75\theta^3)} = \tfrac{1}{6}
        \end{aligned}  
        $
    \end{enumerate}
    \item $\lambda_1 = 1$ and $\lambda_2 = \theta \in \left[ - \tfrac{211}{665}, \tfrac{1}{3}\right] \cup \left[ \tfrac{863+\sqrt{7682449}}{4110}, \infty \right]$:  

    Let
    $     
    \begin{aligned}[t]
        & p_1 = \left(\tfrac{4}{5\theta+7}, 1,1\right), \;
          p_2 = \left(1, \tfrac{844+863 \theta-2055 \theta^2}{1249+728 \theta-3405 \theta^2}, \tfrac{-(-1+2\theta)(211+665\theta)}{1249+728 \theta-3405 \theta^2}\right), \;
          p_3 = \left(\tfrac{3\theta-1}{5\theta-2},1,0\right), \; \text{with} \\
        & w_1 = \tfrac{47(7+5 \theta)}{1080(3+5 \theta)}, \;
          w_2 = \tfrac{-1249-728 \theta+3405 \theta^2}{1080(-1+2\theta)(3+5\theta)}, \;
          w_3 = \tfrac{277(-2+5\theta)}{1080(-1+2\theta)}.
    \end{aligned}  $
    \begin{enumerate}
        \item 
        $     
        \begin{aligned}[t]
            \sum w_i & = \tfrac{47(7+5 \theta)}{1080(3+5 \theta)} + \tfrac{-1249-728 \theta+3405 \theta^2}{1080(-1+2\theta)(3+5\theta)} + \tfrac{277(-2+5\theta)}{1080(-1+2\theta)} \\
            & = \tfrac{-329+423\theta+470\theta^2}{1080(-1+2\theta)(3+5\theta)} + \tfrac{-1249-728 \theta+3405 \theta^2}{1080(-1+2\theta)(3+5\theta)} + \tfrac{-1662+1385\theta+6925\theta^2}{1080(-1+2\theta)(3+5\theta)} \\
            & = \tfrac{-3240+1080\theta+10800\theta^2}{1080(-1+2\theta)(3+5\theta)} = \tfrac{1080(-1+2\theta)(3+5\theta)}{1080(-1+2\theta)(3+5\theta)} = 1
        \end{aligned}  
        $
        \item 
        $     
        \begin{aligned}[t]
            \hat x & = \tfrac{4}{5\theta+7} w_1 + w_2 + \tfrac{3\theta-1}{5\theta-2} w_3 \\
            & = \tfrac{188}{1080(3+5\theta)} + \tfrac{-1249-728 \theta+3405 \theta^2}{1080(-1+2\theta)(3+5\theta)} + \tfrac{-277+831\theta}{1080(-1+2\theta)} \\
            & = \tfrac{-188+387\theta}{1080(-1+2\theta)(3+5\theta)} + \tfrac{-1249-728 \theta+3405 \theta^2}{1080(-1+2\theta)(3+5\theta)} + \tfrac{-831+1108 \theta+4155 \theta^2}{1080(-1+2\theta)(3+5\theta)} \\
            & = \tfrac{-2268+756\theta+7560\theta^2}{1080(-1+2\theta)(3+5\theta)} = \tfrac{756(-1+2\theta)(3+5\theta)}{1080(-1+2\theta)(3+5\theta)} = \tfrac{7}{10}
        \end{aligned}  
        $ 
        \item 
        $     
        \begin{aligned}[t]
            \hat y_1 & = w_1 + \tfrac{844+863 \theta-2055 \theta^2}{1249+728 \theta-3405 \theta^2} w_2 + w_3 \\
            & = \tfrac{47(7+5 \theta)}{1080(3+5 \theta)} + \tfrac{-844-863\theta+2055\theta^2}{1080(-1+2\theta)(3+5\theta)} + \tfrac{277(-2+5\theta)}{1080(-1+2\theta)} \\
            & = \tfrac{-329 + 423\theta + 470\theta^2}{1080(-1+2\theta)(3+5\theta)} + \tfrac{-844-863\theta+2055\theta^2}{1080(-1+2\theta)(3+5\theta)} + \tfrac{-1662 + 1385\theta + 6925\theta^2}{1080(-1+2\theta)(3+5\theta)} \\
            & = \tfrac{-2835 + 945\theta + 9450\theta^2}{1080(-1+2\theta)(3+5\theta)} = \tfrac{945(-1+2\theta)(3+5\theta)}{1080(-1+2\theta)(3+5\theta)} = \tfrac{7}{8}
        \end{aligned}  
        $
        \item 
        $     
        \begin{aligned}[t]
            \hat y_2 & = w_1 + \tfrac{-(-1+2\theta)(211+665\theta)}{1249+728 \theta-3405 \theta^2} w_2 = \tfrac{47(7+5 \theta)}{1080(3+5 \theta)} + \tfrac{211+665\theta}{1080(3+5\theta)} = \tfrac{540+900\theta}{1080(3+5\theta)} = \tfrac{180(3+5\theta)}{1080(3+t\theta)} = \tfrac{1}{6}
        \end{aligned}  
        $
    \end{enumerate}
    \item $\lambda_1 = 1$ and $\lambda_2 = \theta \in \left[ \tfrac{1}{3}, \tfrac{863+\sqrt{7682449}}{4110}\right]$:  

    Let
    $     
    \begin{aligned}[t]
        & p_1 = \left(\tfrac{4}{5\theta+7}, 1,1\right), \;
          p_2 = \left(1, 0, \tfrac{6\theta}{3\theta+4} \right), \;
          p_3 = \left(\tfrac{2(976+1137\theta)}{45(68+69\theta)},1,\tfrac{844+863 \theta-2055 \theta^2}{27(4+3\theta)(21+115\theta)}\right), \;\text{with}\\
        & w_1 = \tfrac{(7+5\theta)(-412+1635\theta)}{16(712+6629\theta+5685\theta^2)}, \;
          w_2 = \tfrac{1}{8}, \;
          w_3 = \tfrac{9(68+69\theta)(21+115\theta)}{16(712+6629\theta+5685\theta^2)}.
    \end{aligned}  $
    \begin{enumerate}
        \item 
        $     
        \begin{aligned}[t]
            \sum w_i & = \tfrac{(7+5\theta)(-412+1635\theta)}{16(712+6629\theta+5685\theta^2)} + \tfrac{1}{8} + \tfrac{9(68+69\theta)(21+115\theta)}{16(712+6629\theta+5685\theta^2)} \\
            & = \tfrac{-2884+9385\theta+8175\theta^2}{16(712+6629\theta+5685\theta^2)} + \tfrac{1424+13258\theta+11370\theta^2}{16(712+6629\theta+5685\theta^2)} + \tfrac{12852+83421\theta+71415\theta^2}{16(712+6629\theta+5685\theta^2)} \\
            & = \tfrac{11392+106064\theta+90960\theta^2}{16(712+6629\theta+5685\theta^2)} = \tfrac{16(712+6629\theta+5685\theta^2)}{16(712+6629\theta+5685\theta^2)} = 1
        \end{aligned}  
        $
        \item 
        $     
        \begin{aligned}[t]
            \hat x & = \tfrac{4}{5\theta+7} w_1 + w_2 + \tfrac{2(976+1137\theta)}{45(68+69\theta)}w_3 \\
            & = \tfrac{4(-412+1635\theta)}{16(712+6629\theta+5685\theta^2)} + \tfrac{1}{8} + \tfrac{2(976+1137\theta)(21+115\theta)}{80(712+6629\theta+5685\theta^2)} \\
            & = \tfrac{-8240 + 32700\theta}{80(712+6629\theta+5685\theta^2)} + \tfrac{7120 + 66290\theta + 56850\theta^2}{80(712+6629\theta+5685\theta^2)} + \tfrac{40992 + 272234\theta + 261510\theta^2}{80(712+6629\theta+5685\theta^2)} \\
            & = \tfrac{39872+371224\theta+318360\theta^2}{80(712+6629\theta+5685\theta^2)} = \tfrac{56(712+6629\theta+5685\theta^2)}{80(712+6629\theta+5685\theta^2)} = \tfrac{7}{10}   
        \end{aligned}  
        $ 
        \item 
        $     
        \begin{aligned}[t]
            & \hat y_1= \tfrac{(7+5\theta)(-412+1635\theta)}{16(712+6629\theta+5685\theta^2)} + \tfrac{9(68+69\theta)(21+115\theta)}{16(712+6629\theta+5685\theta^2)} \\
            & = \tfrac{-2884+9385\theta+8175\theta^2}{16(712+6629\theta+5685\theta^2)} + \tfrac{12852+83421\theta+71415\theta^2}{16(712+6629\theta+5685\theta^2)} \\
            & = \tfrac{9968+92806\theta+79590\theta^2}{16(712+6629\theta+5685\theta^2)} = \tfrac{14(712+6629\theta+5685\theta^2)}{16(712+6629\theta+5685\theta^2)} = \tfrac{7}{8}
        \end{aligned}  
        $
        \item
        $     
        \begin{aligned}[t]
            \hat y_2 & = w_1 + \tfrac{6\theta}{3\theta+4} w_2 + \tfrac{844+863 \theta-2055 \theta^2}{27(4+3\theta)(21+115\theta)} w_3 \\
            & = \tfrac{(7+5\theta)(-412+1635\theta)}{16(712+6629\theta+5685\theta^2)} + \tfrac{6\theta}{8(3\theta+4)} + \tfrac{(68+69\theta)(844+863 \theta-2055 \theta^2)}{48(4+3\theta)(712+6629\theta+5685\theta^2)} \\
            & = \tfrac{-34608 + 86664\theta + 182565\theta^2 + 73575\theta^3}{48(4+3\theta)(712+6629\theta+5685\theta^2)} + \tfrac{25632\theta - 238644\theta^2 + 204588\theta^3}{48(4+3\theta)(712+6629\theta+5685\theta^2)} \\
            & \quad + \tfrac{57392 + 116920\theta - 80193\theta^2 - 141795\theta^3}{48(4+3\theta)(712+6629\theta+5685\theta^2)} \\
            & = \tfrac{22784 + 229216\theta + 341016\theta^2 + 136440\theta^3}{48(4+3\theta)(712+6629\theta+5685\theta^2)} = \tfrac{8(4+3\theta)(712+6629\theta+5685\theta^2)}{48(4+3\theta)(712+6629\theta+5685\theta^2)} = \tfrac{1}{6}
        \end{aligned}  
        $
    \end{enumerate}
\end{enumerate}
The proof shows that for any choice of $\lambda = (\lambda_1, \lambda_2) \in \mathbb{R}^2$, $\hat p \in \conv(S_\lambda)$. Hence, $\conv(S)  \subsetneq \bigcap_{\lambda \in \mathbb{R}^2} \conv(S_\lambda)$. This counterexample serves as a counterexample for any $n_1 + n_2 > 2$ as we can extend the variable space.

\section*{Acknowledgement}
We would like to thank Trent E. Schreiber for many useful discussions on understanding the FEM update problem and providing the instances. The authors would like to gratefully acknowledge the support of grant number 2211343 from the NSF CMMI.

\bibliographystyle{plain}
\bibliography{reference}

\newpage

\appendix
\section{Bound Reduction}
\label{app:bound_reduction}
We begin by first solving the original nonconvex problem for 60 seconds using BARON, which gives us an objective value that can be used as a cut on $\delta$.
Next, we build the one-row relaxation. Then, for each variable, we maximize and minimize that particular variable over the one-row relaxation together with the objective cut. 
We then update the one-row relaxation using the tighter bounds. This process is applied iteratively 5 times and results in an average bound reduction are reported in Tables~\ref{tab:bound_reduction_12story} and \ref{tab:bound_reduction_16story}. We note that the bound reduction process significantly reduces the range of $y$ variables overall. For $x$ variables, the geometric mean is substantially smaller than the average (arithmetic mean) reflecting that bounds on most of $x$ variables remain unchanged while bounds on a few $x$ variables are reduced significantly. It is interesting to see how even with such small $y$ variable bounds, this optimization problem remains challenging to solve to optimality.

\begin{table}[tbh!]
\centering
\caption{Bound reduction in percentage (\%) for 12-story data}
\label{tab:bound_reduction_12story}
\begin{tabular}{lrrrrr}
\toprule
 & \multicolumn{2}{c}{$x$ variables} &  & \multicolumn{2}{c}{$y$ variables} \\
\cmidrule{2-3} \cmidrule{5-6}
Instance & Average & Geo mean$^{\dagger}$ &  & Average & Geo mean \\
\midrule
1 & 63.51 & 58.15 &  & 99.29 & 99.29 \\
2 & 43.36 & 16.17 &  & 98.90 & 98.90 \\
3 & 33.98 & 0.45 &  & 98.70 & 98.68 \\
4 & 43.79 & 32.60 &  & 98.86 & 98.85 \\
5 & 26.70 & 0.77 &  & 98.36 & 98.34 \\
6 & 25.63 & 1.08 &  & 98.37 & 98.36 \\
7 & 31.38 & 5.84 &  & 98.58 & 98.57 \\
8 & 30.33 & 0.99 &  & 98.60 & 98.59 \\
9 & 38.75 & 7.40 &  & 98.89 & 98.88 \\
10 & 28.92 & 0.79 &  & 98.52 & 98.50 \\
\bottomrule
\end{tabular}
\\$^{\dagger}$\small{The geometric mean has been shifted by 0.001 to avoid data entry of 0.00.}\\
\end{table}

\begin{table}[tbh!]
\centering
\caption{Bound reduction in percentage (\%) for 16-story data}
\label{tab:bound_reduction_16story}
\begin{tabular}{lrrrrr}
\toprule
 & \multicolumn{2}{c}{$x$ variables} &  & \multicolumn{2}{c}{$y$ variables} \\
\cmidrule{2-3} \cmidrule{5-6}
Instance & Average & Geo mean$^{\dagger}$ &  & Average & Geo mean \\
\midrule
1 & 37.33 & 8.61 &  & 98.40 & 98.39 \\
2 & 58.27 & 31.59 &  & 98.93 & 98.93 \\
3 & 50.68 & 44.12 &  & 98.86 & 98.85 \\
4 & 22.16 & 0.08 &  & 97.81 & 97.79 \\
5 & 36.11 & 2.49 &  & 98.26 & 98.25 \\
6 & 20.56 & 0.10 &  & 97.12 & 97.10 \\
7 & 47.82 & 20.87 &  & 98.60 & 98.60 \\
8 & 26.57 & 0.91 &  & 98.29 & 98.27 \\
9 & 39.74 & 16.31 &  & 98.42 & 98.41 \\
10 & 35.10 & 2.22 &  & 97.98 & 97.97 \\
\bottomrule
\end{tabular}
\\$^{\dagger}$\small{The geometric mean has been shifted by 0.001 to avoid data entry of 0.00.}\\
\end{table}



\end{document}